\NeedsTeXFormat{LaTeX2e}

\documentclass[11pt, a4paper]{amsart}

\usepackage{amsmath, amsthm, amssymb}

\newcommand{\Res}{\operatorname{Res}}

\newcommand{\MinResLoc}{\operatorname{MinResLoc}}

\newcommand{\sP}{\mathsf{P}}
\newcommand{\bP}{\mathbb{P}}

\newcommand{\cS}{\mathcal{S}}
\newcommand{\GCD}{\mathrm{GCD}}
\newcommand{\GL}{\mathrm{GL}}
\newcommand{\Fix}{\operatorname{Fix}}
\newcommand{\FP}{\operatorname{FP}}
\newcommand{\FR}{\operatorname{FR}}
\newcommand{\can}{\operatorname{can}}
\newcommand{\ordRes}{\operatorname{ordRes}}
\newcommand{\Crucial}{\operatorname{Crucial}}

\newcommand{\rd}{\mathrm{d}}
\newcommand{\bR}{\mathbb{R}}

\newcommand{\ord}{\operatorname{ord}}
\newcommand{\bN}{\mathbb{N}}
\newcommand{\diam}{\operatorname{diam}}
\newcommand{\PGL}{\mathrm{PGL}}

\newcommand{\cO}{\mathcal{O}}

\newcommand{\sH}{\mathsf{H}}

\newcommand{\supp}{\operatorname{supp}}
\newcommand{\Id}{\mathrm{Id}}
\newcommand{\BC}{\operatorname{BC}}

\theoremstyle{plain}
\newtheorem{theorem}{Theorem}[section]
\newtheorem{lemma}[theorem]{Lemma}

\newtheorem{mainth}{Theorem}
\newtheorem{maincoro}{Corollary}
\theoremstyle{definition}
\newtheorem{definition}[theorem]{Definition}
\newtheorem{notation}[theorem]{Notation}
\newtheorem*{acknowledgement}{Acknowledgement}
\theoremstyle{remark}
\newtheorem{remark}[theorem]{Remark}

\numberwithin{equation}{section}

\begin{document} 

\title
[Rumely's weight function and crucial measure]
{Geometric formulas on Rumely's weight function and crucial measure
in non-archimedean dynamics}

\author
[Y\^usuke Okuyama]
{Y\^usuke Okuyama}

\address{
Division of Mathematics,
Kyoto Institute of Technology,
Sakyo-ku, Kyoto 606-8585 Japan.}
\email{okuyama@kit.ac.jp}

\date{\today}

\begin{abstract}
We introduce the $f$-crucial function $\Crucial_f$ associated to 
a rational function $f\in K(z)$ of degree $>1$ 
over an algebraically closed field $K$ of possibly positive characteristic
that is complete with respect to a non-trivial and
non-archimedean absolute value, and give a global and explicit expression of
Rumely's (resultant) function $\ordRes_f$ in terms of the hyperbolic metric $\rho$ 
on the Berkovich upper half space $\sH^1$ in the Berkovich projective line $\sP^1=\sP^1(K)$.
We also obtain geometric formulas for Rumely's weight function $w_f$ 
and crucial measure $\nu_f$ on $\sP^1$ associated to $f$, as well as 
improvements of Rumely's principal results.
As an application to dynamics, we obtain a quantitative equidistribution 
of the sequence $(\nu_{f^n})_n$ of $f^n$-crucial measures towards the $f$-equilibrium 
(or canonical) 
measure $\mu_f$ on $\sP^1$.
\end{abstract}

\subjclass[2010]
{Primary 37P50; Secondary 11S82, 37P05}

\keywords{crucial function,
weight function, crucial measure, 
minimal resultant locus, barycenter,
crucial set, crucial tree, 
non-archimedean dynamics}

\maketitle 

\section{Introduction}\label{sec:intro}

Let $K$ be an algebraically closed field of possibly positive characteristic
that is complete with respect to a non-trivial 
absolute value $|\cdot|$, which 
is
{\itshape non-archimedean} in that the strong triangle inequality 
$|z-w|\le\max\{|z|,|w|\}$ holds for any $z,w\in K$, and
let $k=\cO_K/\mathfrak{m}_K$ be the residue field of $K$,
where
\begin{gather*}
 \cO_K=\{z\in K:|z|\le 1\} 
\end{gather*}
is the ring of $K$-integers and $\mathfrak{m}_K$ is the maximal
ideal in $\cO_K$.
The {\itshape Berkovich} projective line $\sP^1=\sP^1(K)$ is a compact,
Hausdorff,
and uniquely arcwise connected topological space augmenting $\bP^1=\bP^1(K)$.  
As a set, $\sP^1$ is the set of all {\em multiplicative} seminorms {\em on}
$K(z)$ which restricts to $|\cdot|$ on $K$ and is allowed
to take $+\infty$ in an appropriate way (see \cite[\S3.4]{Jonsson15}), 
and a point $\cS$($=[\cdot]_{\cS}$ as a multiplicative seminorm on $K(z)$) 
in $\sP^1$
is of one and only one of types I, II, III, and IV. 
Any type I, II, or III point $\cS=[\cdot]_{\cS}$ in 
$\sP^1\setminus\{\infty\}$
is such a supremum seminorm $|\cdot|_{B_{\cS}}$
on $K[z]$ that $|\phi|_{B_{\cS}}=\sup_{z\in B_{\cS}}|\phi(z)|$, 
$\phi\in K[z]$, for a ($K$-closed) disk 
$B_{\cS}:=\{z\in K:|z-a|\le r\}$ 
in $K$, and setting $\diam\cS:=r$, $\cS$ is type I 
if and only if $\diam\cS=0$ (or equivalently $\cS\in K=\bP^1\setminus\{\infty\}$), and is of type II (resp.\ III) if and only if $\diam\cS\in|K^*|$ 
(resp.\ $\diam\cS\in\bR_{>0}\setminus|K^*|$). 
The point $\infty\in\bP^1$(=the evaluation norm 
$[\ell]_\infty=|\ell(\infty)|$, $\ell\in K(z)$, at $\infty$ on $K(z)$)
is type I, and 
the {\itshape canonical $($or Gauss$)$} point $\cS_{\can}$ in $\sP^1$
is type II and represented by $\cO_K$.

The tree structure on $\sP^1$ in the sense of Jonsson 
\cite[Definition 2.2]{Jonsson15} is induced by the
partial ordering of all those disks in $K$ by inclusions;
for any $\cS,\cS'\in\sP^1$, let $[\cS,\cS']$ (resp.\ $(\cS,\cS'],[\cS,\cS')$) be
the closed (resp.\ left half open, right half open) interval in $\sP^1$.
The Berkovich {\em upper half space} 
\begin{gather*}
 \sH^1:=\sP^1\setminus\bP^1
\end{gather*}
is equipped with the {\itshape hyperbolic $($or path distance$)$
metric} $\rho$, and this $\rho$ 
extends to the {\itshape generalized hyperbolic metric} $\tilde{\rho}$ on $\sP^1$
as a generalized metric on $\sP^1$
(see Subsections \ref{sec:tangent} and \ref{sec:laplacian}).
The set of all type II points is dense in the Berkovich {\em hyperbolic space}
$(\sH^1,\rho)$.

The action on $\bP^1$ of $h\in K(z)$ extends {\em continuously}
to that on $\sP^1$ so that $[\ell]_{h(\cS)}=[h^*\ell]_{\cS}$
for any $\cS\in\sP^1$ and any $\ell\in K(z)$. 
If $d_0:=\deg h>0$, then 
the action on $\sP^1$ of $h$ is surjective, open, and
finite, and preserves the type of each point in $\sP^1$. Furthermore,
the local degree function $a\mapsto\deg_a h$ on $\bP^1$ also canonically
extends to an upper semicontinuous
function $\sP^1\to\{1,2,\ldots,d_0\}$ such that for every domain $V\subset\sP^1$
and every component $U$ of $h^{-1}(V)$, the function 
$\cS\mapsto\sum_{\cS'\in h^{-1}(\cS)\cap U}\deg_{\cS'}(h)$ is constant on $V$
(see Subsection \ref{sec:directional} including the induced pullback action $h^*$
of $h$ on the space of Radon measures on $\sP^1$).
The linear fractional transformation group
$\PGL(2,K)$ on $\sP^1$ acts transitively on all type II points, and
$\PGL(2,\cO_K)$ is the stabilizer of $\cS_{\can}$.

We fixed a projective coordinate system on $\bP^1$.
Let $\pi:K^2\setminus\{(0,0)\}\to\bP^1$ be the
canonical projection such that $\pi(0,1)=\infty$ and 
$\pi(p_0,p_1)=p_1/p_0$ if $p_0\neq 0$.
An ordered pair of homogeneous polynomials
$H=(H_0,H_1)\in(K[p_0,p_1]_{\deg h})^2$ is
a (non-degenerate homogeneous polynomial) {\itshape lift} of $h\in K(z)$ if
$\pi\circ H=h\circ\pi$
on $K^2\setminus\{(0,0)\}$ 
(and $\Res H:=\Res(H_0,H_1)\neq 0$), 
where $\Res(H_0,H_1)\in K$ is the homogeneous resultant of $(H_0,H_1)$
(see, e.g., \cite[\S2.4]{SilvermanDynamics}). A lift of $f$ is unique
up to multiplication in $K^*=K\setminus\{0\}$.

Following Kawaguchi--Silverman \cite[Definition 2]{KS09},
we say a lift $H$ of $h\in K(z)$ is {\itshape minimal} 
if 
$\max\{|\text{coefficients of }H_0\text{ or }H_1|\}=1$, or equivalently, if
\begin{gather*}
\sup_{\|(p_0,p_1)\|\le 1}\|H(p_0,p_1)\|(=\sup_{z\in\cO_K}\|H(1,z)\|)=1,
\end{gather*}
where $\|\cdot\|$
is the maximum norm on $K^2$. A minimal lift $H$ of $h$
is unique up to multiplication in $\{z\in K:|z|=1\}$, and 
$-\log|\Res H|\in\bR$ is $\ge 0$ and
is independent of the choice of a minimal lift $H$ of $h$.
In his investigation on algorithmically deciding
whether a given $f\in K(z)$ of degree $d>1$ 
{\itshape has a potentially good reduction} in that 
$f^{-1}(\cS')=\{\cS'\}$ 
for some (in fact unique type II) 
point $\cS'\in\sH^1$ (see also Definition \ref{th:reduction} below),
Rumely \cite{Rumely13} associated to $f$ a non-negative, continuous, piecewise affine,
and convex function
$\ordRes_f$ on $(\sH^1,\rho)$ such that for every $h\in\PGL(2,K)$,
\begin{gather*}
 \ordRes_f\bigl(h(\cS_{\can})\bigr)
 =-\log|\Res(\text{a minimal lift of }h^{-1}\circ f\circ h)|,
\end{gather*}
setting also $\ordRes_f:=+\infty$ on $\bP^1$ by convention; 
$f^{-1}(\cS')=\{\cS'\}$ for some $\cS'\in\sH^1$ if and only if
$\min_{\sP^1}\ordRes_f=0$. However, the $\ordRes_f$
was not explicitly expressed on $\sH^1$ except for type II points.
In his subsequent investigation of the $f$-{\itshape minimal resultant locus}
\begin{gather*}
 \MinResLoc_f:=(\ordRes_f)^{-1}\Bigl(\min_{\sP^1}\ordRes_f\Bigr)
\end{gather*}
(i.e., the minimum locus of $\ordRes_f$)
using also Faber's works \cite{Faber13topologyI,Faber13topologyII},
Rumely \cite[Definitions 8 and 9]{Rumely14} 
also associated to $f$ a weight function 
\begin{gather*}
 w_f:\sP^1\to\bN\cup\{0\}
\end{gather*}
(see Subsection \ref{sec:defining} for more details)
and showed that the $f$-{\itshape crucial set} $\sP^1\setminus w_f^{-1}(0)$
consists of {\itshape at most} $d-1$ type II points
and contains {\itshape all} type II fixed points $\cS'$ of $f$ 
satisfying $\deg_{\cS'}(f)>1$, 
that the $f$-{\itshape crucial measure} $\nu_f$ on $\sP^1$ defined as
\begin{gather*}
\nu_f:=
\frac{\sum_{\cS\in\sP^1\setminus w_f^{-1}(0)}w_f(\cS)\cdot\delta_{\cS}}{d-1} 
\end{gather*}
is a {\em probability} Radon measure on $\sP^1$, and that
$\MinResLoc_f$ is a possibly trivial closed interval
in the $f$-{\itshape crucial tree}
$\Gamma_f:=\Gamma_{\supp(\nu_f)}$ spanned by the $f$-crucial set 
$\supp(\nu_f)=\sP^1\setminus w_f^{-1}(0)$ in $\sP^1$ 
(see also Corollary \ref{th:crucialtree} below).
Here $\delta_{\cS}$ is the Dirac measure on $\sP^1$ at a point $\cS\in\sP^1$,
and we adopt the convention that $\bN=\{1,2,\ldots\}$.

Our aim is to contribute to the study of both $\MinResLoc_f$ and 
$\Gamma_f$,
not only by giving a {\em global} and {\em explicit} geometric expression
of $\ordRes_f$ in terms of the geometry of $(\sH^1,\rho)$
but also by giving a geometric formulas for $w_f$ and $\nu_f$. 
We will refine some of principal results in
Rumely \cite{Rumely13,Rumely14} (and the published version \cite{Rumely17}
of \cite{Rumely14}), and also give
a dynamical application of our geometric studies, refining Jacobs
\cite[Theorem 5]{Jacobs17}.

\subsection{The $f$-crucial function $\Crucial_f$ and a global and explicit geometric expression for $\ordRes_f$}
Let $f\in K(z)$ be of degree $d>1$. 
Let us introduce the following geometric function on $(\sH^1,\rho)$.

\begin{definition}\label{th:crucialdef}
The $f$-{\itshape crucial function} is defined by
 \begin{multline*}
 \sH^1\ni\cS\mapsto\Crucial_f(\cS)\\
 :=\frac{\rho(\cS,\cS_{\can})}{2}
 +\frac{\rho(\cS,f(\cS)\wedge_{\can}\cS)
 -\int_{\sP^1}\rho(\cS_{\can},\cS\wedge_{\can}\cdot)\rd(f^*\delta_{\cS_{\can}})}{d-1}\in\bR
 \end{multline*}
(the integration against $f^*\delta_{\cS_{\can}}$ 
in the right hand side is actually a finite sum). 
Here, for any $\cS_0,\cS,\cS'\in\sP^1$,
we let $\cS\wedge_{\cS_0}\cS'$ 
be the unique 
point in $\sP^1$ in the intersection of all the (closed) intervals
$[\cS,\cS']$, $[\cS,\cS_0]$, and $[\cS',\cS_0]$ in $\sP^1$, and 
for any $\cS,\cS'\in\sP^1$, we set
$\cS\wedge_{\can}\cS':=\cS\wedge_{\cS_{\can}}\cS'$ for simplicity. 
\end{definition}

One of our principal results is the following.

\begin{mainth}\label{th:resultant}
Let $f\in K(z)$ be of degree $d>1$. Then for every $h\in\PGL(2,K)$, 
\begin{gather}
 \Crucial_f(h(\cS_{\can}))
=-\frac{1}{2d(d-1)}
\log\frac{|\Res(\text{a minimal lift of }h^{-1}\circ f\circ h)|}{|\Res(\text{a minimal lift of }f)|}.
\label{eq:resultant}
\end{gather}
Moreover, $\Crucial_f$ is continuous on $(\sH^1,\rho)$ and satisfies
\begin{multline}
(\cS,\cS_0)\mapsto\Crucial_f(\cS)-\Crucial_f(\cS_0)\\
=\frac{\rho(\cS,\cS_0)}{2}
+\frac{\rho(\cS,f(\cS)\wedge_{\cS_0}\cS)
-\int_{\sP^1}\rho(\cS_0,\cS\wedge_{\cS_0}\cdot)\rd(f^*\delta_{\cS_0})}{d-1}
\quad\text{on }\sH^1\times\sH^1.
\label{eq:conjugate}
\end{multline}
\end{mainth}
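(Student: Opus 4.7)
The plan is to establish the theorem in three steps: first the main identity (1.1) on the set of type II points, then continuity of $\Crucial_f$, then the change-of-basepoint formula (1.2) and its extension to $\sH^1\times\sH^1$.

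For (1.1), rewrite its right-hand side as $(\ordRes_f(h(\cS_{\can}))-\ordRes_f(\cS_{\can}))/(2(\deg f)(\deg f-1))$ via Rumely's definition $\ordRes_f(h(\cS_{\can})) = -\log|\Res(\text{minimal lift of } h^{-1}\circ f\circ h)|$. Since $\PGL(2,K)$ acts transitively on type II points through $h\mapsto h(\cS_{\can})$, the identity reduces to showing
\[
2(\deg f)(\deg f-1)\Crucial_f(\cS) = \ordRes_f(\cS)-\ordRes_f(\cS_{\can}) \quad\text{for type II }\cS\in\sH^1.
\]
I would prove this by a direct computation with a minimal lift $H$ of $f$: after pre-composing $h$ with an element of the $\cS_{\can}$-stabilizer $\PGL(2,\cO_K)$, reduce to a diagonal $h$, and expand $\Res(h^{-1}Hh)/\Res(H)$ in terms of the entries of $h$ and of $H$. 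The result splits into a global-scaling contribution proportional to $\rho(h(\cS_{\can}),\cS_{\can})$ (yielding the $(\deg f)(\deg f-1)\rho(\cS,\cS_{\can})$ term) and local contributions indexed by $\cS'\in f^{-1}(\cS_{\can})$ with multiplicities $\deg_{\cS'}(f)$; the latter, regrouped against $f^*\delta_{\cS_{\can}}=\sum_{\cS'\in f^{-1}(\cS_{\can})}\deg_{\cS'}(f)\delta_{\cS'}$, produce both the join term $\rho(\cS,f(\cS)\wedge_{\can}\cS)$ and the barycentric integral $\int_{\sP^1}\rho(\cS_{\can},\cS\wedge_{\can}\cdot)\,\rd(f^*\delta_{\cS_{\can}})$.

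Continuity of $\Crucial_f$ on $(\sH^1,\rho)$ follows summand-by-summand: $\rho(\cdot,\cS_{\can})/2$ is trivially continuous; $\cS\mapsto\rho(\cS,f(\cS)\wedge_{\can}\cS)$ is continuous because $f$, the join $\wedge_{\can}$, and $\rho$ on pairs in $\sH^1$ are all continuous; and the integral is continuous by dominated convergence, since $f^*\delta_{\cS_{\can}}$ is a finite positive Radon measure and the integrand is bounded by $\rho(\cS,\cS_{\can})$ uniformly in the integration variable. For (1.2) with type II $\cS, \cS_0\in\sH^1$, pick $h_0\in\PGL(2,K)$ with $h_0(\cS_{\can})=\cS_0$ and apply (1.1) both to $f$ at base point $\cS_{\can}$ and to the conjugate $f_{h_0}:=h_0^{-1}\circ f\circ h_0$, using that $h_0$ is a $\rho$-isometry carrying $\wedge_{\can}$ to $\wedge_{\cS_0}$ and $f_{h_0}^*\delta_{\cS_{\can}}$ to $f^*\delta_{\cS_0}$. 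Subtracting the two instances, combined with the standard behaviour of resultants under iterated conjugation, produces (1.2) for type II pairs; density of type II points in $(\sH^1,\rho)$ and continuity of both sides in $(\cS,\cS_0)$ then extend (1.2) to all of $\sH^1\times\sH^1$.

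The main obstacle is the geometric bookkeeping in the proof of the reduced identity in the first step: the algebraically-defined resultant difference must be matched against a sum that splits asymmetrically between $\cS$ and its image $f(\cS)$ through the join $f(\cS)\wedge_{\can}\cS$. The correct coefficients are revealed only after one carefully tracks which preimages $\cS'\in f^{-1}(\cS_{\can})$ lie on $[\cS_{\can},\cS]$, off it, or at $\cS$ itself, and offsets the local contributions of each trichotomy against the integral against $f^*\delta_{\cS_{\can}}$.
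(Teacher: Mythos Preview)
Your three-step structure and your derivation of (1.2) from (1.1) via conjugation match the paper. The gap is in the first step. You say the resultant expansion produces ``local contributions indexed by $\cS'\in f^{-1}(\cS_{\can})$,'' and your ``main obstacle'' paragraph frames the bookkeeping as tracking which such preimages lie on $[\cS_{\can},\cS]$. That is not how the computation unfolds. Writing $h$ as the affine map $z\mapsto a+bz$ (diagonal $h$ reaches only balls centered at $0$, so your reduction is slightly too strong), the difference of log-resultants becomes $-d(d-1)\log|b|$ plus $2d\log\sup_{|z|\le 1}\|H_{a,b}^{-1}\circ F\circ H_{a,b}(1,z)\|$, where $F$ is a minimal lift of $f$. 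That sup factors into two pieces: the scalar $T_F(\cS):=\log\sup_{\cB_{\cS}}\|F(1,\cdot)\|$, and a maximum of the two Hsia-kernel ratios $[f(\cS),\infty]_{\can}/[\cS,\infty]_{\can}$ and $[f(\cS),a]_{\can}/[\cS,a]_{\can}$. The integral against $f^*\delta_{\cS_{\can}}$ comes from $T_F$ via potential theory---one has $\Delta T_F=f^*\delta_{\cS_{\can}}-d\,\delta_{\cS_{\can}}$ and $T_F(\cS_{\can})=0$, hence $T_F(\cS)=-\int\rho(\cS_{\can},\cS\wedge_{\can}\cdot)\,\rd(f^*\delta_{\cS_{\can}})$---not from any direct enumeration of preimages in the resultant. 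The join term $\rho(\cS,f(\cS)\wedge_{\can}\cS)$ comes from the maximum, and the case analysis required there is about the position of $f(\cS)$ relative to the tripod $\Gamma_{\{a,\infty,\cS_{\can}\}}$; it has nothing to do with $f^{-1}(\cS_{\can})$.

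A secondary gap: in extending (1.2) by density, continuity of the right-hand side in $\cS_0$ is not automatic, since both the integrand $\rho(\cS_0,\cS\wedge_{\cS_0}\cdot)$ and the measure $f^*\delta_{\cS_0}$ vary with $\cS_0$. The paper handles this by a Green's-formula rewriting of the integral in terms of $\log[f(\cS),\cS_0]_{\can}$, $\log[f(\cS_0),\cS_0]_{\can}$, and $T_F$, each of which is manifestly $\rho$-continuous in $\cS_0$.
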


The equality \eqref{eq:resultant} together with the continuity of
$\Crucial_f$ on $(\sH^1,\rho)$ recovers the {\itshape continuous interpolation} 
assertion in Rumely \cite[Theorem 1.1]{Rumely13}, giving 
a {\em global} and {\em explicit} hyperbolic geometric expression of $\ordRes_f$. 

\begin{maincoro}
Let $f\in K(z)$ be of degree $d>1$. Then
\begin{gather}
 \ordRes_f
 =2d(d-1)\cdot\Crucial_f
 -\log|\Res(\text{a minimal lift of }f)|\label{eq:formulaorder}
 \end{gather}
on $\sH^1$, or more explicitly, for every $\cS\in\sH^1$,
\begin{multline}
\ordRes_f(\cS)-\ordRes_f(\cS_{\can})\\
=d(d-1)\cdot\rho(\cS,\cS_{\can})
 +2d\cdot\rho(\cS,f(\cS)\wedge_{\can}\cS)\\
 -2d\cdot\sum_{\cS'\in f^{-1}(\cS_{\can})}\deg_{\cS'}(f)\cdot\rho(\cS\wedge_{\can}\cS',\cS_{\can}).
\end{multline}
More generally, for every $\cS\in\sH^1$ and every $\cS_0\in\sH^1$,
\begin{multline}
\ordRes_f(\cS)-\ordRes_f(\cS_0)\\
=d(d-1)\cdot\rho(\cS,\cS_0)
 +2d\cdot\rho(\cS,f(\cS)\wedge_{\cS_0}\cS)\\
 -2d\cdot\sum_{\cS'\in f^{-1}(\cS_0)}\deg_{\cS'}(f)\cdot\rho(\cS\wedge_{\cS_0}\cS',\cS_0).
\end{multline}
\end{maincoro}

We will see that the global and explicit 
{\em difference} formula \eqref{eq:conjugate}
for $\Crucial_f$ is useful for investigating 
the minimal resultant locus $\MinResLoc_f$, 
the $f$-crucial set $\supp(\nu_f)$, and 
the $f$-crucial tree $\Gamma_f=\Gamma_{\supp(\nu_f)}$.

\subsection{The convexity of $\Crucial_f$}
For a discussion of the tangent space $T_{\cS}\sP^1$
of $\sP^1$ at a point $\cS\in\sP^1$,
see Subsection \ref{sec:tangent}.
For every point $\cS\in\sH^1$ and every {\em direction} 
$\overrightarrow{v}=\overrightarrow{\cS\cS'}\in T_{\cS}\sP^1$, 
$\rd_{\overrightarrow{v}}=(\rd_{\overrightarrow{v}})_{\cS}$ is the (distributional) directional derivation
operator at $\cS$ with respect to $\overrightarrow{v}$
on the space of continuous functions on $(\sH^1,\rho)$ (see \eqref{eq:derivation}
below). 

Recall that for each $\cS\in\sP^1$, $\#T_{\cS}\sP^1=1$ 
iff $\cS$ is type either I or IV, $\#T_{\cS}\sP^1=2$ iff $\cS$ is type III,
and $\#T_{\cS}\sP^1>2$ iff $\cS$ is type II.
The following terminology will be useful.

\begin{definition}
 A function $\phi$ on $\sH^1$ 
 is {\itshape piecewise affine on} $(\sH^1,\rho)$
 if for every $\cS'\in\sH^1$ and every $\overrightarrow{\cS'\cS''}\in T_{\cS'}\sP^1$,
 diminishing $[\cS',\cS'']$ if necessary,
 $\phi$ (or the restriction of $\phi$ to $[\cS',\cS'']$)
 is affine on $([\cS',\cS''],\rho)$. A piecewise affine function $\phi$
 on $(\sH^1,\rho)$ is {\itshape locally affine on} $(\sP^1,\tilde{\rho})$
 {\itshape at a point} $\cS'\in\sP^1$ if (i) in the case where $\cS'\in\sH^1$,
 for any {\itshape distinct} directions 
 $\overrightarrow{\cS'\cS_1},\overrightarrow{\cS'\cS_2}\in T_{\cS'}\sP^1$, 
 diminishing $[\cS',\cS_1]$ and $[\cS',\cS_2]$ if necessary,
 $\phi$ is affine on $([\cS_1,\cS_2],\rho)$ 
 and (ii) in the case where $\cS'=a\in\bP^1$, 
 for every (indeed the unique) 
$\overrightarrow{v}=\overrightarrow{a\cS''}\in T_a\sP^1$, 
 diminishing $[a,\cS'']$ if necessary,
 $\phi$ is affine on $((a,\cS''],\rho)$. 

 A continuous
 function $\phi$ on $(\sH^1,\rho)$ is {\itshape convex} on $(\sH^1,\rho)$ if 
 for every $\cS\in\sH^1$ and any 
 distinct $\overrightarrow{v_1},\overrightarrow{v_2}\in T_{\cS}\sP^1$,
 $(\rd_{\overrightarrow{v_1}}+\rd_{\overrightarrow{v_2}})\phi\ge 0$.
\end{definition}

The following is obtained by differentiating $\Crucial_f$ on $(\sH^1,\rho)$
using \eqref{eq:conjugate}, and improves
Rumely \cite[Proposition 2.3]{Rumely13} 
so that at any classical fixed point $a\in\bP^1$ of $f$,
$\Crucial_f$ (or $\ordRes_f$)
is directly related to the chordal derivative $f^{\#}(a)$ 
(see Definition \ref{th:chordal}); this kind of 
relationship, in a less explicit way, 
has been known only in the case where $d=2$ in \cite{DJR15}.

\begin{mainth}\label{th:convex}
Let $f\in K(z)$ be of degree $d>1$. 
$($i$)$ For every $\cS_0\in\sH^1$ and every direction $\overrightarrow{v}
=\overrightarrow{\cS_0\cS'}\in T_{\cS_0}\sP^1$,
diminishing $[\cS_0,\cS']$ if necessary,
the functions $\cS\mapsto\rho(\cS,f(\cS)\wedge_{\cS_0}\cS)$ and
$\cS\mapsto\int_{\sP^1}\rho(\cS_0,\cS\wedge_{\cS_0}\cdot)
\rd(f^*\delta_{\cS_0})$ are affine on $[\cS_0,\cS']$, and 
on their slopes, we have
$d_{\overrightarrow{v}}(\cS\mapsto\rho(\cS,f(\cS)\wedge_{\cS_0}\cS))\in\{0,1\}$
and $d_{\overrightarrow{v}}(
\cS\mapsto\int_{\sP^1}\rho(\cS_0,\cS\wedge_{\cS_0}\cdot)
\rd(f^*\delta_{\cS_0}))\in\{0,1,\ldots,d\}$.
For every $a\in\bP^1$ and every $\cS_0\in\sH^1$,
there is $\cS_0'\in(a,\cS_0]$ so 
that 
on $(a,\cS_0']$,
\begin{gather}
 \cS\mapsto\rho(\cS,f(\cS)\wedge_{\cS_0}\cS)
 \begin{cases}
 \equiv\log\max\{1,f^\#(a)\} & \text{if }f(a)=a\\
 =\rho(\cS,\cS_0)-\rho\bigl(\cS_0,f(a)\wedge_{\cS_0}a\bigr)
  & \text{if }f(a)\neq a
 \end{cases}\text{ and}\label{eq:typeI}\\
 \cS\mapsto\int_{\sP^1}\rho(\cS_0,\cS\wedge_{\cS_0}\cdot)\rd(f^*\delta_{\cS_0})
\text{ is constant}. 
\label{eq:typeIpullback}
\end{gather}

$($ii$)$ In addition to being continuous,
the function $\Crucial_f$ is 
piecewise affine and convex on $(\sH^1,\rho)$, and
is locally affine on $(\sP^1,\tilde{\rho})$ at every
type {\em I}, {\em III}, or {\em IV} point in $\sP^1$.
For every $\cS\in\sH^1$ and every $\overrightarrow{v}\in T_{\cS}\sP^1$,
we have
\begin{gather}
\rd_{\overrightarrow{v}}\Crucial_f
\in\left\{\frac{1}{2}\cdot\frac{d+1-2m}{d-1}:
m\in\{0,1,2,\ldots,d+1\}\right\}\label{eq:rangecrucial} 
\end{gather}
and, for every type {\em I} or {\em IV} point 
$\cS'\in\sP^1$ and every $\cS_0\in\sP^1\setminus\{\cS'\}$, 
if $\cS\in[\cS',\cS_0)\cap\sH^1$ is close enough to $\cS'$, then
\begin{gather}
 \rd_{\overrightarrow{\cS\cS_0}}\Crucial_f
=
\begin{cases}
     -\frac{1}{2} &\text{if }f(\cS')=\cS'\\
     -\frac{1}{2}-\frac{1}{d-1} &\text{if }f(\cS')\neq\cS' 
\end{cases}\le-\frac{1}{2}<0.\label{eq:slopeI}
\end{gather}
\end{mainth}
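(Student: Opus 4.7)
The overall strategy is to establish (i) first via local analyses of the two scalar-valued functions appearing in \eqref{eq:conjugate}, and then to derive (ii) by substitution into \eqref{eq:conjugate}.

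For (i), I would analyze the functions $\cS \mapsto \rho(\cS, f(\cS)\wedge_{\cS_0}\cS)$ and $\cS \mapsto \int_{\sP^1}\rho(\cS_0, \cS\wedge_{\cS_0}\cdot)\rd(f^*\delta_{\cS_0})$ separately along a short interval $[\cS_0, \cS']$ in direction $\overrightarrow{v} = \overrightarrow{\cS_0\cS'}$. For the first function, the value at $\cS$ close to $\cS_0$ obeys a dichotomy: if $f(\cS)$ lies on the ray from $\cS_0$ in direction $\overrightarrow{v}$, the meet equals $\cS$ (distance $0$, slope $0$); otherwise the meet equals $\cS_0$ (distance $\rho(\cS, \cS_0)$, slope $1$). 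Which alternative occurs is decided by whether $\tilde f_{\cS_0}$ fixes $\overrightarrow{v}$ (if $f(\cS_0) = \cS_0$) or whether $\overrightarrow{v} = \overrightarrow{\cS_0 f(\cS_0)}$ (otherwise). For the second function, I would decompose $f^*\delta_{\cS_0} = \sum_{\cS'' \in f^{-1}(\cS_0)} \deg_{\cS''}(f)\,\delta_{\cS''}$; a preimage $\cS''$ in direction $\overrightarrow{v}$ contributes $\deg_{\cS''}(f)\cdot\rho(\cS_0, \cS)$ (for $\cS$ close enough to $\cS_0$ to lie between $\cS_0$ and $\cS''$), while other preimages contribute $0$, giving slope in $\{0, 1, \ldots, \deg f\}$. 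For the behavior near a type I point $a \in \bP^1$, the preimages of $\cS_0 \in \sH^1$ lie in $\sH^1$ and are thus bounded away from $a$, so the meet $\cS\wedge_{\cS_0}\cS''$ is constant for $\cS$ close enough to $a$ and \eqref{eq:typeIpullback} follows. For the first function near $a$: if $f(a) \neq a$, continuity gives $f(\cS) \to f(a)$, so $f(\cS)\wedge_{\cS_0}\cS$ stabilizes to $f(a)\wedge_{\cS_0}a$; if $f(a) = a$, I would invoke the standard description of $f$ along the canonical ray $(a, \cS_0]$, where $f$ acts as an isometric translation by $\log\max\{1, f^{\#}(a)\}$, making the quantity constant.

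For (ii), substituting (i) into \eqref{eq:conjugate} along $[\cS_0, \cS']$ yields $\rd_{\overrightarrow{v}}\Crucial_f = \frac{1}{2} + \frac{a - b}{\deg f - 1}$, where $a \in \{0, 1\}$ and $b \in \{0, 1, \ldots, \deg f\}$ are the slopes of the two auxiliary functions; writing $m := 1 + b - a \in \{0, 1, \ldots, \deg f + 1\}$ recovers \eqref{eq:rangecrucial} and establishes piecewise affine-ness. For convexity at $\cS_0 \in \sH^1$ with distinct directions $\overrightarrow{v_1}, \overrightarrow{v_2}$ and slopes $(a_i, b_i)$, the required inequality $b_1 + b_2 \leq (\deg f - 1) + (a_1 + a_2)$ splits into cases: $a_1 + a_2 \geq 1$ gives it via $b_1 + b_2 \leq \deg f$, while $a_1 = a_2 = 0$ forces $f(\cS_0) = \cS_0$ (because if $f(\cS_0) \neq \cS_0$, only the single direction $\overrightarrow{\cS_0 f(\cS_0)}$ yields $a_i = 0$), and then $b_1 + b_2 \leq \deg f - \deg_{\cS_0}(f) \leq \deg f - 1$ closes the case. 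Local affine-ness and the slope formula \eqref{eq:slopeI} at type I and IV points follow by substituting \eqref{eq:typeI} and \eqref{eq:typeIpullback} (and their type IV analogs) into \eqref{eq:conjugate}; local affine-ness at type III reduces to verifying that the slopes in the two tangent directions cancel, which is a direct case check using the dichotomies from (i).

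The main obstacle I anticipate is the identification in \eqref{eq:typeI} of $\rho(\cS, f(\cS)\wedge_{\cS_0}\cS)$ as $\log\max\{1, f^{\#}(a)\}$ at type I fixed points $a$: this requires the precise conversion between the chordal metric on $\bP^1$ and the hyperbolic metric on $\sH^1$, handling attracting ($f^{\#}(a) \leq 1$) and repelling ($f^{\#}(a) > 1$) sub-cases, together with the analogous treatment at type IV points, which lack a disk-based description.
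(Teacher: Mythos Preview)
Your approach is essentially the same as the paper's: analyze the two auxiliary functions via the same dichotomies (the paper records these as \eqref{eq:fixed} and \eqref{eq:outsidetree}), substitute into \eqref{eq:conjugate}, and run the same convexity case-split. Your parametrization $m = 1 + b - a$ and your convexity argument match the paper's exactly.

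There is one genuine missing ingredient. For local affine-ness at a type~III point $\cS_0$ with $f(\cS_0) = \cS_0$, your ``direct case check using the dichotomies from (i)'' is not enough on its own: you need to know that $f_*$ fixes both tangent directions at $\cS_0$ \emph{and} that $\deg_{\cS_0}(f) = 1$. Neither follows from the dichotomies; both are the content of Rivera-Letelier's lemma (Theorem~\ref{th:indifferent} in the paper, \cite[Lemmas 5.3 and 5.4]{Juan03Compositio}). Without it, the case where $f_*$ swaps the two directions, or where $\deg_{\cS_0}(f) > 1$, is not ruled out, and the slopes would not cancel. The same lemma is needed for the type~IV case of \eqref{eq:slopeI} when $f(\cS') = \cS'$: to get $(f^*\delta_{\cS'})(U_{\overrightarrow{v}}) = d - 1$ you need $\deg_{\cS'}(f) = 1$. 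Once you invoke Theorem~\ref{th:indifferent} at these two places, your argument goes through.

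A minor point: your description of $f$ near a type~I fixed point $a$ as ``an isometric translation by $\log\max\{1, f^\#(a)\}$'' is only accurate when $\deg_a f = 1$; when $\deg_a f > 1$ the map expands by factor $\deg_a f$ along the ray (so $f(\cS) \in (a,\cS]$ and the quantity is still $0 = \log\max\{1,0\}$). The paper handles this by splitting into three sub-cases ($\deg_a f > 1$; $\deg_a f = 1$ with $f(\cS_0') \in (a,\cS_0']$; $\deg_a f = 1$ with $\cS_0' \in (a, f(\cS_0')]$). You already flag this as your main obstacle, so you will likely arrive at the same split.
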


A fundamental consequence of Theorem \ref{th:convex}(ii) is that
$\min_{\sH^1}\Crucial_f$ exists in $\bR$, so
\begin{gather*}
 (\MinResLoc_f=)\Crucial_f^{-1}\Bigl(\min_{\sH^1}\Crucial_f\Bigr)\neq\emptyset.
\end{gather*}

\subsection{The $\Gamma$-curvature $\nu_{f,\Gamma}$ of $\Crucial_f|\Gamma$ 
and the minimum locus of $\Crucial_f|(\Gamma\cap\sH^1)$
on a non-trivial finite subtree $\Gamma$ in $\sP^1$}

\begin{definition}
For each non-empty subset $S$ in $\sP^1$,
 let $\Gamma_S$ be the subtree in $\sP^1$ 
 {\itshape spanned by} $S$, that is, the minimal subtree in $\sP^1$
 containing $S$. 
 A {\itshape finite subtree} in $\sP^1$ is a subtree in $\sP^1$ spanned
 by a non-empty and finite subset in $\sP^1$
 (but not necessarily in $\sH^1$),
 and a {\itshape finite tree in} $\sH^1$ is a finite subtree in $\sP^1$
 contained in $\sH^1$. 
A subtree in $\sP^1$ is said to be {\itshape trivial}
 if it is a singleton in $\sP^1$, and otherwise, {\itshape non-trivial}. 

 For any subtrees $\Gamma,\Gamma'$ in $\sP^1$ 
 satisfying $\Gamma'\subset\Gamma$, 
 $\iota_{\Gamma',\Gamma}$ and
 $r_{\Gamma,\Gamma'}$ are the inclusion from $\Gamma'$ to $\Gamma$
 and the retraction from $\Gamma$ to $\Gamma'$, respectively.
\end{definition}

For a discussion of the tangent space $T_{\cS}\Gamma$
{\itshape of a subtree} $\Gamma$ in $\sP^1$ at a point $\cS\in\Gamma$, 
see Subsection \ref{sec:tangent}. 
A point $\cS\in\Gamma$ is called an {\itshape end $($resp.\ branch$)$ point}
of $\Gamma$ if the valence 
\begin{gather*}
 v_{\Gamma}(\cS)=\#(T_{\cS}\Gamma)\in\bN\cup\{0,+\infty\} 
\end{gather*}
of $\Gamma$ at $\cS$
is $<2$ (resp.\ $>2$); $v_{\Gamma}(\cS)=0$ occurs if and only if 
$\Gamma=\Gamma_{\{\cS\}}=\{\cS\}$.
For a discussion of the Laplacian $\Delta_{\Gamma}$
on a subtree $\Gamma$ in $\sP^1$, see Subsection \ref{sec:laplacian}.
To each point $\cS\in\sP^1$ and each {\em direction}
$\overrightarrow{v}\in T_{\cS}\sP^1$, we associate the component 
$U_{\overrightarrow{v}}=U_{\cS,\overrightarrow{v}}
 :=\{\cS'\in\sP^1:\overrightarrow{\cS\cS'}=\overrightarrow{v} \}$
of $\sP^1\setminus\{\cS\}$.

By Theorem \ref{th:convex}(ii),
for every non-trivial finite subtree $\Gamma$ in $\sP^1$,
the minimum of the restriction $\Crucial_f|(\Gamma\cap\sH^1)$ exists in $\bR$, and 
the (signed) Radon measure $\Delta_{\Gamma}(\Crucial_f|\Gamma)$ on $\Gamma$
is supported on a finite subset in $\Gamma$ and 
satisfies 
$(\Delta_{\Gamma}(\Crucial_f|\Gamma))(\Gamma)=0$,
$(\Delta_{\Gamma}(\Crucial_f|\Gamma))|
(\Gamma\setminus\{\text{end points of }\Gamma\})\ge 0$,
and by \eqref{eq:slopeI} (and \eqref{eq:outer} below),
for every type I or IV point $\cS'\in\Gamma$,
\begin{gather}
 (\Delta_{\Gamma}(\Crucial_f|\Gamma))(\{\cS'\})
=
\begin{cases}
     -\frac{1}{2} &\text{if }f(\cS')=\cS'\\
     -\frac{1}{2}-\frac{1}{d-1} &\text{if }f(\cS')\neq\cS' 
    \end{cases}\le-\frac{1}{2}<0.\label{eq:LaplacianI}
\end{gather}

\begin{definition}[the $\Gamma$-valency measure on $\Gamma$
and the $\Gamma$-curvature]
For every non-trivial finite subtree $\Gamma$ in $\sP^1$,
let us define the (signed) Radon measures
 \begin{gather}
 \nu_{\Gamma}:=(-2)^{-1}\cdot\sum_{\cS\in\Gamma}\bigl(v_{\Gamma}(\cS)-2\bigr)\cdot
 (r_{\sP^1,\Gamma})_*\delta_{\cS}\quad\text{on }\Gamma
\quad\text{(cf.\ \cite{CR93}) and}
 \label{eq:valency}\\
 \nu_{f,\Gamma}:=\Delta_{\Gamma}\bigl(\Crucial_f|\Gamma\bigr)+\nu_{\Gamma}\quad\text{on }\Gamma;
\label{eq:crucialgeneral}
 \end{gather}
by the Euler genus theorem, the {\em valency measure} $\nu_{\Gamma}$ 
of $\Gamma$ has
the total mass $1$, and in turn so does the $\Gamma$-{\em curvature} $\nu_{f,\Gamma}$
of $\Crucial_f|\Gamma$. Moreover,
each of them is supported on a {\itshape finite} subset in $\Gamma$. 
\end{definition}

For every non-trivial finite subtree $\Gamma$ in $\sP^1$,
we also call the $\Gamma$-curvature $\nu_{f,\Gamma}$ of $\Crucial_f|\Gamma$
the $\Gamma$-{\em crucial measure} of $f$, and call
the finite subset $\supp(\nu_{f,\Gamma})$ in $\Gamma$
and the finite subtree in $\Gamma$ spanned by 
$\supp(\nu_{f,\Gamma})$ the $\Gamma$-{\em crucial set} of $f$ and 
the $\Gamma$-{\em crucial tree} of $f$, respectively.

\begin{definition}[the $\Gamma$-barycenter]
For every probability Radon measure $\nu$ on a subtree $\Gamma$ in $\sP^1$, 
the $\Gamma$-{\itshape barycenter} $\BC_{\Gamma}(\nu)$
of $\nu$ is defined as
\begin{gather*}
\BC_{\Gamma}(\nu):=\Bigl\{\cS\in\Gamma:
\text{for every }\overrightarrow{v}\in T_{\cS}\Gamma,
((\iota_{\Gamma,\sP^1})_*\nu)(U_{\overrightarrow{v}})\le\frac{1}{2}\Bigr\}.
\end{gather*} 
\end{definition}

The following is also one of our principal results;
the part (i) consists of the {\itshape geometric} formulas of 
the $\Gamma$-curvature $\nu_{f,\Gamma}$
and of the slope $\rd_{\overrightarrow{v}}\Crucial_f$ of $\Crucial_f|\Gamma$, and
the part (iii) asserts that 
for every non-trivial subtree $\Gamma$ in $\sP^1$ having no type III end points,
the positivity of the $\Gamma$-curvature $\nu_{f,\Gamma}$ of $\Crucial_f|\Gamma$,
which is stronger than the positivity of
$(\Delta_{\Gamma}(\Crucial_f|\Gamma))|(\Gamma\setminus\{\text{end points of }\Gamma\})$ mentioned above, restricts the geometry of 
the minimum locus of $\Crucial_f|(\Gamma\cap\sH^1)$ and
identifies it with the $\Gamma$-barycenter of $\nu_{f,\Gamma}$.

\begin{mainth}\label{th:weight}
Let $f\in K(z)$ be of degree $d>1$. 
Let $\Gamma$ be a non-trivial finite subtree in $\sP^1$. Then 

$($i$)$ for any $\cS_0\in\sH^1$, 
\begin{gather}
\nu_{f,\Gamma}=\frac{\Delta_\Gamma(\cS\mapsto\rho(\cS,f(\cS)\wedge_{\cS_0}\cS))
+(r_{\sP^1,\Gamma})_*(f^*\delta_{\cS_0}-\delta_{\cS_0})
}{d-1}\quad\text{on }\Gamma,
\label{eq:geometric}
\end{gather}
and for every $\cS_0\in\Gamma\cap\sH^1$ and every direction
$\overrightarrow{v}\in T_{\cS_0}\Gamma$, 
\begin{gather}
 \rd_{\overrightarrow{v}}\Crucial_f=\frac{1}{2}
-\bigl((\iota_{\Gamma,\sP^1})_*\nu_{f,\Gamma}\bigr)(U_{\overrightarrow{v}}).
\label{eq:slopeinside}
\end{gather}

$($ii$)$ For every $\cS'\in\Gamma$, 
we have $(d-1)\cdot\nu_{f,\Gamma}(\{\cS'\})\in\bN\cup\{0,-1\}$, and
$\nu_{f,\Gamma}$ is supported on $($a finite subset in$)$
$\Gamma\setminus\{\text{type {\em I or IV} points fixed by }f\}$.

Moreover, for every $\cS'\in\Gamma\setminus\{\text{type 
{\em I or IV} points not fixed by }f\}$,
we have $\nu_{f,\Gamma}(\{\cS'\})\ge 0$ unless the following statement 
\begin{gather*}
 \#T_{\cS'}\Gamma=1\quad\text{\rm and}\quad r_{\sP^1,\Gamma}(f(\cS'))\neq\cS',\\
\text{\rm and moreover }
f^{-1}\bigl(U_{\overrightarrow{\cS'f(\cS')}}\cap 
U_{\overrightarrow{f(\cS')\cS'}}\bigr)\subset 
U_{\overrightarrow{\cS'f(\cS')}}
\end{gather*}
is the case.

$($iii$)$ Let $\Gamma$ be a non-trivial finite subtree in $\sP^1$
having no type {\em III} end points. If $\nu_{f,\Gamma}\ge 0$ on $\Gamma$,
then there is  a $($unique$)$ subset $S_{f,\Gamma}$
in $\Gamma$ consisting of at most two type {\em II} points such that
 \begin{gather}
 \bigl(\Crucial_f|(\Gamma\cap\sH^1)\bigr)^{-1}\Bigl(\min_{\Gamma\cap\sH^1}\Crucial_f\Bigr)
 =\BC_{\Gamma}(\nu_{f,\Gamma})=\Gamma_{S_{f,\Gamma}}
\label{eq:mrlessential}
\end{gather}
and
\begin{gather}
 S_{f,\Gamma}=\Gamma_{S_{f,\Gamma}}\cap\bigl(\supp(\nu_{f,\Gamma})\cup\supp(\nu_{\Gamma_{\supp(\nu_{f,\Gamma})}})\bigr),\label{eq:interioressential}
 \end{gather}
and moreover, $\#S_{f,\Gamma}=1$ either 
if $d$ is even or if $\min_{\Gamma\cap\sH}\Crucial_f>\min_{\sH^1}\Crucial_f$. 
\end{mainth}

\subsection{Geometric formulas for Rumely's weight function $w_f$ 
and the $f$-crucial measure $\nu_f$}\label{sec:defining}

Let $\Fix(f)$ be the set of all {\em classical} fixed points of $f$ in $\bP^1$, that is,
$\Fix(f):=\{a\in\bP^1:f(a)=a\}$,
and set
\begin{gather*}
\Gamma_{\FP}=\Gamma_{f,\FP}:=\bigcap_{a\in\bP^1}\Gamma_{\Fix(f)\cup f^{-1}(a)}.
\end{gather*}
This is a (possibly trivial) finite subtree in $\sP^1$ 
all end points of which are type I or II and which satisfies
$\Gamma_{\FP}\cap\bP^1=\Fix(f)$.
We note that if $\Gamma_{\FP}$ is non-trivial, then the $\Gamma_{\FP}$-curvature
\begin{gather*}
 \nu_{f,\Gamma_{\FP}}=\Delta_{\Gamma}\bigl(\Crucial_f|\Gamma_{\FP}\bigr)
+\nu_{\Gamma_{\FP}} 
\end{gather*}
of $\Crucial_f|\Gamma_{\FP}$ is
a {\itshape probability} Radon measure on $\Gamma_{\FP}$ 
supported on at most $d-1$ type II points, 
and induces in turn a {\em weight} function 
\begin{gather}
 (d-1)\cdot\bigl((\iota_{\Gamma_{\FP},\sP^1})_*\nu_{f,\Gamma_{\FP}}\bigr)(\{\cdot\}):\sP^1\to\{0,1,\ldots,d-1\};\label{eq:weight} 
\end{gather}
indeed,
any {\itshape end point} $\cS'\in\sH^1$ of $\Gamma_{\FP}$ not only is type II 
but, writing $T_{\cS'}\Gamma_{\FP}=\{\overrightarrow{v}\}$, also satisfies
$f^{-1}(a)\not\subset U_{\overrightarrow{v}}$ for every $a\in U_{\overrightarrow{v}}\cap\bP^1$. In particular, if 
$r_{\sP^1,\Gamma}(f(\cS'))\neq\cS'$, then we have
$\overrightarrow{\cS'f(\cS')}=\overrightarrow{v}$ and 
$U_{\overrightarrow{\cS'f(\cS')}}\cap
U_{\overrightarrow{f(\cS')\cS'}}\cap\bP^1\neq\emptyset$, so
$f^{-1}\bigl(U_{\overrightarrow{\cS'f(\cS')}}\cap
U_{\overrightarrow{f(\cS')\cS'}}
\bigr)\not\subset U_{\overrightarrow{\cS'f(\cS')}}$.
Hence $(d-1)\nu_{f,\Gamma_{\FP}}(\{\cdot\})\in\{0,1,\ldots,d-1\}$  
on $\Gamma_{\FP}$ by Theorem \ref{th:weight}(ii), 
if $\Gamma_{\FP}$ is non-trivial.

Next, let $\Gamma_{\FR}=\Gamma_{f,\FR}$ be the 
(possibly not finite) subtree in $\sP^1$ spanned by the union of $\Fix(f)$ and 
the set of all type II fixed 
points $\cS\in\sH^1$ of $f$ satisfying $\deg_{\cS}(f)>1$,
which is non-trivial (see Rumely \cite[Lemma 4.1]{Rumely14}).
Rumely's {\itshape tree intersection theorem} 
\cite[Theorem 4.2]{Rumely14} asserts that 
\begin{gather}
 \Gamma_{\FP}=\Gamma_{\FR}.\label{eq:tree}
\end{gather}
In particular, $\Gamma_{\FP}=\Gamma_{\FR}$ is a {\itshape non-trivial} and
{\itshape finite} subtree in $\sP^1$.

The following complements Theorems \ref{th:convex} and \ref{th:weight}
and improves Rumely's slope formulas
\cite[Propositions 5.1, 5.2, 5.3, 5.4]{Rumely14}
in a suitable form for the proof of Theorem \ref{th:quantitative} below.
The proof is based not only on the difference formula
\eqref{eq:conjugate} but also on \eqref{eq:tree} 
and the three {\itshape identification lemmas}
\cite[Lemmas 2.1, 2.2, and 4.5]{Rumely14} (see Theorems \ref{th:1st3rd},
\ref{th:2nd}, and \ref{th:3rd} in Section \ref{sec:explicit}).
For the reduction $\tilde{h}\in k(z)$ modulo $\mathfrak{m}_K$ of $h\in K(z)$, see Definition \ref{th:reduction} below.

\begin{mainth}\label{th:defining}
Let $f\in K(z)$ be of degree $d>1$. Then

$($i$)$ For every $\cS'\in\sP^1$,
\begin{multline}
(d-1)\cdot\bigl((\iota_{\Gamma_{\FR},\sP^1})_*\nu_{f,\Gamma_{\FR}}\bigr)(\{\cS'\})\\
=
\begin{cases}
 \deg_{\cS'}(f)-1
 +\#\{\overrightarrow{v}\in T_{\cS'}\sP^1:
 U_{\overrightarrow{v}}\cap\Gamma_{\FR}\neq\emptyset\text{ and }f_*(\overrightarrow{v})\neq\overrightarrow{v}\}\\
 \hspace*{225pt}\text{if }f(\cS')=\cS'\in\sH^1,\\
 \max\left\{0,\#\{\overrightarrow{v}\in T_{\cS'}\sP^1:
 U_{\overrightarrow{v}}\cap\Gamma_{\FR}\neq\emptyset\}-2\right\}
 \hspace*{10pt}
   \text{if }f(\cS')\neq\cS'\in\sH^1,\\
 0\hspace*{219pt}\text{if }\cS'\in\bP^1
 \end{cases}\label{eq:computation}
\end{multline}
$($see also \eqref{eq:computationtree} in Section $\ref{sec:explicit}$
for every $\cS'\in\Gamma_{\FR})$.

$($ii$)$ Let $\Gamma$ be a non-trivial finite subtree in $\sP^1$,
and suppose that there are a point $\cS_{\Gamma}\in\Gamma_{\FR}$
and a direction $\overrightarrow{v_{\Gamma}}\in(T_{\cS_{\Gamma}}\sP^1)\setminus(T_{\cS_{\Gamma}}\Gamma_{\FR})$ such that 
$\Gamma\subset U_{\overrightarrow{v_{\Gamma}}}\cup\{\cS_{\Gamma}\}$.
Then for every $\cS'\in\Gamma$, 
\begin{multline}
(d-1)\cdot\left(\nu_{f,\Gamma}-(r_{\sP^1,\Gamma})_*\delta_{\cS_{\Gamma}}\right)(\{\cS'\})=\\
\begin{cases}
 0 \hspace*{10pt} \text{if {\rm (A1)} }\cS'\text{ is a type {\rm II} fixed point of }f\text{ such that }\widetilde{h\circ f\circ h^{-1}}=\Id_{\bP^1(k)}\\
\hspace*{50pt}
\text{for some $($and indeed any$)$ }
 h\in\PGL(2,K)\text{ sending }\cS'\text{ to }\cS_{\can},\\
\bigl((-2)\cdot\nu_{\Gamma}+
(r_{\sP^1,\Gamma})_*\delta_{\cS_{\Gamma}}
\bigr)(\{\cS'\})+1\\
\hspace*{15pt} \text{if {\rm (A2)} }\cS'\text{ is a type {\rm II} fixed point of }f
\text{ such that }\widetilde{ h\circ f\circ h^{-1}}\neq\Id_{\bP^1(k)}\\
\hspace*{50pt}\text{for some } h\in\PGL(2,K)\text{ sending }\cS'\text{ to }\cS_{\can},\\
0\hspace*{10pt}\text{if {\rm (A3)} }
\cS'\text{ is a type {\em III} or {\em IV} point fixed by }f,\\
\bigl(
(-2)\cdot\nu_{\Gamma}+(r_{\sP^1,\Gamma})_*\delta_{\cS_{\Gamma}}
\bigr)(\{\cS'\}) 
\hspace*{19pt}\text{if {\rm (B1)} } f(\cS')\neq\cS'\text{ and }\cS'\neq r_{\sP^1,\Gamma}(f(\cS')),\\
\bigl(
(-2)\cdot\nu_{\Gamma}+2(r_{\sP^1,\Gamma})_*\delta_{\cS_{\Gamma}}\bigr)(\{\cS'\})
\hspace*{14pt}\text{if {\rm (B2)} } f(\cS')\neq\cS'\text{ and }\cS'=r_{\sP^1,\Gamma}(f(\cS')).
\end{cases}
\label{eq:extendedtree}
\end{multline}

$($iii$)$ For every $\cS'\in\sP^1\setminus\Gamma_{\FR}$,
if $\cS\in[\cS',r_{\sP^1,\Gamma_{\FR}}(\cS'))\cap\sH^1$
is close enough to $\cS'$, then
\begin{gather}
 \rd_{\overrightarrow{\cS(r_{\sP^1,\Gamma_{\FR}}(\cS'))}}\Crucial_f
=
\begin{cases}
    -\frac{1}{2} &\text{if }f(\cS')=\cS'\\
    -\frac{1}{2}-\frac{1}{d-1} &\text{if }f(\cS')\neq\cS' 
    \end{cases}\le-\frac{1}{2}<0
\label{eq:slopeoutside}
\end{gather}
and, if $f(\cS')=\cS'$, then 
$f(\cS'')=\cS''$ for every $\cS''\in[\cS',r_{\sP^1,\Gamma_{\FR}}(\cS')]$ 
and moreover, for every $\cS''\in(\cS',r_{\sP^1,\Gamma_{\FR}}(\cS')]$
of type {\em II}, we also have 
$\widetilde{ h\circ f\circ h^{-1}}=\Id_{\bP^1(k)}$
for some $($indeed any$)$
$ h\in\PGL(2,K)$ sending $\cS''$ to $\cS_{\can}$. 
\end{mainth}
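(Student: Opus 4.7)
The plan is to take \eqref{eq:geometric} of Theorem \ref{th:weight}(i) as the computational backbone: each atomic mass of $\nu_{f,\Gamma}$ splits into a local Laplacian contribution from $\cS\mapsto\rho(\cS,f(\cS)\wedge_{\cS_0}\cS)$ and a push-forward-of-Dirac contribution $(r_{\sP^1,\Gamma})_*(f^*\delta_{\cS_0}-\delta_{\cS_0})$, each of which will be read off from the three identification lemmas (Theorems \ref{th:1st3rd}, \ref{th:2nd}, \ref{th:3rd}). These lemmas translate geometric data at $\cS'$ (slopes and branching of $\cS\mapsto\rho(\cS,f(\cS)\wedge_{\cS_0}\cS)$; locations of preimages of a Dirac) into algebraic data (the local degree $\deg_{\cS'}(f)$, the tangent map $f_*\colon T_{\cS'}\sP^1\to T_{f(\cS')}\sP^1$, and the reduction $\widetilde{\phi\circ f\circ\phi^{-1}}$ for $\phi\in\PGL(2,K)$ sending $\cS'$ to $\cS_{\can}$).

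For part (i), I would apply \eqref{eq:geometric} with $\Gamma=\Gamma_{\FR}$ and an arbitrary $\cS_0\in\sH^1$. At a type II fixed point $\cS'\in\sH^1$ of $f$, the pullback $f^*\delta_{\cS_0}$ contributes $\deg_{\cS'}(f)$ via the second identification lemma, while the Laplacian picks up exactly the count of directions $\overrightarrow{v}\in T_{\cS'}\sP^1$ meeting $\Gamma_{\FR}$ along which $f_*\overrightarrow{v}\neq\overrightarrow{v}$ via the first identification lemma; assembling yields the first line of \eqref{eq:computation}. At a non-fixed type II point, only the $\Gamma_{\FR}$-branches at $\cS'$ distinct from $\overrightarrow{\cS'f(\cS')}$ and $\overrightarrow{f(\cS')\cS'}$ survive the cancellation, leaving $\max\{0,\#\{\overrightarrow{v}:U_{\overrightarrow{v}}\cap\Gamma_{\FR}\neq\emptyset\}-2\}$. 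For $\cS'\in\bP^1$, \eqref{eq:tree} gives $\Gamma_{\FR}\cap\bP^1=\Fix(f)$, and the support statement in Theorem \ref{th:weight}(i) excludes any atom at a fixed type I point, closing the case.

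For part (ii), I would choose $\cS_0:=\cS_{\Gamma}$ in \eqref{eq:geometric}. Since $\Gamma\subset U_{\overrightarrow{v_{\Gamma}}}\cup\{\cS_{\Gamma}\}$, every preimage of $\cS_{\Gamma}$ outside $U_{\overrightarrow{v_{\Gamma}}}$ retracts to the root, so $(r_{\sP^1,\Gamma})_*(f^*\delta_{\cS_{\Gamma}}-\delta_{\cS_{\Gamma}})$ is concentrated at $\{\cS_{\Gamma}\}$. The subtraction of $(r_{\sP^1,\Gamma})_*\delta_{\cS_{\Gamma}}$ on the left of \eqref{eq:extendedtree} then isolates the purely Laplacian part at $\cS'\neq\cS_{\Gamma}$ and rebalances the atom at $\cS_{\Gamma}$. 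The five cases (A1)--(A3), (B1), (B2) match, direction by direction, the three identification lemmas applied at $\cS'$: (A1)/(A2) correspond to the trivial-vs.-nontrivial-reduction dichotomy at a type II fixed point, with the $+1$ in (A2) coming from the third identification lemma; (A3) vanishes by the range statement of Theorem \ref{th:convex}(i) at type III/IV fixed points; and (B1)/(B2) are distinguished by whether $r_{\sP^1,\Gamma}(f(\cS'))\neq\cS'$ or $=\cS'$, the extra $(r_{\sP^1,\Gamma})_*\delta_{\cS_{\Gamma}}$ in (B2) arising because $f$ then carries positive local degree into the $\cS'$-branch of $\Gamma$.

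Part (iii) would follow from a rigidity observation. If some type II $\cS''\in(\cS',r_{\sP^1,\Gamma_{\FR}}(\cS')]$ were a fixed point of $f$ with $\widetilde{\psi\circ f\circ\psi^{-1}}\neq\Id_{\bP^1(k)}$ for some $\psi\in\PGL(2,K)$ sending $\cS''$ to $\cS_{\can}$, then the second identification lemma would force $\deg_{\cS''}(f)>1$, placing $\cS''\in\Gamma_{\FR}$---contradiction. Combined with \eqref{eq:slopeI} applied at the outermost endpoint $\cS'$ (noting that no branching of $\Gamma_{\FR}$ occurs along the arc $(\cS',r_{\sP^1,\Gamma_{\FR}}(\cS'))$), this gives \eqref{eq:slopeoutside}. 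The final fixed-point assertion when $f(\cS')=\cS'$ follows because trivial reduction along the whole segment forces $f$ to act as the identity on the associated tangent spaces, propagating fixed-point-ness segment-wise. The main obstacle will be the bookkeeping of part (ii): unifying the five cases so that $\nu_{f,\Gamma}-(r_{\sP^1,\Gamma})_*\delta_{\cS_{\Gamma}}$ collapses to the precise integer right-hand side of \eqref{eq:extendedtree} requires carefully attributing the reduction-jump (distinguishing (A1) from (A2)) and the retraction (distinguishing (B1) from (B2)) contributions through the identification lemmas without double-counting.
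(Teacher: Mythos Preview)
Your plan follows the paper's general shape (use \eqref{eq:geometric} plus the identification lemmas), but there are two genuine gaps.

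\textbf{Part (iii): the rigidity step is wrong, and the logic is inverted.} You claim that if a type~II fixed point $\cS''\in(\cS',r_{\sP^1,\Gamma_{\FR}}(\cS')]$ has nontrivial reduction, then ``the second identification lemma would force $\deg_{\cS''}(f)>1$''. Both the lemma and the conclusion are incorrect. Theorem~\ref{th:2nd} concerns type~II points \emph{not} fixed by $f$; and more importantly, a type~II fixed point with $\deg_{\cS''}(f)=1$ can perfectly well have reduction equal to a nontrivial element of $\PGL(2,k)$, so nontrivial reduction does \emph{not} force $\deg_{\cS''}(f)>1$. The actual argument (via Theorem~\ref{th:1st3rd}) needs the extra input that $f_*(\overrightarrow{\cS''\cS'})=\overrightarrow{\cS''\cS'}$: then \eqref{eq:first} produces a classical fixed point in $U_{\overrightarrow{\cS''\cS'}}$, contradicting $U_{\overrightarrow{\cS''\cS'}}\cap\Fix(f)\subset U_{\overrightarrow{\cS''\cS'}}\cap\Gamma_{\FR}=\emptyset$. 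But knowing $f_*(\overrightarrow{\cS''\cS'})=\overrightarrow{\cS''\cS'}$ requires first establishing that the whole segment $[\cS',r_{\sP^1,\Gamma_{\FR}}(\cS')]$ is pointwise fixed, which in turn is deduced from \eqref{eq:slopeoutside} (the two slope values $-\tfrac12$ and $-\tfrac12\cdot\tfrac{d+1}{d-1}$ being distinct). So the correct order is: first \eqref{eq:slopeoutside}, then fixed-point propagation, then trivial reduction. Your order reverses this and is circular. Moreover, your appeal to \eqref{eq:slopeI} ``at the outermost endpoint $\cS'$'' only handles type~I or~IV $\cS'$; for type~II or~III $\cS'\in\sP^1\setminus\Gamma_{\FR}$ you need a separate computation, and the paper obtains it by applying the already-proved part~(ii) to the segment $\Gamma=[\cS',r_{\sP^1,\Gamma_{\FR}}(\cS')]$.

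\textbf{Part (ii): the choice $\cS_0=\cS_\Gamma$ does not give the simplification you claim.} You assert that $(r_{\sP^1,\Gamma})_*(f^*\delta_{\cS_{\Gamma}}-\delta_{\cS_{\Gamma}})$ is concentrated at the root because preimages of $\cS_\Gamma$ \emph{outside} $U_{\overrightarrow{v_\Gamma}}$ retract there. But nothing prevents $f^{-1}(\cS_\Gamma)$ from meeting $U_{\overrightarrow{v_\Gamma}}$, and those preimages can retract anywhere in $\Gamma$. The paper instead varies $\cS_0$ with $\cS'$ (taking $\cS_0\in\sH^1\setminus(r_{\sP^1,\Gamma})^{-1}(\cS')$, and in the non-fixed case further $\cS_0\in U_{\overrightarrow{f(\cS')\cS'}}$), so that \eqref{eq:alwaysgeneral} applies cleanly and the surplus-degree terms $s_{\overrightarrow{v}}(f)$ over $(T_{\cS'}\sP^1)\setminus(T_{\cS'}\Gamma)$ can be controlled via Theorems~\ref{th:1st3rd}, \ref{th:2nd}, \ref{th:3rd} together with $\Fix(f)\subset\Gamma_{\FR}$. (A smaller point: in part~(i) at a fixed $\cS'$ you invoke the second identification lemma for the pullback contribution; again, that lemma is for non-fixed points. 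What is actually used is Theorem~\ref{th:semiglobal} to decompose $(f^*\delta_{\cS_0})(U_{\overrightarrow{v}})$ and Theorems~\ref{th:1st3rd}, \ref{th:3rd} to kill the surplus terms off $\Gamma_{\FR}$.)
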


The condition (A1) in Theorem \ref{th:defining}(ii) is indeed the
defining condition of 
a {\em type {\em II} id-indifferent fixed} point $\cS'$ of $f$ 
in Rumely \cite[Definition 2(A)]{Rumely14}.
Taking into account
Rumely \cite[Propositions 3.1, 3.3, 3.4, 3.5]{Rumely14} on 
$\Gamma_{\FR}\setminus\Gamma_{\Fix(f)}$, 
the right hand side of \eqref{eq:computation} is nothing but
\cite[Definition 8]{Rumely14} of $w_f(\cS')$.
Also applying \eqref{eq:geometric} to $\Gamma_{\FR}$, we obtain the 
following formulas for Rumely's weight function $w_f$ and 
the $f$-crucial measure $\nu_f$. 

\begin{maincoro}
Let $f\in K(z)$ be of degree $d>1$.
Then for every $\cS_0\in\sH^1$, 
\begin{multline}
\label{eq:formulaweight}
w_f(\cdot)
=(d-1)\cdot\bigl((\iota_{\Gamma_{\FR},\sP^1})_*\nu_{f,\Gamma_{\FR}}\bigr)(\{\cdot\})\\
=(\iota_{\Gamma_{\FR},\sP^1})_*\bigl(\Delta_{\Gamma_{\FR}}
(\cS\mapsto\rho(\cS,f(\cS)\wedge_{\cS_0}\cS))
+(r_{\sP^1,\Gamma_{\FR}})_*(f^*\delta_{\cS_0}-\delta_{\cS_0})\bigr)(\{\cdot\})
\quad\text{on }\sP^1
\end{multline}
and
\begin{multline}
\nu_f=(\iota_{\Gamma_{\FR},\sP^1})_*\nu_{f,\Gamma_{\FR}}\\
=\frac{(\iota_{\Gamma_{\FR},\sP^1})_*\bigl(\Delta_{\Gamma_{\FR}}
 \bigl(\cS\mapsto\rho(\cS,f(\cS)\wedge_{\cS_0}\cS)\bigr)
+(r_{\sP^1,\Gamma_{\FR}})_*(f^*\delta_{\cS_0}
 -\delta_{\cS_0})\bigr)}{d-1}\quad\text{on }\sP^1.\label{eq:formulameasure}
\end{multline}
\end{maincoro}

The formulas \eqref{eq:formulaweight} and \eqref{eq:formulameasure}
imply that the $\Gamma_{\FR}$-crucial set $\supp(\nu_{f,\Gamma_{\FR}})$ 
and the $\Gamma_{\FR}$-crucial tree $\Gamma_{\supp(\nu_{f,\Gamma_{\FR}})}$
of $f$
coincide with the $f$-crucial set $\sP^1\setminus w_f^{-1}(0)$
and the $f$-crucial tree $\Gamma_{\supp\nu_f}=\Gamma_f$, respectively.
Hence \eqref{eq:mrlessential} and \eqref{eq:interioressential} applied to 
$\Gamma_{\FR}$ and \eqref{eq:slopeoutside} recover Rumely 
\cite[Theorem A]{Rumely14}.

\begin{maincoro}\label{th:crucialtree}
there is a $($unique$)$ subset $S_f$ in $\Gamma_{\FR}$ consisting of 
at most two type {\em II} points such that
\begin{gather*}
\MinResLoc_f=\BC_{\sP^1}(\nu_f)=\Gamma_{S_f}\quad\text{and}\quad 
S_f=\Gamma_{S_f}\cap\bigl(\supp(\nu_f)\cup\supp(\nu_{\Gamma_f})\bigr),
\end{gather*}
and moreover, $\#S_f=1$ if $d$ is even.
\end{maincoro}

\subsection{Effective bounds on the hyperbolic radii of 
the $\Gamma_{\FR}$-crucial tree of $f$ and of the minimum locus 
of $\Crucial_f$} 

The effective bound 
\eqref{eq:diamminresloc} below on the hyperbolic radius 
(centered at $\cS_{\can}$) of the minimum locus
of $\Crucial_f$ ($=\MinResLoc_f$) improves 
$\sup_{\MinResLoc_f}\rho(\cdot,\cS_{\can})\le(-2\cdot\log|\Res(\text{a minimal lift of }f)|)/(d-1)$ (Rumely \cite[Theorem 1.1]{Rumely13}). 

\begin{mainth}[see also Remark \ref{th:geomiterate}]\label{th:diam}
Let $f\in K(z)$ be of degree $d>1$. Then 
\begin{multline}
\sup_{\cS\in\Crucial_f^{-1}(\min_{\sH^1}\Crucial_f)}
\bigl((d-1)\cdot\rho(\cS,\cS_{\can})+2\cdot\rho(\cS,f(\cS)\wedge_{\can}\cS)\bigr)\\
\le -2\cdot\log|\Res(\text{a minimal lift of }f)|\label{eq:diamminresloc} 
\end{multline}
and, if in addition $d>2$, then
\begin{multline}
\sup_{\cS\in\Gamma_{\supp(\nu_{f,\Gamma_{\FR}})}}
\bigl(\rho(\cS,\cS_{\can})+\rho(\cS,f(\cS)\wedge_{\can}\cS)\bigr)\\
\le -\log|\Res(\text{a minimal lift of }f)|.\label{eq:diamcrucial}
\end{multline}
\end{mainth}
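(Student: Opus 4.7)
The plan is to reduce both inequalities to a single algebraic identity obtained by specializing the difference formula \eqref{eq:conjugate} at $\cS_0=\cS_{\can}$. Using $\Crucial_f(\cS_{\can})=0$ (which follows from \eqref{eq:resultant} applied with $h=\Id$) and clearing denominators, this yields
\begin{gather*}
(\deg f-1)\rho(\cS,\cS_{\can})+2\rho(\cS,f(\cS)\wedge_{\can}\cS)=2(\deg f-1)\Crucial_f(\cS)+2I(\cS)\quad\text{on }\sH^1,
\end{gather*}
where I abbreviate $I(\cS):=\int_{\sP^1}\rho(\cS_{\can},\cS\wedge_{\can}\cdot)\rd(f^*\delta_{\cS_{\can}})$.

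For \eqref{eq:diamminresloc} I would argue as follows. On $\MinResLoc_f$, $\Crucial_f$ attains its minimum, so $\Crucial_f(\cS)\le\Crucial_f(\cS_{\can})=0$ and the first summand on the right-hand side is nonpositive. For the second, the trivial geometric fact $\cS\wedge_{\can}\cS'\in[\cS_{\can},\cS']$ gives the pointwise domination $\rho(\cS_{\can},\cS\wedge_{\can}\cS')\le\rho(\cS_{\can},\cS')$ and hence
\begin{gather*}
I(\cS)\le\int_{\sP^1}\rho(\cS_{\can},\cdot)\rd(f^*\delta_{\cS_{\can}})=\sum_{\cS_j\in f^{-1}(\cS_{\can})}\deg_{\cS_j}(f)\,\rho(\cS_{\can},\cS_j).
\end{gather*}
This latter sum is bounded by $-\log|\Res(\text{a minimal lift of }f)|$ via the identification $\ordRes_f(\cS_{\can})=-\log|\Res(\text{a minimal lift of }f)|$ combined with the direct potential-theoretic expansion of $\ordRes_f$ at $\cS_{\can}$ in terms of the tree-distances to the (type II) points in $f^{-1}(\cS_{\can})$. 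Plugging these two bounds into the displayed identity yields \eqref{eq:diamminresloc}.

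For \eqref{eq:diamcrucial} when $\deg f>2$, I would rearrange the same identity into
\begin{gather*}
\rho(\cS,\cS_{\can})+\rho(\cS,f(\cS)\wedge_{\can}\cS)=(\deg f-1)\Crucial_f(\cS)+\tfrac{3-\deg f}{2}\rho(\cS,\cS_{\can})+I(\cS).
\end{gather*}
Since $\MinResLoc_f\subset\Gamma_{\supp\nu_{f,\Gamma_{\FR}}}$ (from the $\Gamma=\Gamma_{\FR}$ case of Theorem \ref{th:weight}(ii) combined with \eqref{eq:mrlessential}--\eqref{eq:interioressential}), and since $\Crucial_f\le 0$ on $\MinResLoc_f$, $(3-\deg f)/2\le 0$ for $\deg f\ge 3$, and $I\le-\log|\Res(\text{a minimal lift of }f)|$ as above, the bound is immediate on $\MinResLoc_f$. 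The main obstacle is extending to $\Gamma_{\supp\nu_{f,\Gamma_{\FR}}}\setminus\MinResLoc_f$, on which $\Crucial_f$ may be strictly positive. I would handle this via the slope formula \eqref{eq:slopeinside} specialized to $\Gamma=\Gamma_{\FR}$, which yields $\rd_{\overrightarrow{v}}\Crucial_f=\tfrac{1}{2}-\nu_f(U_{\overrightarrow{v}})$ along tangent directions of the crucial tree; combined with the barycentric description $\MinResLoc_f=\BC_{\sP^1}(\nu_f)$, the slope range \eqref{eq:rangecrucial}, and the integrality of $(\deg f-1)\nu_f(\{\cdot\})$ from Theorem \ref{th:weight}(i), one can quantify the growth of $\Crucial_f(\cS)$ relative to $\rho(\cS,\cS_{\can})$ along a path from $\MinResLoc_f$ to $\cS$ and exploit the negative sign of $\tfrac{3-\deg f}{2}\rho(\cS,\cS_{\can})$ (for $\deg f\ge 3$) together with the tree-mass balance supplied by $\nu_f$ to absorb the positive contribution of $(\deg f-1)\Crucial_f(\cS)$.
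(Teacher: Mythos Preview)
Your starting identity is correct and is precisely what the paper uses (its equation~\eqref{eq:original}). The overall strategy for \eqref{eq:diamminresloc}---bound $\Crucial_f(\cS)\le 0$ on the minimum locus, then bound $I(\cS)$ by $-\log|\Res F|$---is also the paper's. However, your route to the bound on $I(\cS)$ has a genuine gap. You pass through the intermediate inequality
\[
\sum_{\cS_j\in f^{-1}(\cS_{\can})}\deg_{\cS_j}(f)\,\rho(\cS_{\can},\cS_j)\;\le\;-\log|\Res F|,
\]
appealing to an unspecified ``direct potential-theoretic expansion of $\ordRes_f$ at $\cS_{\can}$ in terms of tree-distances to $f^{-1}(\cS_{\can})$.'' No such expansion appears in this paper, and the claim is not obvious (the left-hand side is the supremum of your pointwise upper bound for $I$, which need not be dominated by $-\log|\Res F|$ without further work). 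The paper avoids this entirely: by~\eqref{eq:potentialminimal} one has $I(\cS)=-T_F(\cS)$, and the elimination-theoretic bound $\|F(p)\|\ge|\Res F|\cdot\|p\|^{d}$ (the paper's~\eqref{eq:homogeneouslower}) gives $T_F\ge\log|\Res F|$ pointwise, hence $I(\cS)\le-\log|\Res F|$ directly.

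For \eqref{eq:diamcrucial}, your rearranged identity is correct, and you have correctly identified that the issue is controlling $(d-1)\Crucial_f(\cS)+\tfrac{3-d}{2}\rho(\cS,\cS_{\can})$ on $\Gamma_{\supp\nu_f}\setminus\MinResLoc_f$. But your outline---integrate along a path \emph{from $\MinResLoc_f$} to $\cS$ and use the barycentric description---does not close: such a path gives growth of $\Crucial_f$ in terms of $\rho(\cS,\MinResLoc_f)$, not $\rho(\cS,\cS_{\can})$, and there is no a~priori comparison between these. The paper instead integrates $\Crucial_f$ along the segment $[\cS_{\can},\cS]$ and proves the uniform slope bound
\[
\sup_{\cS'\in[\cS_{\can},\cS)}\rd_{\overrightarrow{\cS'\cS}}\Crucial_f\;\le\;\tfrac{1}{2}-\tfrac{1}{d-1}
\]
on $\Gamma_{\supp\nu_f}$ (its claim~\eqref{eq:slope}). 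This is obtained in two pieces: on $[\cS_{\can},r_{\sP^1,\Gamma_{\supp\nu_f}}(\cS_{\can})]$ the slope is $\le-\tfrac12$ by~\eqref{eq:slopeoutside}; inside the crucial tree, choose an endpoint $\cS''$ of $\Gamma_{\supp\nu_f}$ beyond $\cS$, use convexity to bound the slope toward $\cS$ by $-\rd_{\overrightarrow{\cS''\cdot}}\Crucial_f$, and use~\eqref{eq:slopeinside} plus the integrality $\nu_f(\{\cS''\})\ge\tfrac{1}{d-1}$ to get $\rd_{\overrightarrow{\cS''\cdot}}\Crucial_f\ge -\tfrac12+\tfrac{1}{d-1}$. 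Plugging this slope bound into your identity yields exactly $(d-1)\Crucial_f(\cS)+\tfrac{3-d}{2}\rho(\cS,\cS_{\can})\le 0$, which is what you need. You named the right ingredients (slope formula, integrality, convexity) but not this specific chain, and the path you chose would not have produced a bound in terms of $\rho(\cS,\cS_{\can})$.
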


Regarding \eqref{eq:diamcrucial}, no (non-trivial and effective) 
bounds on
$\sup_{\Gamma_f}\rho(\cdot,\cS_{\can})$
seem to have been known unless the $\Gamma_{\FR}$-crucial tree 
$\Gamma_{\supp(\nu_{f,\Gamma_{\FR}})}$ of $f$
($=$ the $f$-crucial tree $\Gamma_f$) is a singleton.
We 
note that $\supp(\nu_{f,\Gamma_{\FR}})=\supp(\nu_f)$, 
that $\#\supp(\nu_{f,\Gamma_{\FR}})=1$ if $d=2$ (see \eqref{eq:weight}), 
and that
if $\#\supp(\nu_{f,\Gamma_{\FR}})=1$, 
$\Gamma_{\supp(\nu_{f,\Gamma_{\FR}})}=\Crucial_f^{-1}(\min_{\sH^1}\Crucial_f)$.
Hence when $d=2$,
\eqref{eq:diamcrucial} still applies and improves Rumely's radius estimate for
$\MinResLoc_f$ mentioned above.

\subsection{Application to dynamics of $f$ on $\sP^1$}\label{sec:dynamics}

As an application of our {\itshape exact} computation 
of the $\Gamma$-curvatures in Theorem \ref{th:defining}(ii), 
we establish
the following {\itshape quantitative equidistribution} 
of the sequence $(\nu_{f^n})_n$ of $f^n$-crucial measures
towards the $f$-{\itshape equilibrium $($or canonical$)$ measure} 
$\mu_f$ on $\sP^1$, which is by definition (or characterized as)
the unique probability Radon measure $\nu$ on $\sP^1$ such that 
$f^*\nu=d\cdot \nu$ on $\sP^1$ and that $\nu(E(f))=0$, 
where $E(f):=\{a\in\bP^1:\#\bigcup_{n\in\bN}f^{-n}(a)<\infty\}$
is the exceptional set of $f$.

\begin{mainth}\label{th:quantitative}
Let $f\in K(z)$ be of degree $d>1$. Then 
for every continuous test function $\phi$ on $\sP^1$ such that
$\phi|\Gamma$ is continuous on $(\Gamma,\rho)$ for a finite tree $\Gamma$ in
$\sH^1$ and that $\phi=(r_{\sP^1,\Gamma})^*\phi$ on $\sP^1$,
every $n\in\bN$, and every $\cS_0\in\sP^1$, 
setting $\Gamma_n':=r_{\sP^1,\Gamma}(\Gamma_{f^n,\FR})$, we have
\begin{multline}
\left|\int_{\sP^1}\phi\rd(\nu_{f^n}-\mu_f)\right|
\le\frac{2\bigl(C_{\cS_0,f}/(d-1)+\sup_{\Gamma}\rho(\cdot,\cS_0)\bigr)}{d^n-1}
\cdot|\Delta\phi|(\sP^1)\\
+\frac{2\cdot\#(\{\text{end points of }\Gamma\}\setminus\Gamma_n')}{d^n-1}\cdot\sup_{\sP^1}|\phi|,\label{eq:quantitative}
\end{multline}
where $C_{\cS_0,f}:=\sup_{\cS\in\sP^1}\int_{\sP^1}\rho(\cS_0,\cS\wedge_{\cS_0}\cdot)\rd(f^*\delta_{\cS_0})<\infty$.
\end{mainth}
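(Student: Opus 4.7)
The strategy is to compare the explicit representation of $(d^n-1)\nu_{f^n}$ given by \eqref{eq:formulameasure} applied to $f^n$ with an analogous representation of $(d^n-1)\mu_f$ derived from the invariance $(f^n)^*\mu_f=d^n\mu_f$, and then to bound the difference against $\phi$ by integration by parts on $(\sP^1,\rho)$. Setting $u_n(\cS):=\rho(\cS,f^n(\cS)\wedge_{\cS_0}\cS)$ and applying \eqref{eq:formulameasure} to $f^n$ (of degree $d^n$) with base point $\cS_0$ yields
\[
(d^n-1)\nu_{f^n}=(\iota_{\Gamma_{f^n,\FR},\sP^1})_*\Delta_{\Gamma_{f^n,\FR}}(u_n|\Gamma_{f^n,\FR})+(r_{\sP^1,\Gamma_{f^n,\FR}})_*\bigl((f^n)^*\delta_{\cS_0}-\delta_{\cS_0}\bigr).
\]

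For $\mu_f$ I would construct a continuous potential $g$ on $\sP^1$ satisfying $\mu_f=\delta_{\cS_0}+\Delta g$ and $\sup_{\sP^1}|g|\le C_{\cS_0,f}/(d-1)$. Starting from the auxiliary function $\psi(\cS):=\int_{\sP^1}\rho(\cS_0,\cS\wedge_{\cS_0}\cdot)\rd(f^*\delta_{\cS_0})$, which by definition satisfies $\sup|\psi|\le C_{\cS_0,f}$ and whose Laplacian equals $f^*\delta_{\cS_0}-d\delta_{\cS_0}$, one checks that the series $g:=\sum_{k\ge 0}\psi\circ f^k/d^{k+1}$ converges absolutely with $\sup|g|\le C_{\cS_0,f}/(d-1)$ and solves $\Delta g=\mu_f-\delta_{\cS_0}$ (using the formal telescoping $\mu_f=\lim_{N\to\infty}(f^N)^*\delta_{\cS_0}/d^N$). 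Applying $(f^n)^*$ to $\mu_f=\delta_{\cS_0}+\Delta g$ gives
\[
(d^n-1)\mu_f=\bigl((f^n)^*\delta_{\cS_0}-\delta_{\cS_0}\bigr)+\Delta(g\circ f^n-g),
\]
with $\sup|g\circ f^n-g|\le 2C_{\cS_0,f}/(d-1)$.

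Subtracting decomposes $(d^n-1)(\nu_{f^n}-\mu_f)$ as a sum of a Laplacian part $(\iota_{\Gamma_{f^n,\FR}})_*\Delta_{\Gamma_{f^n,\FR}}(u_n|\Gamma_{f^n,\FR})-\Delta(g\circ f^n-g)$ and a retraction-defect part $\bigl[(r_{\sP^1,\Gamma_{f^n,\FR}})_*-\mathrm{Id}\bigr]\bigl((f^n)^*\delta_{\cS_0}-\delta_{\cS_0}\bigr)$. Integrating $\phi$ against the Laplacian part and integrating by parts on $(\sP^1,\rho)$ converts it to $\int(u_n\circ r_{\sP^1,\Gamma_{f^n,\FR}}-(g\circ f^n-g))\rd\Delta\phi$; bounding the first summand on $\Gamma$ via $u_n\le\rho(\cdot,\cS_0)$ together with the $1$-Lipschitz property of the retraction, and the second by $\sup|g\circ f^n-g|$, produces the claimed $2(C_{\cS_0,f}/(d-1)+\sup_\Gamma\rho(\cdot,\cS_0))\cdot|\Delta\phi|(\sP^1)$ contribution. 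For the retraction-defect part, $\int\phi\,\rd[\cdots]=\int(\phi\circ r_{\sP^1,\Gamma_{f^n,\FR}}-\phi)\rd((f^n)^*\delta_{\cS_0}-\delta_{\cS_0})$, and using $\phi=\phi\circ r_{\sP^1,\Gamma}$ one sees that the integrand is supported in components of $\sP^1\setminus\Gamma_{f^n,\FR}$ reaching past an end point of $\Gamma$ absent from $r_{\sP^1,\Gamma}(\Gamma_{f^n,\FR})$.

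The main technical obstacle is to bound the retraction-defect part by $2\cdot\#(\{\text{end points of }\Gamma\}\setminus r_{\sP^1,\Gamma}(\Gamma_{f^n,\FR}))\cdot\sup_{\sP^1}|\phi|$ uniformly in $n$, despite $(f^n)^*\delta_{\cS_0}$ carrying total mass $d^n$. The key observation to be developed is that on each such ``bad'' component the function $\phi\circ r_{\sP^1,\Gamma_{f^n,\FR}}-\phi$ is piecewise constant with only finitely many values; after signed cancellation against $(f^n)^*\delta_{\cS_0}-\delta_{\cS_0}$, the integral reduces to at most $2\sup|\phi|$ per bad end point via the local geometry of $\Gamma$ and $\Gamma_{f^n,\FR}$ in that component, yielding the required bound.
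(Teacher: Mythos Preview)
Your decomposition into a Laplacian part and a retraction-defect part is natural, and the treatment of the Laplacian part is essentially sound (when $\Gamma\cap\Gamma_{f^n,\FR}\ne\emptyset$ one has $r_{\sP^1,\Gamma_{f^n,\FR}}(\Gamma)\subset\Gamma$, and in the disjoint case $u_n\circ r_{\sP^1,\Gamma_{f^n,\FR}}$ is constant on $\Gamma$ and integrates to zero against $\Delta\phi$).

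The genuine gap is the retraction-defect part. Your proposed mechanism cannot work: the integrand $\phi\circ r_{\sP^1,\Gamma_{f^n,\FR}}-\phi$ is indeed bounded by $2\sup|\phi|$ and supported in the bad components of $\sP^1\setminus\Gamma_{f^n,\FR}$, but the measure $(f^n)^*\delta_{\cS_0}$ carries total mass $d^n$, and nothing prevents a single bad component from carrying mass of order $d^n$ rather than $O(1)$. The ``signed cancellation'' against $-\delta_{\cS_0}$ removes at most one unit of mass. Piecewise constancy of the integrand is irrelevant here; what is missing is control of the \emph{measure} in the bad components, and no such control is available for $(f^n)^*\delta_{\cS_0}$ alone.

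The paper sidesteps this by never separating $u_n$ from the pullback measure. It works with $\Crucial_{f^n}|\Gamma$ directly: applying $\Delta_\Gamma$ to the difference between $\Crucial_{f^n}$ and its $\mu_f$-based limit yields $(r_{\sP^1,\Gamma})_*(\nu_{f^n}-\mu_f)$ plus, for each component $\Gamma'$ of $\Gamma\setminus r_{\sP^1,\Gamma}(\Gamma_{f^n,\FR})$, the signed measure $\nu_{f^n,\Gamma'}-(r_{\sP^1,\Gamma'})_*\delta_{\cS_{n,\Gamma'}}$. The point is that $\nu_{f^n,\Gamma'}$ packages both $\Delta_{\Gamma'}u_n$ and the retracted pullback together, and its total variation is bounded by $2/(d^n-1)$ times the number of end points of $\Gamma'$ away from the attachment point; this bound (the paper's Lemma~\ref{th:outsidebound}) relies essentially on the explicit case-by-case computation of Theorem~\ref{th:defining}(ii) for subtrees lying outside $\Gamma_{f^n,\FR}$. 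That structural input is what your approach is missing.
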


\begin{remark}
In particular, also using, e.g., \cite[(B) in Proposition 5.4]{BR10}$)$, 
\begin{gather*}
 \lim_{n\to\infty}\nu_{f^n}=\mu_f\quad\text{weakly on }\sP^1
\end{gather*}
(\cite[Theorem 2]{Jacobs17}).
As an order estimate, \eqref{eq:quantitative} (for $\cS_0=\cS_{\can}$)
was due to Jacobs \cite[Theorem 5]{Jacobs17}.
In the proof of \cite[Theorem 5]{Jacobs17},
Rumely's (rather involved) {\itshape second persistence lemma}
\cite[Lemma 9.5]{Rumely14} was also invoked.
Our argument replaces it with the latter half of Theorem \ref{th:defining}(iii).
\end{remark}

\subsection{Organization of the paper} 
In Section \ref{sec:background}, we recall background concerning
the tree structure 
on $\sP^1$, the canonical action on $\sP^1$ of $h\in K(z)$, and
Rivera-Letelier's decomposition of $\deg_{\cS}(h)$
(see Theorem \ref{th:local}). 
In Sections \ref{sec:geometric}, \ref{sec:convex}, 
and \ref{sec:weight}, we establish Theorems \ref{th:resultant}, 
\ref{th:convex}, and \ref{th:weight}, respectively, by computation
based on the global and explicit difference formula \eqref{eq:conjugate} 
for $\Crucial_f$, which is independent of (local) computation
for the (indirectly defined) $\ordRes_f$ in Rumely \cite{Rumely13,Rumely14}. 
In Section \ref{sec:explicit}, recalling also 
Rivera-Letelier's decomposition of $(h^*\delta_{\cS})(U_{\overrightarrow{v}})$
in terms of the directional local degree and the surplus local degree of $h\in K(z)$
on $U_{\overrightarrow{v}}$, $\overrightarrow{v}\in T_{\cS}\sP^1$,
(see Theorem \ref{th:semiglobal}) and Rumely's 
three identification lemmas (see Theorems \ref{th:1st3rd}, \ref{th:2nd},
\ref{th:3rd}), we prove Theorem \ref{th:defining}.
In Section \ref{sec:diam}, we show Theorem \ref{th:diam}
as an application of Theorems \ref{th:convex}, \ref{th:weight} 
(especially \eqref{eq:slopeinside}) and \ref{th:defining}.
In Section \ref{sec:quantitative},
we show Theorem \ref{th:quantitative}.

\section{Background}
\label{sec:background}

Recall that $\pi:K^2\setminus\{(0,0)\}\to\bP^1$ is the
canonical projection such that $\pi(0,1)=\infty$ and 
that $\pi(p_0,p_1)=p_1/p_0$
if $p_0\neq 0$. With the wedge product $(p_0,p_1)\wedge(q_0,q_1):=p_0q_1-p_1q_0$ 
and the maximum norm $\|(p_0,p_1)\|:=\max\{|p_0|,|p_1|\}$ on $K^2$, 
the {\itshape normalized} chordal metric $[z,w]_{\bP^1}$ on $\bP^1$ is
defined by
\begin{gather}
 (z,w)\mapsto [z,w]_{\bP^1}:=\frac{|p\wedge q|}{\|p\|\cdot\|q\|}
\quad\text{on }\bP^1\times\bP^1,\label{eq:chordaldef}
\end{gather}
where $p\in\pi^{-1}(z),q\in\pi^{-1}(w)$, so 
\begin{multline}
[z,\infty]_{\bP^1}=\frac{1}{\|(1,z)\|}\text{ on }K,\quad
|z-w|=\frac{[z,w]_{\bP^1}}{[z,\infty]_{\bP^1}[w,\infty]_{\bP^1}}
\text{ on }K\times K,\label{eq:chordalaffine}
\end{multline}
and $\PGL(2,\cO_K)$ acts on $(\bP^1,[z,w])$ isometrically.

\begin{definition}\label{th:chordal}
 The {\itshape chordal derivative} of $h\in K(z)$ is defined by
\begin{gather*}
  z\mapsto h^\#(z):=\lim_{w\to z}\frac{[h(w),h(z)]_{\bP^1}}{[w,z]_{\bP^1}}\quad\text{on }\bP^1. 
\end{gather*} 
\end{definition}

\begin{definition}[see, e.g., {\cite[\S2.3]{SilvermanDynamics}}]\label{th:reduction}
Let $\tilde{c}\in k$ be the class of $c\in\cO_K$ modulo $\mathfrak{m}_K$. 
The reduction $\tilde{a}$ modulo $\mathfrak{m}_K$
of a point $a=[p_0,p_1]\in\bP^1(K)$,
where $(p_0,p_1)\in K^2\setminus\{(0,0)\}$ is {\itshape minimal}
in that 
$\|(p_0,p_1)\|=1$, 
is defined by $[\tilde{p}_0,\tilde{p}_1]\in\bP^1(k)$, 
and the reduction $\tilde{h}$ modulo $\mathfrak{m}_K$ of
$h\in K(z)$ of degree $>0$ is defined by 
\begin{gather*}
\tilde{h}(\zeta):=
\frac{\tilde{H}_1(1,\zeta)/\GCD(\tilde{H}_0(1,\zeta),\tilde{H}_1(1,\zeta))}
{\tilde{H}_0(1,\zeta)/\GCD(\tilde{H}_0(1,\zeta),\tilde{H}_1(1,\zeta))}\in k(\zeta),
\end{gather*} 
where $(H_0(z_0,z_1),H_1(z_0,z_1))$ is a {\itshape minimal} lift of $h$ 
(recall Section \ref{sec:intro}) and
we set $\tilde{P}(\zeta):=\sum_{j=0}^{\deg P}\tilde{c}_j\zeta^j\in k[\zeta]$
for every $P(z)=\sum_{j=0}^{\deg P}c_jz^j\in\cO_K[z]$.
\end{definition}
For every $h\in K(z)$ of degree $>0$ and
every $a\in\bP^1$, $\tilde{h}(\tilde{a})=\widetilde{h(a)}$.
For every $f\in K(z)$ of degree $>1$, $f$ has a potentially good reduction
(that is, $f^{-1}(\cS')=\{\cS'\}$ for some type II point $\cS'\in\sH^1$)
if and only if there is $ h\in\PGL(2,K)$ such that
$\deg(\widetilde{ h\circ f\circ h^{-1}})=\deg h$.

\subsection{Tangent spaces on $\sP^1$}\label{sec:tangent}

Recall the description of $\sP^1$ in Section \ref{sec:intro}.
For every subtree $\Gamma$ in $\sP^1$ and
every point $\cS\in\Gamma$, the {\itshape tangent 
space} $T_{\cS}\Gamma$ at $\cS$ is the set of all {\itshape germs}, 
which are called {\itshape directions}, $\overrightarrow{\cS\cS'}$ 
of a non-empty left half open interval $(\cS,\cS']$ in $\Gamma$; 
$\overrightarrow{\cS\cS}$ is
undefined and, for every $\cS\in\Gamma$, $v_{\Gamma}(\cS):=\#T_{\cS}\Gamma\in\bN\cup\{0,+\infty\}$ is called the {\em valence} of $\Gamma$ at $\cS$.
For every $\cS\in\sP^1$ and every direction
$\overrightarrow{v}\in T_{\cS}\sP^1$, set 
\begin{gather*}
 U_{\overrightarrow{v}}=U_{\cS,\overrightarrow{v}}
 :=\{\cS'\in\sP^1:\overrightarrow{\cS\cS'}=\overrightarrow{v} 
 \}.
\end{gather*}
The collection $\{U_{\cS,\overrightarrow{v}}:\cS\in\sP^1\text{ and }\overrightarrow{v}\in T_{\cS}\sP^1\}$ 
is a quasi-open basis of
the Gel'fand (i.e., weak or pointwise-convergence) topology on $\sP^1$, so in particular
each $ U_{\cS,\overrightarrow{v}}$ is a component of $\sP^1\setminus\{\cS\}$
and $\bP^1$ is dense in $\sP^1$.
For every type I or IV point $\cS\in\sP^1$, $\#T_{\cS}\sP^1=1<2$ so
$\cS$ is an end point of $\sP^1$, and
for every type III point $\cS\in\sP^1$, $\#T_{\cS}\sP^1=2$.
For every type II point $\cS'\in\sP^1$, 
fixing such $ h\in\PGL(2,K)$ that $ h(\cS')=\cS_{\can}$,
$T_{\cS'}\sP^1$ can be identified with $\bP^1(k)$ in such a way that 
$\bP^1(k)\ni\tilde{a}\mapsto\overrightarrow{v}_a^{( h)}\in T_{\cS'}\sP^1$
is bijective, where $\overrightarrow{v}_a^{( h)}$ is the unique
direction $\overrightarrow{v}\in T_{\cS'}\sP^1$ so
that $h(U_{\overrightarrow{v}})=U_{\overrightarrow{\cS_{\can}a}}$ 
(see, e.g., Baker--Rumely \cite[\S2.6]{BR10},
Jonsson \cite[\S2.1.1 and \S3.5]{Jonsson15}).

\subsection{Generalized Hsia kernels, the hyperbolic space, and Laplacians}\label{sec:laplacian}
The {\itshape generalized Hsia kernel} $[\cS,\cS']_{\can}$
on $\sP^1$ {\itshape with respect to} $\cS_{\can}$
is the unique upper semicontinuous and separately continuous
extension to $\sP^1(\times\sP^1)$
of the normalized chordal metric $[z,w]_{\bP^1}$ on $\bP^1(\times\bP^1)$
(see \cite[\S 4.3]{BR10}). We omit the precise definition
of the {\itshape hyperbolic $($or path distance$)$ metric} $\rho$ on $\sH^1$,
which satisfies
$\rho(\cS,\cS')=\log(\diam(\cS')/\diam(\cS))$ 
for every $\cS\in\sH^1$ and every $\cS'\in[\cS,\infty]\cap\sH^1$,
and is related to the above kernel function $[\cS,\cS']_{\can}$ by the formula
\begin{gather}
 -\log[\cS,\cS']_{\can}=\rho(\cS_{\can},\cS\wedge_{\can}\cS')
\quad\text{on }\sH^1\times\sH^1\label{eq:GromovGauss}
\end{gather}
(see Baker--Rumely \cite[\S 2.7]{BR10},
Favre--Rivera-Letelier \cite[\S 3.4]{FR06}); 
fixing $\cS'\in\sH^1$, $\cS\mapsto\log[\cS,\cS']_{\can}$ is
continuous on $(\sH^1,\rho)$, locally constant on $\sP^1$
except on $[\cS_{\can},\cS']$, and affine on $([\cS_{\can},\cS'],\rho)$
(see the more general \eqref{eq:Gromovevery} 
below).

The linear fractional transformations group $\PGL(2,K)$ 
acts on $(\sH^1,\rho)$ isometrically.
The set of all type II points in $\sP^1$ is dense 
in $(\sH^1,\rho)$ (hence also dense in $\sP^1$). Extending $\rho$ to a {\itshape generalized}
metric $\tilde{\rho}$ on $\sP^1$ 
by setting $\tilde{\rho}(\cS,\cS')=+\infty$ unless
both $\cS$ and $\cS'$ are in $\sH^1$, 
the metric space $(\sH^1,\rho)$ 
coincides with the {\itshape hyperbolic space} in $(\sP^1,\tilde{\rho})$ 
in the sense of Jonsson \cite[\S 2.2.1]{Jonsson15},
and $\sP^1$ coincides with the direct limit of the directed set of
all finite trees in $\sH^1$, partially ordered by inclusions.
The {\itshape generalized Hsia kernel} 
on $\sP^1$ {\itshape with respect to} a point $\cS_0\in\sP^1$ is defined by
\begin{gather}
 [\cS,\cS']_{\cS_0}:=\frac{[\cS,\cS']_{\can}}{[\cS,\cS_0]_{\can}[\cS',\cS_0]_{\can}}\quad\text{on }\sP^1\times\sP^1
\label{eq:Hsiaevery}
\end{gather}
(see Baker--Rumely \cite[\S 4.4]{BR10}),
where we adopt
the convention that $1/0=0/0^2=+\infty$; for every $\cS_0\in\sH^1$,
\begin{gather}
\log[\cS,\cS']_{\cS_0}
=-\tilde{\rho}(\cS_0,\cS\wedge_{\cS_0}\cS')-\log[\cS_0,\cS_0]_{\can}
\quad\text{on }\sP^1\times\sP^1
\tag{\ref{eq:GromovGauss}$'$}
\label{eq:Gromovevery}
\end{gather}
(see also \eqref{eq:fundamental} below).

For every point $\cS\in\sH^1$ and every direction 
$\overrightarrow{v}\in T_{\cS}\sP^1$, 
let $\rd_{\overrightarrow{v}}=(\rd_{\overrightarrow{v}})_{\cS}$ be the (distributional) directional derivation
operator at $\cS$ with respect to $\overrightarrow{v}$
on the space of continuous functions $\phi$ on $(\sH^1,\rho)$;
if $\phi$ is affine on a non-trivial interval $([\cS_0,\cS_1],\rho)$, then
\begin{gather}
 \rd_{\overrightarrow{\cS_0\cS_1}}\phi=\lim_{\cS\to\cS_0\text{ in }((\cS_0,\cS_1],\rho)}\frac{\phi(\cS)-\phi(\cS_0)}{\rho(\cS,\cS_0)}.\label{eq:derivation}
\end{gather}
For every subtree $\Gamma$ in $\sP^1$ intersecting $\sH^1$, let $\Delta_{\Gamma}$ be
the (distributional) Laplacian on $\Gamma$ and set $\Delta:=\Delta_{\sP^1}$,
so that for any $\cS_0,\cS_1\in\sP^1$,
\begin{gather}
 \Delta\bigl(-\tilde{\rho}(\cS_0,\cdot\wedge_{\cS_0}\cS_1)\bigr)=\delta_{\cS_1}-\delta_{\cS_0}\quad\text{on }\sP^1\label{eq:fundamental}
\end{gather}
and that for any subtrees $\Gamma,\Gamma'$ in $\sP^1$ 
satisfying $\Gamma'\subset\Gamma$ and $\Gamma'\cap\sH^1\neq\emptyset$,
we have
\begin{gather}
 \Delta_{\Gamma'}\circ(\iota_{\Gamma',\Gamma})^*=(r_{\Gamma,\Gamma'})_*\circ\Delta_{\Gamma}
\quad\text{and}\quad\Delta_{\Gamma}\circ(r_{\Gamma,\Gamma'})^*=(\iota_{\Gamma',\Gamma})_*\circ\Delta_{\Gamma'}
\label{eq:laplacian}
\end{gather}
on the space of all functions of {\itshape bounded derivative variation} (BDV) 
on $(\Gamma,\tilde{\rho})$
and on that of those on $(\Gamma',\tilde{\rho})$
(see \cite[\S 5.4]{BR10} for the class BDV), respectively; if 
$\Gamma$ is a finite subtree in $\sP^1$, then 
for every $\cS\in\Gamma\cap\sH^1$ and every continuous and piecewise affine
function $\phi$ on $(\Gamma,\rho)$, we have
\begin{gather}
 (\Delta_{\Gamma}\phi)(\{\cS\})
 =\sum_{\overrightarrow{v}\in T_{\cS}\Gamma}\rd_{\overrightarrow{v}}\phi.\label{eq:outer}  
\end{gather}
Moreover, for every $h\in K(z)$ of degree $>0$, the following functoriality
\begin{gather}
 \Delta\circ h^*=h^*\circ\Delta\label{eq:functorial}
\end{gather}
holds (for more details, see the following references;
the construction of $\Delta$ is by Favre--Jonsson \cite{FJbook},
Baker--Rumely \cite[\S 5]{BR10}, Favre--Rivera-Letelier \cite[\S 4.1]{FR06},
and Thuillier \cite{ThuillierThesis}; 
see also Jonsson \cite[\S 2.5]{Jonsson15}.
In \cite{BR10} the opposite sign convention on $\Delta$ was adopted).

\subsection{The actions on the space of Radon measures and the tangent spaces}\label{sec:directional}
Let $h\in K(z)$ be of degree $d_0>0$.
Recall the description of the canonical
action on $\sP^1$ of $h$ and the local degree function $\deg_{\,\cdot}(h):\sP^1\to\{1,\ldots,\deg h\}$ of (the extended) $h$ in Section \ref{sec:intro}. 
The (extended) local degree function $\deg_{\,\cdot}(h)$ of $h$ on $\sP^1$ induces
a pullback action $h^*$ of $h$ on the space of Radon measures $\nu$ on $\sP^1$
such that for every $\cS\in\sP^1$, when $\nu=\delta_{\cS}$, 
\begin{gather*}
 h^*\delta_{\cS}
 =\sum_{\cS'\in h^{-1}(\cS)}(\deg_{\cS'}(h))\cdot\delta_{\cS'}\quad\text{on }\sP^1
\end{gather*}
(the construction of $\cS\mapsto\deg_{\cS}(h)$ on $\sP^1$ is by Baker--Rumely \cite[\S9]{BR10},
Favre--Rivera-Letelier \cite[\S2.2]{FR09}, 
and Thuillier \cite{ThuillierThesis}; see also Jonsson \cite[\S 4.6]{Jonsson15}).

For every $\cS\in\sP^1$, there is a {\itshape tangent map} 
$h_*=(h_*)_{\cS}:T_{\cS}\sP^1\to T_{h(\cS)}\sP^1$ of $h$ at $\cS$ 
such that for every direction 
$\overrightarrow{v}=\overrightarrow{\cS\cS'}\in T_{\cS}\sP^1$, 
diminishing $[\cS,\cS']$ if necessary,
$h$ maps $[\cS,\cS']$ homeomorphically onto $[h(\cS),h(\cS')]$ and
\begin{gather*}
 h_*(\overrightarrow{v})=\overrightarrow{h(\cS)h(\cS')};
\end{gather*}
see, e.g., Jonsson \cite[\S 2.6, \S 4.5]{Jonsson15} for the precise definition
and more details about $h_*$.

\begin{definition}[the directional local degree]\label{th:directionaldegree}
Let $h\in K(z)$ be of degree $>0$. 

For every type II point $\cS\in\sP^1$,
 fixing $\ell_1\in\PGL(2,K)$ sending $\cS$ to $\cS_{\can}$ and
 $\ell_2\in\PGL(2,K)$ sending (the type II point) $h(\cS)$ to $\cS_{\can}$,
 the tangent map $h_*=(h_*)_{\cS}$
 can be identified with the action on $\bP^1(k)$ of the reduction
 $\widetilde{\ell_2\circ h\circ\ell_1^{-1}}
 \in k(\zeta)$ of $\ell_2\circ h\circ\ell_1^{-1}$
 (indeed $\deg(\widetilde{\ell_2\circ h\circ\ell_1^{-1}})=\deg_{\cS}(h)$) so that
\begin{gather*}
 h_*(\overrightarrow{v})
 =\overrightarrow{v}_{\ell_2\circ h\circ\ell_1^{-1}(a)}^{(\ell_2)}
 \quad\text{when } a\in\bP^1\text{ and } 
 \overrightarrow{v}=\overrightarrow{v}_a^{(\ell_1)}\in T_{\cS}\sP^1,
\end{gather*} 
and then $m_{\overrightarrow{v}}(h)
:=\deg_{\tilde{a}}\bigl(\widetilde{\ell_2\circ h\circ\ell_1^{-1}}\bigr)\in
 \{1,\ldots,\deg_{\cS}(h)\}$ is well-defined. 
 For every type I, III, or IV point $\cS\in\sP^1$
 and every direction $\overrightarrow{v}\in T_{\cS}\sP^1$,
 we also set $m_{\overrightarrow{v}}(h):=\deg_{\cS}(h)$.

 In any case, we call the $m_{\overrightarrow{v}}(h)$
 the {\itshape directional local degree} of $h$ on $U_{\overrightarrow{v}}$.
\end{definition}

Let us recall the following {\itshape local}
results 
\cite[Proposition 3.1]{Juan05} and
\cite[Lemmas 5.3 and 5.4]{Juan03Compositio}
by Rivera-Letelier 
(cf.\ a {\itshape semiglobal} result in Theorem \ref{th:semiglobal} below
also by Rivera-Letelier).
For a proof of Theorem \ref{th:local}
based on an algebraic definition or characterization of $\deg_{\cS}(h)$,
see Jonsson \cite[\S4.6]{Jonsson15}. 

\begin{theorem}[{\cite[Proposition 3.1]{Juan05}}]\label{th:local}
 Let $h\in K(z)$ be of degree $>0$. Then
 for every $\cS\in\sP^1$ and every direction
 $\overrightarrow{v}=\overrightarrow{\cS\cS'}\in T_{\cS}\sP^1$,
 diminishing $[\cS,\cS']$ if necessary,
 not only $h$ maps $[\cS,\cS']$ homeomorphically onto $[h(\cS),h(\cS')]$ but also,
 for any $\cS_1,\cS_2\in[\cS,\cS']$, 
 \begin{gather}
\tilde{\rho}\bigl(h(\cS_1),h(\cS_2)\bigr)
=m_{\overrightarrow{v}}(h)\cdot\tilde{\rho}(\cS_1,\cS_2);\label{eq:expansion}
 \end{gather}
in particular, the action of $h$ on $(\sH^1,\rho)$
is $(\deg h)$-Lipschitz continuous, and for every {\em fixed}
point $\cS\in\sH^1$ of $h$
and every direction $\overrightarrow{\cS\cS'}\in T_{\cS}\sP^1$ {\em fixed} by $h_*$,
diminishing $[\cS,\cS']$ if necessary, we have $[\cS,\cS']\subset[\cS,h(\cS')]$.

 Moreover, for every $\cS\in\sP^1$ and every
 $\overrightarrow{w}\in T_{h(\cS)}\sP^1$,
 \begin{gather}
 \sum_{\overrightarrow{v}\in T_{\cS}\sP^1:h_*(\overrightarrow{v})
  =\overrightarrow{w}}m_{\overrightarrow{v}}(h)
  =\deg_{\cS}(h).\label{eq:directionaldegree}
 \end{gather}
\end{theorem}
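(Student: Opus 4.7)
The plan is to reduce to a normalized type II situation via $\PGL(2,K)$, analyze $h$ explicitly through its reduction $\tilde h$, and then handle the remaining types by interior approximation.

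Suppose first $\cS$ is of type II. Pick $\phi,\psi\in\PGL(2,K)$ sending $\cS$ and $h(\cS)$ respectively to $\cS_{\can}$; since these act as tree isomorphisms preserving $\rho$, I replace $h$ by $g:=\psi\circ h\circ\phi^{-1}$ and assume $\cS=h(\cS)=\cS_{\can}$. Taking a minimal lift of $g$, the reduction $\tilde g\in k(\zeta)$ has degree $\deg_{\cS_{\can}}(g)=\deg_{\cS}(h)$. For $\overrightarrow{v}=\overrightarrow{v}_a^{(\Id)}\in T_{\cS_{\can}}\sP^1$ corresponding to $\tilde a\in\bP^1(k)$, I choose an affine chart in which $a$ is finite; the multiplicative-seminorm description $|g|_{\cB(a,r)}=\sup_{|z-a|\le r}|g(z)|$ yields, for $r\in(0,1]$ close enough to $1$, an expansion $g(z)=g(a)+c(z-a)^m+\text{higher order}$ whose leading term dominates by the factorization of $\tilde g$ at $\tilde a$, where $m:=\deg_{\tilde a}\tilde g=m_{\overrightarrow{v}}(g)$. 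This simultaneously identifies $g_\ast(\overrightarrow{v})=\overrightarrow{v}_{\tilde g(\tilde a)}^{(\Id)}$ and gives the identity \eqref{eq:expansion} on the short segment parameterized by $r\in[r_0,1]$ (including type III values $r\notin|K^\ast|$, through the same seminorm definition).

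For $\cS$ of type III or IV, I pick a type II point $\cS_0$ in the interior of $(\cS,\cS']$; the preceding applied at $\cS_0$ for the two directions toward $\cS$ and toward $\cS'$ yields \eqref{eq:expansion} on two initial subsegments sharing the common multiplier $\deg_{\cS_0}(h)=\deg_{\cS}(h)=m_{\overrightarrow{v}}(h)$ (by Definition \ref{th:directionaldegree}). For $\cS\in\bP^1$, I approximate $\cS$ by a sequence of type II points along the unique direction and pass \eqref{eq:expansion} to the limit via the continuity of $h$ on $\sP^1$ and of $\tilde\rho$. The homeomorphism property is then the strict monotonicity of the expansion $r\mapsto r^m$; the Lipschitz bound follows by integrating $m_{\overrightarrow{v}}(h)\le\deg h$ along any path in $\sH^1$; and the fixed-point statement follows from $m\ge 1$. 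Finally, \eqref{eq:directionaldegree} in the normalized case is the classical $\sum_\zeta\deg_\zeta\tilde g=\deg\tilde g$ identity applied to the fibre $\tilde g^{-1}(\tilde b)$ corresponding to $\overrightarrow{w}$, and is trivial at the remaining types because $h_\ast$ is then injective with $m_{\overrightarrow{v}}(h)=\deg_{\cS}(h)$ by definition.

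The main obstacle is the type II seminorm computation: it requires choosing an affine coordinate adapted to the direction $\overrightarrow{v}$ (in particular handling $a=\infty$ by swapping charts) and verifying that the leading exponent $m$ persists uniformly across a full segment of $r$'s rather than only at $r=1$, which is where the ``diminishing $[\cS,\cS']$ if necessary'' hedge in the statement enters. Once this local model is in hand, the rest follows from tree topology, Gel'fand density of type II points, and standard degree arithmetic in $k(\zeta)$.
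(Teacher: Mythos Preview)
The paper does not prove this theorem; it is quoted as background from Rivera-Letelier \cite[Proposition~3.1]{Juan05}, with a pointer to Jonsson \cite[\S4.6]{Jonsson15} for an alternative algebraic argument. So there is no ``paper's own proof'' to compare against. Your outline follows the standard route (normalize to $\cS_{\can}$ via $\PGL(2,K)$, read off the expansion from the reduction $\tilde g$, then propagate to other types), and the type~II core is correct in spirit.

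There is, however, a genuine gap in your treatment of the type~III/IV case. You pick a type~II point $\cS_0\in(\cS,\cS']$ and assert that the two directional slopes at $\cS_0$ ``share the common multiplier $\deg_{\cS_0}(h)=\deg_{\cS}(h)=m_{\overrightarrow v}(h)$.'' Neither equality is justified: at a type~II point the directional degrees $m_{\overrightarrow{\cS_0\cS}}(h)$ and $m_{\overrightarrow{\cS_0\cS'}}(h)$ need not coincide with each other or with $\deg_{\cS_0}(h)$, and $\deg_{\cS_0}(h)=\deg_{\cS}(h)$ requires an upper-semicontinuity argument together with a reason why it cannot drop. Moreover, the type~II result at $\cS_0$ only controls \emph{short} segments from $\cS_0$, which need not reach $\cS$; for type~IV $\cS$ this is a real obstruction, not a cosmetic one. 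The cleanest repair is: for type~III, run the same seminorm/Newton-polygon analysis directly (disks of irrational radius behave identically); for type~IV, use that the local degree there is characterized as the constant value of $\deg_{\cS'}(h)$ on a sufficiently small annular neighborhood (this is part of the construction you cite implicitly), which then forces the slope to be that constant along the approach to $\cS$. This is essentially what the references do; your sketch skips precisely this identification.
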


\begin{theorem}[{\cite[Lemmas 5.3 and 5.4]{Juan03Compositio} (see also \cite[Lemma 10.80]{BR10})}]\label{th:indifferent}
Let $h\in K(z)$ be of degree $>1$. Then for every
type {\em III} or {\em IV} point $\cS$ {\em fixed} by $h$, we have
$\deg_{\cS}(h)=1$. Moreover, for every
type {\em III} or {\em IV} point $\cS$ {\em fixed} by $h$ and
every direction $\overrightarrow{w}\in T_{\cS}\sP^1$, we have
$h_*(\overrightarrow{w})=\overrightarrow{w}$
$($so $m_{\overrightarrow{w}}(h)=\deg_{\cS}(h)=1)$.
\end{theorem}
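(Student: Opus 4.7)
My plan is to combine Rivera-Letelier's expansion formula \eqref{eq:expansion} in Theorem \ref{th:local} with the reduction-theoretic description of $\deg_{\cS}(h)$ at type II points (Definition \ref{th:directionaldegree}), approximating the type III or IV fixed point $\cS$ by type II points along an invariant interval.

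First I would reduce to the case that some $\overrightarrow{v}\in T_\cS\sP^1$ is fixed by $h_*$. For a type IV fixed point this is automatic since $\#T_\cS\sP^1 = 1$. For a type III fixed point, $\#T_\cS\sP^1 = 2$, and summing \eqref{eq:directionaldegree} at each of the two directions, together with the equality $m_{\overrightarrow{v}}(h) = \deg_\cS(h)$ for every $\overrightarrow{v} \in T_\cS\sP^1$ (from the definition of $m$ at non-type-II points), forces $h_*$ to be a bijection of the two-element set $T_\cS\sP^1$; replacing $h$ by $h^2$ if necessary, we may assume every $\overrightarrow{v}\in T_\cS\sP^1$ is $h_*$-fixed. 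For such $\overrightarrow{v} = \overrightarrow{\cS\cS'}$, \eqref{eq:expansion} applied after shrinking $[\cS,\cS']$ gives that $h$ maps $[\cS,\cS']$ homeomorphically onto $[\cS, h(\cS')]$ with $\tilde\rho$-expansion factor $d := \deg_\cS(h) = m_{\overrightarrow{v}}(h)$. I would then pick a sequence of type II points $\cS_n \in (\cS,\cS']$ with $\rho(\cS_n,\cS)\to 0$, observe that $h(\cS_n)\in(\cS,h(\cS')]$ satisfies $\rho(\cS,h(\cS_n)) = d\cdot\rho(\cS,\cS_n)$, and choose $\phi_n,\psi_n\in\PGL(2,K)$ sending $\cS_n,h(\cS_n)$ respectively to $\cS_{\can}$, so that $\deg_{\cS_n}(h) = \deg\bigl(\widetilde{\psi_n\circ h\circ\phi_n^{-1}}\bigr)$ by Definition \ref{th:directionaldegree}.

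The two tangent directions at $\cS_n$ along the interval $[\cS,\cS']$ are preserved by the tangent map at $\cS_n$ (since $h$ maps this interval homeomorphically onto an interval based at $\cS = h(\cS)$), and hence they yield two distinct fixed points of $\widetilde{\psi_n\circ h\circ\phi_n^{-1}}\in k(\zeta)$ on $\bP^1(k)$. Using the rigidity coming from $\diam\cS\notin|K^*|$ (type III) or from $\cS$ being represented by a nested sequence of disks in $K$ with no common point (type IV), I would conclude that $\widetilde{\psi_n\circ h\circ\phi_n^{-1}}$ must be a M\"obius transformation of $\bP^1(k)$ for $n$ sufficiently large, so $\deg_{\cS_n}(h) = 1$ eventually, and by upper semicontinuity of $\cS\mapsto\deg_\cS(h)$ we obtain $\deg_\cS(h) = 1$. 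The equality $m_{\overrightarrow{w}}(h) = 1$ is immediate from the definition of $m_{\overrightarrow{w}}$ at type III/IV points. Finally, a hypothetical transposition at a type III fixed point (possibly introduced by passing to $h^2$) would be excluded by an additional argument invoking the hypothesis $\deg h > 1$ together with the equality $\deg_\cS(h) = 1$ to obstruct a local involution structure of $h$ around $\cS$.

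The main obstacle is the rigidity step forcing $\widetilde{\psi_n\circ h\circ\phi_n^{-1}}$ to be M\"obius for large $n$: for type III this reduces to Rivera-Letelier's analysis of rational self-maps of annuli whose modulus does not lie in $\log|K^*|$ (such self-maps fixing the thin annular core act with degree one), while for type IV it rests on the ultrafilter description of $\cS$ together with the finiteness of the critical set of $h$, which prevents branching of $h$ from concentrating at a type IV point.
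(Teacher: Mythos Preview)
The paper does not prove this statement; Theorem~\ref{th:indifferent} is quoted as background from Rivera-Letelier \cite[Lemmas 5.3 and 5.4]{Juan03Compositio} (and \cite[Lemma 10.80]{BR10}) with no argument supplied. There is therefore nothing in the paper to compare your proposal against.

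On the proposal itself, two points deserve flagging. First, the appeal to upper semicontinuity of $\cS\mapsto\deg_{\cS}(h)$ goes the wrong way: if $\deg_{\cS_n}(h)=1$ along a sequence $\cS_n\to\cS$, upper semicontinuity only gives $1\le\deg_{\cS}(h)$, which is trivial. What you actually want is that the expansion factor along the interval $[\cS,\cS']$ is the same whether read off at $\cS$ (where it equals $m_{\overrightarrow{v}}(h)=\deg_{\cS}(h)$ by the type III/IV convention) or at an interior type~II point $\cS_n$ (where it equals $m_{\overrightarrow{v}}(h)|_{\cS_n}\le\deg_{\cS_n}(h)$); showing $\deg_{\cS_n}(h)=1$ then forces $\deg_{\cS}(h)=1$ directly, with no semicontinuity needed. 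Second, and more seriously, your ``rigidity step'' is not an argument but a restatement of the goal: you say the type~III case ``reduces to Rivera-Letelier's analysis of rational self-maps of annuli,'' which is precisely the content of the cited lemmas. Likewise, speaking of ``fixed points'' of $\widetilde{\psi_n\circ h\circ\phi_n^{-1}}$ is not meaningful when $h(\cS_n)\neq\cS_n$, since source and target are identified with $\bP^1(k)$ via different coordinate changes $\phi_n,\psi_n$. The substantive input in Rivera-Letelier's proof---that a rational map fixing an irrational annulus (type~III) or an end with empty intersection of disks (type~IV) must act with local degree one---requires genuine non-archimedean analysis not present in your outline.
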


\section{Proof of Theorem \ref{th:resultant}}
\label{sec:geometric}

Let $f\in K(z)$ be of degree $d>1$. 
We begin with the following observation;
for any $h,\ell\in\PGL(2,K)$ such that $h(\cS_{\can})=\ell(\cS_{\can})$,
we have $\iota:=h^{-1}\circ \ell\in\PGL(2,\cO_K)$ since
$\iota(\cS_{\can})=\cS_{\can}$. Fixing
a (minimal) lift $I\in\GL(2,\cO_K)$ of $\iota$,
$I^{-1}\circ(\text{a minimal lift of }h^{-1}\circ f\circ h)\circ I$
is a minimal lift of $\ell^{-1}\circ f\circ\ell$
since the norm $\|\cdot\|$ is invariant under
$I$, and then
\begin{align}
\notag  &|\Res(\text{a minimal lift of }\ell^{-1}\circ f\circ\ell)|\\
 &=|(\det I)^{d(d-1)}\cdot\Res(\text{a minimal lift of }h^{-1}\circ f\circ h)|
\label{eq:resultantDeMarco}\\
\notag&=|\Res(\text{a minimal lift of }h^{-1}\circ f\circ h)|.
\end{align}

{\bfseries (a)} Let us see the equality \eqref{eq:resultant} for each
$h\in\PGL(2,K)$. Set 
\begin{gather*}
 \cS=\cS_h:=h(\cS_{\can}) 
\end{gather*}
and fix $a\in K$ and $b\in K^*$ such that
$B_{\cS}={\{z\in K:|z-a|\le|b|\}}$. Then by the definition of $\diam\cS$,
the latter half of \eqref{eq:chordalaffine}, and \eqref{eq:Hsiaevery}, 
we compute
\begin{gather}
|b|=\diam\cS=
\begin{cases}
[\cS,\cS]_{\infty} =\dfrac{[\cS,\cS]_{\can}}{[\cS,\infty]_{\can}^2}\quad\text{and}\\
[\cS,a]_{\infty}
 =\dfrac{[\cS,a]_{\can}}{[\cS,\infty]_{\can}\cdot[a,\infty]_{\bP^1}}.
\end{cases}
\label{eq:diameter}
\end{gather}
Fix also a {\itshape minimal} lift $F=(F_0,F_1)$ of $f$. 
To simplify the computation, set 
\begin{gather*}
 h_{a,b}(z):=a+bz\in\PGL(2,K), 
\end{gather*}
so that $h_{a,b}(\cS_{\can})=\cS(:=h(\cS_{\can}))$, 
and fix a (not necessarily minimal) lift 
\begin{gather*}
 H_{a,b}(p_0,p_1):=(p_0,ap_0+bp_1)\in\GL(2,K) 
\end{gather*}
of $h_{a,b}$, so that 
$\det H_{a,b}=b$ and $H_{a,b}^{-1}(p_0,p_1)=b^{-1}(bp_0,p_1-ap_0)\in\GL(2,K)$.
We also note that for every $c\in K^*$,
\begin{gather}
 \Res(c\cdot H_{a,b}^{-1}\circ F\circ H_{a,b})
=c^{2d}\cdot b^{d(d-1)}\cdot\Res F;\label{eq:resultantcong} 
\end{gather}
$c$ will be chosen so that
$|c|^{-1}=\sup_{z\in\cO_K}\|H_{a,b}^{-1}\circ F\circ H_{a,b}(1,z)\|$,
or equivalently, that $c\cdot H_{a,b}^{-1}\circ F\circ H_{a,b}$ is a {\itshape minimal} lift of
$h_{a,b}^{-1}\circ f\circ h_{a,b}$.

Then we can compute ($2d(d-1)\times$) the right hand side of \eqref{eq:resultant} as
\begin{align}
\notag&-\log\frac{|\Res(\text{a minimal lift of }h^{-1}\circ f\circ h)|}{|\Res(\text{a minimal lift of }f)|}\\
\notag=&-d(d-1)\log|b|
+2d\cdot\log\sup_{z\in\cO_K}\|H_{a,b}^{-1}\circ F\circ H_{a,b}(1,z)\|\\
\notag=&-d(d-1)\log|b|+2d\cdot\log\max\left\{|F_0(1,\cdot)|_{B_{\cS}},
\frac{|F_1(1,\cdot)-aF_0(1,\cdot)|_{B_{\cS}}}{|b|}\right\}\\
\notag=&-d(d-1)\log[\cS,\cS]_{\can}+2d(d-1)\log[\cS,\infty]_{\can}\\
\notag&+2d\cdot\log\max\left\{[f(\cS),\infty]_{\can},
[\cS,\infty]_{\can}\frac{[f(\cS),a]_{\can}}{[\cS,a]_{\can}}
\right\}
\notag+2d\cdot\log\|F(1,\cS)\|\\
=&-d(d-1)\log[\cS,\cS]_{\can}
+2d\cdot\log\max\left\{\frac{[f(\cS),\infty]_{\can}}{[\cS,\infty]_{\can}},
\frac{[f(\cS),a]_{\can}}{[\cS,a]_{\can}}\right\}+2d\cdot T_F(\cS),\label{eq:takeoff}
\end{align}
where the first equality follows from \eqref{eq:resultantDeMarco} and
\eqref{eq:resultantcong}, and so does the second
from the computation of $H_{a,b}^{-1}$ (and the definition of 
the supremum seminorm $|\cdot|_{B_{\cS}}$ on $K[z]$).
On the third equality, we compute
not only
\begin{align*}
-d(d-1)\log|b|
=&-d(d-1)\log\left([\cS,\cS]_{\can}\cdot[\cS,\infty]_{\can}^{-2}\right)\\
=&-d(d-1)\log[\cS,\cS]_{\can}+2d(d-1)\log[\cS,\infty]_{\can}
\end{align*} 
using the former case in \eqref{eq:diameter}, but also
\begin{gather*}
2d\cdot\log\max\left\{|F_0(1,\cdot)|_{B_{\cS}},
\frac{|F_1(1,\cdot)-aF_0(1,\cdot)|_{B_{\cS}}}{|b|}\right\}\\
=2d\cdot\log\max\left\{|F_0(1,\cdot)|_{\cS},
\frac{|F_1(1,\cdot)-aF_0(1,\cdot)|_{\cS}}{|b|}\right\}\\
=2d\cdot\lim_{\bP^1\ni z\to\cS\text{ in }\sP^1}\log\max\left\{|F_0(1,z)|,
\frac{|F_1(1,z)-aF_0(1,z)|}{|b|}\right\}\\
=2d\cdot\lim_{\bP^1\ni z\to\cS\text{ in }\sP^1}\log\biggl(\|F(1,z)\|\max\left\{[f(z),\infty]_{\bP^1},
[a,\infty]_{\bP^1}^{-1}\frac{[f(z),a]_{\bP^1}}{|b|}\right\}\biggr)\\
=2d\cdot\log\max\left\{[f(\cS),\infty]_{\can},
[a,\infty]_{\bP^1}^{-1}\frac{[f(\cS),a]_{\can}}{|b|}\right\}
+2d\cdot\log\|F(1,\cS)\|\\
=2d\cdot\log\max\left\{[f(\cS),\infty]_{\can},
[\cS,\infty]_{\can}\frac{[f(\cS),a]_{\can}}{[\cS,a]_{\can}}\right\}
+2d\cdot\log\|F(1,\cS)\|
\end{gather*}
from the definition of the Gel'fand (i.e., weak or pointwise-convergence)
topology of $\sP^1$, 
the defining equality \eqref{eq:chordaldef} of the chordal metric on $\bP^1$,
the separate continuity of the generalized Hsia kernels on $\sP^1\times\sP^1$
with respect to $\cS_{\can}$
and the continuity of the (canonical) action of $f$ on $\sP^1$,
and the latter case in \eqref{eq:diameter}. Hence the third equality holds.
The final equality 
holds by recalling that the function 
\begin{gather}
 \log\|F\|-d\cdot\log\|\cdot\|\quad\text{on }K^2\setminus\{(0,0)\}\label{eq:cohomological}
\end{gather}
descends to the function $\log\|F(1,\cdot)\|+d\cdot\log[\cdot,\infty]_{\bP^1}$ on
$\bP^1\setminus\{\infty\}$ under the canonical projection 
$\pi:K^2\setminus\{(0,0)\}\to\bP^1$, 
and in turn uniquely extends to a continuous function, say $T_F$, on $\sP^1$
(see, e.g., \cite[\S 2]{okulog}). 

We will show that each of the three terms in the final expression \eqref{eq:takeoff}
in the above computation of $2d(d-1)\times($the right hand side 
of \eqref{eq:resultant}) gives rise to one of the terms in
$2d(d-1)\cdot\Crucial_f$ (recall Definition \ref{th:crucialdef}).

By \eqref{eq:GromovGauss}, the first and second terms are 
computed as
$-\log[\cS,\cS]_{\can}=\rho(\cS,\cS_{\can})$ and 
\begin{align}
\notag&\log\max\left\{\frac{[f(\cS),\infty]_{\can}}{[\cS,\infty]_{\can}},
\frac{[f(\cS),a]_{\can}}{[\cS,a]_{\can}}\right\}\\
=&\max\bigl\{\rho(\cS\wedge_{\can}\infty,\cS_{\can})
-\rho(f(\cS)\wedge_{\can}\infty,\cS_{\can}),\label{eq:division}\\
\notag&\quad\quad\quad\rho(\cS\wedge_{\can}a,\cS_{\can})
-\rho(f(\cS)\wedge_{\can}a,\cS_{\can})\bigr\},
\end{align}
respectively.
Combining the formula
$\Delta T_{F}=f^*\delta_{\cS_{\can}}-d\cdot\delta_{\cS_{\can}}$ on $\sP^1$
(see, e.g., \cite[Definition 2.8]{OkuCharacterization}), 
the equality \eqref{eq:fundamental}, and the equality
\begin{gather*}
 T_F(\cS_{\can})=\sup_{z\in\cO_K}(\log\|F(1,z)\|-d\cdot\log\|(1,z)\|)
=0 
\end{gather*}
under the assumption that the lift $F$ of $f$ is minimal, 
the third term 
is computed as
$2d\cdot T_F=-2d\cdot\int_{\sP^1}\rho(\cS_{\can},\cdot\wedge_{\can}\cS')
\rd(f^*\delta_{\cS_{\can}})(\cS')$.

It remains to give a case by case verification that the right hand side
of \eqref{eq:division}
always equals $\rho(\cS,f(\cS)\wedge_{\can}\cS)$. Let us denote by
\begin{gather*}
 r:=r_{\sP^1,\Gamma_{\{a,\infty,\cS_{\can}\}}}\quad\text{and}\quad
s:=r_{\sP^1,[a,\infty]},
\end{gather*}
so that $s(\cS_{\can})=a\wedge_{\can}\infty$. Recall that
\begin{gather*}
 \cS=h_{a,b}(\cS_{\can})\in[a,\infty]\subset
[a,\infty]\cup[s(\cS_{\can}),\cS_{\can}]=\Gamma_{\{a,\infty,\cS_{\can}\}}.
\end{gather*}
{\bfseries (i)} If $r(f(\cS))\in[a,\infty]$ and $s(\cS_{\can})\in[\cS,\infty]$,
then 
\begin{align*}
&(\text{the right hand side of }\eqref{eq:division})=\\
&\begin{cases}
\max\bigl\{\rho(s(\cS_{\can}),\cS_{\can})-\rho(s(\cS_{\can}),\cS_{\can}),
\rho(\cS,\cS_{\can})-\rho(r(f(\cS)),\cS_{\can})\bigr\}\\
\quad=\max\{0,-\rho(\cS,r(f(\cS)))\}=0\\
\quad=\rho(\cS,\cS)=\rho(\cS,f(\cS)\wedge_{\can}\cS)
\quad\text{if }r(f(\cS))\in[a,\cS],
\\
\max\bigl\{\rho(s(\cS_{\can}),\cS_{\can})-\rho(s(\cS_{\can}),\cS_{\can}),
\rho(\cS,\cS_{\can})-\rho(r(f(\cS)),\cS_{\can})\bigr\}\\
\quad=\rho(\cS,r(f(\cS)))=\rho(\cS,f(\cS)\wedge_{\can}\cS)
\quad\text{if }r(f(\cS))\in[\cS,s(\cS_{\can})],
\\
\max\bigl\{\rho(s(\cS_{\can}),\cS_{\can})-\rho(r(f(\cS)),\cS_{\can}),
\rho(\cS,\cS_{\can})-\rho(s(\cS_{\can}),\cS_{\can})\bigr\}\\
\quad=\max\{-\rho(s(\cS_{\can}),r(f(\cS))),\rho(\cS,s(\cS_{\can}))\}
=\rho(\cS,s(\cS_{\can}))\\
\quad=\rho(\cS,f(\cS)\wedge_{\can}\cS)
\quad\text{if }r(f(\cS))\in[s(\cS_{\can}),\infty].
\end{cases}
\end{align*}
{\bfseries (ii)} If $r(f(\cS))\in[a,\infty]$ and $s(\cS_{\can})\in[a,\cS]$, then 
\begin{align*}
&(\text{the right hand side of }\eqref{eq:division})=\\
&\begin{cases}
\max\bigl\{\rho(\cS,\cS_{\can})-\rho(s(\cS_{\can}),\cS_{\can}),
\rho(s(\cS_{\can}),\cS_{\can})-\rho(r(f(\cS)),\cS_{\can})\bigr\}\\
\quad=\max\{\rho(\cS,s(\cS_{\can})),-\rho(s(\cS_{\can}),r(f(\cS)))\}
=\rho(\cS,s(\cS_{\can}))\\
\quad=\rho(\cS,f(\cS)\wedge_{\can}\cS)
\quad\text{if }r(f(\cS))\in[a,s(\cS_{\can})],
\\
\max\bigl\{\rho(\cS,\cS_{\can})-\rho(r(f(\cS)),\cS_{\can}),
\rho(s(\cS_{\can}),\cS_{\can})-\rho(s(\cS_{\can}),\cS_{\can})\bigr\}\\
\quad=\rho(\cS,r(f(\cS)))=\rho(\cS,f(\cS)\wedge_{\can}\cS)
\quad\text{if }r(f(\cS))\in[s(\cS_{\can}),\cS],
\\
\max\bigl\{\rho(\cS,\cS_{\can})-\rho(r(f(\cS)),\cS_{\can}),
\rho(s(\cS_{\can}),\cS_{\can})-\rho(s(\cS_{\can}),\cS_{\can})\bigr\}\\
\quad=\max\{-\rho(\cS,r(f(\cS))),0\}=0\\
\quad=\rho(\cS,\cS)=\rho(\cS,f(\cS)\wedge_{\can}\cS)\quad\text{if }r(f(\cS))\in[\cS,\infty].
\end{cases}
\end{align*}
{\bfseries (iii)} Finally, if 
$r(f(\cS))\in[s(\cS_{\can}),\cS_{\can}]$, then
no matter whether $s(\cS_{\can})\in[\cS,\infty]$ or 
$s(\cS_{\can})\in[a,\cS]$, we have
\begin{align*}
&(\text{the right hand side of }\eqref{eq:division})\\
=&\max\{\rho(\cS,\cS_{\can})-\rho(r(f(\cS)),\cS_{\can}),
 \rho(s(\cS_{\can}),\cS_{\can})-\rho(r(f(\cS)),\cS_{\can})\}\\
 =&\max\{\rho(\cS,r(f(\cS))),\rho(s(\cS_{\can}),r(f(\cS)))\}\\
 =&\rho(\cS,r(f(\cS)))=\rho(\cS,f(\cS)\wedge_{\can}\cS).
\end{align*}
Hence \eqref{eq:resultant} holds.

{\bfseries (b)} We next establish
the continuity of $\Crucial_f$ on $(\sH^1,\rho)$.
The functions
$\cS\mapsto\rho(\cS,\cS_{\can})$ and
\begin{gather}
\cS\mapsto
T_F(\cS)=-\int_{\sP^1}\rho(\cS_{\can},\cS\wedge_{\can}\cdot)\rd(f^*\delta_{\cS_{\can}})
=\int_{\sP^1}\log[\cS,\cdot]_{\can}\rd(f^*\delta_{\cS_{\can}})
\label{eq:potentialminimal}
\end{gather}
are continuous on $(\sH^1,\rho)$. We claim that the function
\begin{gather*}
 \cS\mapsto-\rho(\cS_{\can},f(\cS)\wedge_{\can}\cS)=\log[f(\cS),\cS]_{\can} 
\end{gather*}
is also continuous on $(\sH^1,\rho)$; indeed, by Theorem \ref{th:local}, 
we have the Lipschitz continuously of the action of $f$
on $(\sH^1,\rho)$ and moreover,
for every $\cS'\in\sH^1$ and every 
$\overrightarrow{v}=\overrightarrow{\cS'\cS''}\in T_{\cS'}\sP^1$,  
diminishing $[\cS',\cS'']$ if necessary, 
we have
\begin{multline}
\cS\mapsto f(\cS)\wedge_{\can}\cS\\
\begin{cases}
\equiv f(\cS')\wedge_{\can}\cS'
& \text{if }f(\cS')\not\in[\cS',\cS_{\can}]
\text{ and }\#(U_{\overrightarrow{v}}\cap\{\cS_{\can},f(\cS')\})\in\{0,2\},
\\
=\cS & \text{if }f(\cS')\not\in[\cS',\cS_{\can}]
\text{ and }\#(U_{\overrightarrow{v}}\cap\{\cS_{\can},f(\cS')\})=1,
\\
\equiv f(\cS') & \text{if }f(\cS')\in(\cS',\cS_{\can}]
\text{ and }U_{f_*(\overrightarrow{v})}\cap\{\cS',\cS_{\can}\}=\emptyset,
\\
=f(\cS)
& \text{if }f(\cS')\in(\cS',\cS_{\can}]\text{ and }
U_{f_*(\overrightarrow{v})}\cap\{\cS',\cS_{\can}\}\neq\emptyset,
\\
=\cS & \text{if }f(\cS')=\cS',f_*(\overrightarrow{v})=\overrightarrow{v},
 \text{ and }\cS_{\can}\not\in U_{\overrightarrow{v}},
\\
=f(\cS)\in[\cS'\cS_{\can})
& \text{if }f(\cS')=\cS',f_*(\overrightarrow{v})=\overrightarrow{v},
 \text{ and }\cS_{\can}\in U_{\overrightarrow{v}},
\\
\equiv\cS' & \text{if }f(\cS')=\cS',
f_*(\overrightarrow{v})\neq\overrightarrow{v},\text{ and }
\cS_{\can}\not\in U_{\overrightarrow{v}}\cup U_{f_*(\overrightarrow{v})},
\\
=\cS & \text{if }f(\cS')=\cS',
f_*(\overrightarrow{v})\neq\overrightarrow{v},\text{ and }
\cS_{\can}\in U_{\overrightarrow{v}},
\\
=f(\cS) & \text{if }f(\cS')=\cS',
f_*(\overrightarrow{v})\neq\overrightarrow{v},\text{ and }
\cS_{\can}\in U_{f_*(\overrightarrow{v})}
\end{cases}\label{eq:imagewedge}
\end{multline}
on $[\cS',\cS'']$ (in fact in \eqref{eq:imagewedge},
we can replace $\cS_{\can}$ and $\wedge_{\can}$
with $\cS_0$ and $\wedge_{\cS_0}$, respectively,
for any $\cS_0\in\sH^1$), so the claim holds. 
Once this claim is at our disposal, also using the equality
$\cS\mapsto\rho(\cS,f(\cS)\wedge_{\can}\cS)
 =\rho(\cS,\cS_{\can})-\rho(\cS_{\can},f(\cS)\wedge_{\can}\cS)$
on $\sH^1$,
we have the continuity of $\Crucial_f$ on $(\sH^1,\rho)$.

{\bfseries (c)} It remains to show \eqref{eq:conjugate}.
We begin by demonstrating that \eqref{eq:conjugate}
holds for every $(\cS,\cS_0)\in\sH^1\times\sH^1_{\mathrm{II}}$,
where 
\begin{gather*}
 \sH^1_{\mathrm{II}}:=\{\text{type II points in }\sP^1\}.
\end{gather*}
For any $h,\ell\in\PGL(2,K)$, by \eqref{eq:resultant}, we have 
$\Crucial_f((h\circ \ell)(\cS_{\can}))
 =\Crucial_{h^{-1}\circ f\circ h}(\ell(\cS_{\can}))+\Crucial_f(h(\cS_{\can}))$.
Hence for any $h\in\PGL(2,K)$, by
the transitivity of the action of $\PGL(2,K)$ on $\sH^1_{\rm II}$, we first have 
$\Crucial_f\circ h-\Crucial_f(h(\cS_{\can}))
=\Crucial_{h^{-1}\circ f\circ h}$ on $\sH^1_{\rm II}$.
Then since $h^{-1}(\sH^1_{\rm II})=\sH^1_{\rm II}$, we have
\begin{gather}
\Crucial_f-\Crucial_f(h(\cS_{\can}))
=\Crucial_{h^{-1}\circ f\circ h}\circ h^{-1}
\tag{\ref{eq:conjugate}$'$}\label{eq:conjugatebalc}
\end{gather}
on $\sH^1_{\rm II}$, and in turn on $\sH^1$
by the density of $\sH^1_{\rm II}$ in $(\sH^1,\rho)$ and the continuity 
on $(\sH^1,\rho)$ of both sides of \eqref{eq:conjugatebalc}. 

For every $(\cS,\cS_0)\in\sH^1\times\sH^1_{\rm II}$,
by the transitivity of the action of $\PGL(2,K)$ on $\sH^1_{\rm II}$,
there exists $h\in\PGL(2,K)$ such that $\cS_0=h(\cS_{\can})$.
We also recall that $h$ acts on $(\sH^1,\rho)$ isometrically.
By \eqref{eq:conjugatebalc} and the definition
of $\Crucial_{h^{-1}\circ f\circ h}$ in Definition \ref{th:crucialdef},
we compute
\begin{align*}
&2(d-1)(\Crucial_f(\cS)-\Crucial_f(\cS_0))\\
=&2(d-1)\cdot\Crucial_{h^{-1}\circ f\circ h}(h^{-1}(\cS))\\
=&(d-1)\cdot\rho(h^{-1}(\cS),\cS_{\can})
 +2\rho\bigl(h^{-1}(\cS),(h^{-1}\circ f\circ h)(h^{-1}(\cS))\wedge_{\can}h^{-1}(\cS)\bigr)\\
 &-2\int_{\sP^1}\rho\bigl(\cS_{\can},h^{-1}(\cS)\wedge_{\can}\cdot\bigr)\rd((h^{-1}\circ f\circ h)^*\delta_{\cS_{\can}})\\
=&(d-1)\cdot\rho(\cS,\cS_0)+2\rho(\cS,f(\cS)\wedge_{\cS_0}\cS)\\
 &-2\int_{\sP^1}(h^{-1})^*\bigl(\rho(\cS_{\can},h^{-1}(\cS)\wedge_{\can}\cdot)\bigr)
\rd((h^{-1})^*(h^*f^*\delta_{\cS_0}))\\
=&(d-1)\cdot\rho(\cS,\cS_0)+2\rho(\cS,f(\cS)\wedge_{\cS_0}\cS)
 -2\int_{\sP^1}\rho(\cS_0,\cS\wedge_{\cS_0}\cdot)\rd(f^*\delta_{\cS_0}),
\end{align*}
that is, the equality \eqref{eq:conjugate} holds for every $(\cS,\cS_0)\in\sH^1\times\sH^1_{\rm II}$.

{\bfseries (d)}
Let us complete the proof of \eqref{eq:conjugate}
by a continuity argument.
Fixing $\cS\in\sH^1$, 
the function
$\cS_0\mapsto\rho(\cS,\cS_0)$ is continuous on $(\sH^1,\rho)$,
and so is the function $\cS_0\mapsto\rho(\cS,f(\cS)\wedge_{\cS_0}\cS)
=\rho(\cS,f(\cS)\wedge_{\cS}\cS_0)$.

For any $\cS,\cS_0\in\sH^1$, we compute as
\begin{align*}
&-\int_{\sP^1}\rho(\cS_0,\cS\wedge_{\cS_0}\cdot)\rd(f^*\delta_{\cS_0})\\
=&\int_{\sP^1}(\log[\cS,\cdot]_{\can}-\log[\cdot,\cS_0]_{\can})\rd(f^*\delta_{\cS_0})
+d\cdot(\log[\cS_0,\cS_0]_{\can}-\log[\cS,\cS_0]_{\can})\\
=&
\int_{\sP^1}
(\log[\cS,\cdot]_{\can}-\log[\cdot,\cS_0]_{\can})
\rd(f^*\Delta\log[\cdot,\cS_0]_{\can})\\
&\quad
+T_F(\cS)-T_F(\cS_0)
-d\cdot(\rho(\cS_0,\cS_{\can})+\log[\cS,\cS_0]_{\can})\\
=&
\int_{\sP^1}f^*(\log[\cdot,\cS_0]_{\can})
\Delta(\log[\cdot,\cS]_{\can}-\log[\cdot,\cS_0]_{\can})\\
&\quad
+T_F(\cS)-T_F(\cS_0)
-d\cdot\bigl(\rho(\cS_0,\cS_{\can})+\log[\cS,\cS_0]_{\can}\bigr)\\
=&
\log[f(\cS),\cS_0]_{\can}-\log[f(\cS_0),\cS_0]_{\can}\\
&\quad
+T_F(\cS)-T_F(\cS_0)-d\cdot\bigl(\rho(\cS_0,\cS_{\can})+\log[\cS,\cS_0]_{\can}\bigr),
\end{align*}
where the first equality follows from \eqref{eq:Gromovevery} and
\eqref{eq:Hsiaevery}, and so does the second
from \eqref{eq:fundamental},
\eqref{eq:potentialminimal}, and 
\eqref{eq:GromovGauss}.
The third equality follows from \eqref{eq:functorial} and 
Green's formula (see e.g.\ \cite[Proposition 3.2]{BR10}), 
and so does the final equality from \eqref{eq:GromovGauss} and 
\eqref{eq:fundamental}.

In the above part {\bfseries (b)}, we have seen that 
$\cS_0\mapsto\log[f(\cS_0),\cS_0]_{\can}$ is continuous
on $(\sH^1,\rho)$. Hence fixing $\cS\in\sH^1$, 
the function
$\cS_0\mapsto-\int_{\sP^1}\rho(\cS_0,\cS\wedge_{\cS_0}\cdot)\rd(f^*\delta_{\cS_0})$
is also continuous on $(\sH^1,\rho)$.
Now fixing the first variable $\cS\in\sH^1$,
both sides of \eqref{eq:conjugate} are continuous on 
$(\sH^1,\rho)$ as functions of the second variable $\cS_0$, 
so by the density of $\sH^1_{\rm II}$ in $(\sH^1,\rho)$,
the equality \eqref{eq:conjugate} holds on the whole $\sH^1\times\sH^1$. \qed

\section{Proof of Theorem \ref{th:convex}}\label{sec:convex}
Let $f\in K(z)$ be of degree $d>1$.
For every $\cS_0\in\sH^1$ and every direction
$\overrightarrow{v}=\overrightarrow{\cS_0\cS'}\in T_{\cS_0}\sP^1$,
diminishing $[\cS_0,\cS']$ if necessary, we have
\begin{multline}
\cS\mapsto\rho(\cS,f(\cS)\wedge_{\cS_0}\cS)
=\begin{cases}
\rho(\cS,\cS)\equiv 0 & \text{if }f(\cS_0)\neq\cS_0\text{ and }
\overrightarrow{\cS_0f(\cS_0)}=\overrightarrow{v},\\
\rho(\cS,\cS_0) & \text{if }f(\cS_0)\neq\cS_0\text{ and }
\overrightarrow{\cS_0f(\cS_0)}\neq\overrightarrow{v},\\
\rho(\cS,\cS)\equiv 0 & \text{if }f(\cS_0)=\cS_0
\text{ and }f_*(\overrightarrow{v})=\overrightarrow{v},\\
\rho(\cS,\cS_0) & \text{if }f(\cS_0)=\cS_0
\text{ and }f_*(\overrightarrow{v})\neq\overrightarrow{v}
\end{cases}\label{eq:fixed}
\end{multline}
on $[\cS_0,\cS']$,
by Theorem \ref{th:local} 
(cf.\ the 1st, 2nd, 5th, and 6th
cases in \eqref{eq:imagewedge}).
On the other hand, on $[\cS_0,\cS']$,
\begin{multline}
\cS\mapsto\int_{\sP^1}\rho(\cS_0,\cS\wedge_{\cS_0}\cdot)\rd(f^*\delta_{\cS_0})
\Bigl(=(f^*\delta_{\cS_0})(U_{\overrightarrow{v}})\cdot\rho(\cS_0,\cS)\Bigr)\\
\begin{cases}
=(f^*\delta_{\cS_0})(U_{\overrightarrow{v}})\cdot\rho(\cS_0,\cS)
&\text{if }\overrightarrow{v}\in T_{\cS_0}(\Gamma_{\{\cS_0\}\cup f^{-1}(\cS_0)}),\\
\equiv(f^*\delta_{\cS_0})(\sP^1)\cdot\rho(\cS_0,\cS_0)
=0 & \text{otherwise}.
\end{cases}\label{eq:outsidetree}
\end{multline}

{\bfseries (a)}
By \eqref{eq:fixed} and \eqref{eq:outsidetree},
for every $\cS_0\in\sH^1$ and
every $\overrightarrow{v}=\overrightarrow{\cS_0\cS'}\in T_{\cS_0}\sP^1$,
diminishing $[\cS_0,\cS']$ if necessary,
the functions $\cS\mapsto\rho(\cS,f(\cS)\wedge_{\cS_0}\cS)$ and
$\cS\mapsto\int_{\sP^1}\rho(\cS_0,\cS\wedge_{\cS_0}\cdot)
\rd(f^*\delta_{\cS_0})$ are affine on $([\cS_0,\cS'],\rho)$,
and we have the desired ranges for
$\rd_{\overrightarrow{v}}(\cS\mapsto\rho(\cS,f(\cS)\wedge_{\cS_0}\cS))$
and $\rd_{\overrightarrow{v}}(\cS\mapsto\int_{\sP^1}\rho(\cS_0,\cS\wedge_{\cS_0}\cdot)\rd(f^*\delta_{\cS_0}))$. 

It now follows also from \eqref{eq:conjugate} that
in addition to being continuous,
$\Crucial_f$ is piecewise affine on $(\sH^1,\rho)$, 
and for every $\cS_0\in\sH^1$ and every $\overrightarrow{v}\in T_{\cS_0}\sP^1$,
we also have the desired range \eqref{eq:rangecrucial} 
for $\rd_{\overrightarrow{v}}\Crucial_f$.

{\bfseries (b)} 
Let us see that $\Crucial_f$ is convex on $(\sH^1,\rho)$.
Let $\cS_0\in\sH^1$, and fix {\itshape distinct} directions
$\overrightarrow{v_1},\overrightarrow{v_2}\in T_{\cS_0}\sP^1$,
so that $U_{\overrightarrow{v_1}}\cap U_{\overrightarrow{v_2}}=\emptyset$.

If $f(\cS_0)\neq\cS_0$, then we also have 
$\#\{i\in\{1,2\}:\overrightarrow{\cS_0f(\cS_0)}=\overrightarrow{v_i}\}\le 1$.
Hence by \eqref{eq:fixed} and \eqref{eq:outsidetree},
we respectively have 
\begin{gather*}
 (\rd_{\overrightarrow{v_1}}+\rd_{\overrightarrow{v_2}})
 (\cS\mapsto\rho(\cS,f(\cS)\wedge_{\cS_0}\cS))\ge 1
\end{gather*}
and 
\begin{multline*}
 (\rd_{\overrightarrow{v_1}}+\rd_{\overrightarrow{v_2}})
 \Bigl(\cS\mapsto\int_{\sP^1}\rho(\cS_0,\cS\wedge_{\cS_0}\cdot)\rd(f^*\delta_{\cS_0})\Bigr)\\
 =(f^*\delta_{\cS_0})(U_{\overrightarrow{v_1}}\cup U_{\overrightarrow{v_2}})
 \le(f^*\delta_{\cS_0})(\sP^1)=d,
\end{multline*}
so that also using \eqref{eq:conjugate}, we have
\begin{gather}
 (\rd_{\overrightarrow{v_1}}+\rd_{\overrightarrow{v_2}})
 \Crucial_f\ge 2\cdot\frac{1}{2}+\frac{1-d}{d-1}=0.\label{eq:convexnonfixed}
\end{gather}
Alternatively, if $f(\cS_0)=\cS_0$, then by \eqref{eq:fixed} and \eqref{eq:outsidetree},
we respectively have 
\begin{gather*}
 (\rd_{\overrightarrow{v_1}}+\rd_{\overrightarrow{v_2}})
 (\cS\mapsto\rho(\cS,f(\cS)\wedge_{\cS_0}\cS))\ge 0
\end{gather*}
and
\begin{multline*}
 (\rd_{\overrightarrow{v_1}}+\rd_{\overrightarrow{v_2}})
 \Bigl(\cS\mapsto\int_{\sP^1}\rho(\cS_0,\cS\wedge_{\cS_0}\cdot)\rd(f^*\delta_{\cS_0})\Bigr)\\
=(f^*\delta_{\cS_0})(U_{\overrightarrow{v_1}}\cup U_{\overrightarrow{v_2}})
\le(f^*\delta_{\cS_0})(\sP^1)-\deg_{\cS_0}(f)=d-\deg_{\cS_0}(f), 
\end{multline*} 
so that also using \eqref{eq:conjugate}, we have
 \begin{gather}
 \left(\rd_{\overrightarrow{v_1}}+\rd_{\overrightarrow{v_2}}\right)\Crucial_f
 \ge 2\cdot\frac{1}{2}+\frac{0-(d-\deg_{\cS_0}(f))}{d-1}
 =\frac{\deg_{\cS_0}(f)-1}{d-1}\ge 0.\label{eq:convexfixed}
 \end{gather}
By \eqref{eq:convexnonfixed} and \eqref{eq:convexfixed},
in addition to being piecewise affine,
$\Crucial_f$ is 
convex on $(\sH^1,\rho)$.
 
{\bfseries (c)} 
We next establish that $\Crucial_f$ is locally affine at
every point of one of types I, III, and IV. 

For every type III point $\cS_0\in\sH^1$,
we can write $T_{\cS_0}\sP^1=\{\overrightarrow{v_1},\overrightarrow{v_2}\}$.
If $f(\cS_0)\neq \cS_0$, then we have
$\#\{i\in\{1,2\}:\overrightarrow{\cS_0f(\cS_0)}=\overrightarrow{v_i}\}=1$
as well as $U_{\overrightarrow{v_1}}\cup U_{\overrightarrow{v_2}}
=\sP^1\setminus\{\cS_0\}\supset f^{-1}(\cS_0)$, 
so by \eqref{eq:fixed} and \eqref{eq:outsidetree}, we have
the {\itshape equalities}
$(\rd_{\overrightarrow{v_1}}+\rd_{\overrightarrow{v_2}})
(\cS\mapsto\rho(\cS,f(\cS)\wedge_{\cS_0}\cS))=1$
and 
$(\rd_{\overrightarrow{v_1}}+\rd_{\overrightarrow{v_2}})
 (\cS\mapsto\int_{\sP^1}\rho(\cS_0,\cS\wedge_{\cS_0}\cdot)\rd(f^*\delta_{\cS_0}))
 =(f^*\delta_{\cS_0})(U_{\overrightarrow{v_1}}\cup U_{\overrightarrow{v_2}})=d$,
 respectively.
Thus the equality holds in \eqref{eq:convexnonfixed} in this case.
If $f(\cS_0)=\cS_0$, then by Theorem \ref{th:indifferent},
we have not only 
$\deg_{\cS_0}(f)=1$ but also $f_*(\overrightarrow{v_i})=\overrightarrow{v_i}$
 for each $i\in\{1,2\}$, as well as
 $U_{\overrightarrow{v_1}}\cup U_{\overrightarrow{v_2}}
 =\sP^1\setminus\{\cS_0\}$. Hence
 by \eqref{eq:fixed} and \eqref{eq:outsidetree}, we have
 the {\itshape equalities} 
 $(\rd_{\overrightarrow{v_1}}+\rd_{\overrightarrow{v_2}})(\cS\mapsto\rho(\cS,f(\cS)\wedge_{\cS_0}\cS))=0$ and 
$(\rd_{\overrightarrow{v_1}}+\rd_{\overrightarrow{v_2}})
 (\cS\mapsto\int_{\sP^1}\rho(\cS_0,\cS\wedge_{\cS_0}\cdot)\rd(f^*\delta_{\cS_0}))
 =(f^*\delta_{\cS_0})(U_{\overrightarrow{v_1}}\cup U_{\overrightarrow{v_2}})
 =d-\deg_{\cS_0}(f)$, respectively.
Thus the equalities also hold throughout in \eqref{eq:convexfixed} in this case.
Hence the function
 $\Crucial_f$ is also locally affine on $(\sP^1,\tilde{\rho})$ at 
 every type III point. 

On the other hand,
the convexity and the piecewise affineness of $\Crucial_f$ on $(\sH^1,\rho)$,
together with
the finiteness of the range of $\rd_{\overrightarrow{v}}\Crucial_f$
for each $\cS\in\sH^1$
and each $\overrightarrow{v}\in T_{\cS}\sP^1$,
imply that $\Crucial_f$ is locally affine 
on $(\sP^1,\tilde{\rho})$ at every type I point.
For every type IV point $\cS_0\in\sH^1$, since $\#T_{\cS_0}\sP^1=1<2$,
$\Crucial_f$ is trivially locally affine on $(\sP^1,\tilde{\rho})$ at $\cS_0$.

{\bfseries (d)} Let us see \eqref{eq:typeI} and \eqref{eq:typeIpullback}.
Let $a\in\bP^1$, and fix $\cS_0\in\sH^1$. 
For every $\cS'\in f^{-1}(\cS_0)$, we have
\begin{multline*}
\cS\mapsto\cS\wedge_{\cS_0}\cS'
=r_{\sP^1,[\cS_0,\cS']}(\cS)
=\bigl(r_{\Gamma_{\{\cS_0\}\cup f^{-1}(\cS_0)},[\cS_0,\cS']}\circ
r_{\sP^1,\Gamma_{\{\cS_0\}\cup f^{-1}(\cS_0)}}\bigr)(\cS)\\
=\bigl(r_{\sP^1,\Gamma_{\{\cS_0\}\cup f^{-1}(\cS_0)}}(\cS)\bigr)\wedge_{\cS_0}\cS'
\equiv\bigl(r_{\sP^1,\Gamma_{\{\cS_0\}\cup f^{-1}(\cS_0)}}(a)\bigr)\wedge_{\cS_0}\cS'
\end{multline*} 
on $[a,r_{\sP^1,\Gamma_{\{\cS_0\}\cup f^{-1}(\cS_0)}}(a)\bigr]$.
Hence fixing $\cS_0'\in(a,r_{\sP^1,\Gamma_{\{\cS_0\}\cup f^{-1}(\cS_0)}}(a)]$,
we have \eqref{eq:typeIpullback} on $(a,\cS_0']$.
Let us see \eqref{eq:typeI}. 

If $f(a)\neq a$, then diminishing $[a,\cS_0']$ if necessary, 
$\cS\mapsto\rho(\cS,f(\cS)\wedge_{\cS_0}\cS)
=\rho(\cS,\cS_0)-\rho(\cS_0,f(\cS)\wedge_{\cS_0}\cS)
\equiv\rho(\cS,\cS_0)-\rho(\cS_0,f(a)\wedge_{\cS_0}a)$
on $(a,\cS_0']$, 
that is, \eqref{eq:typeI} holds when $f(a)\neq a$.

Suppose next that $f(a)=a$. By Theorem \ref{th:local},
diminishing $[a,\cS_0']$ if necessary, 
$f$ maps $[a,\cS_0']$ homeomorphically onto $[a,f(\cS_0')]$ and,
for any $\cS_1,\cS_2\in(a,\cS_0']$, we have
$\rho(f(\cS_1),f(\cS_2))=(\deg_a f)\cdot\rho(\cS_1,\cS_2)$.
We note that $f(\cS_0')\wedge_a\cS_0'\in(a,f(\cS_0')]$, so
(replacing $\cS_0'$ with the point
$(f|(a,\cS_0'])^{-1}\bigl(f(\cS_0')\wedge_a\cS_0'\bigr)$
if necessary,) we can also assume that 
$f(\cS_0')\in(a,r_{\sP^1,\Gamma_{\{\cS_0\}\cup f^{-1}(\cS_0)}}(a)]$. 
By the strong triangle inequality for $|\cdot|$ and 
the power series expansion of $f$ around $a$,
the chordal derivative $f^\#(a)$ of $f$ at $a$ is computed as
\begin{gather*}
 f^\#(a)=\lim_{\cS\to a}\frac{[f(\cS),a]_{\can}}{[\cS,a]_{\can}}.
\end{gather*}

First, in the case where $\deg_af>1$, i.e., $f^\#(a)=0$, we claim that
if $\cS\in(a,\cS_0']$ is close enough to $a$, then $f(\cS)\in(a,\cS]$; 
for, {\bfseries (Subcase 1)} 
if $\cS_0'\in(a,f(\cS_0')]$, then for every $\cS\in(a,\cS_0']$ 
close enough to $a$, we have $\rho(\cS,\cS_0')\ge\rho(f(\cS_0'),\cS_0')$(, $\{\cS,f(\cS)\}\subset(a,f(\cS_0')]$,) and
$\rho(f(\cS),f(\cS_0'))=(\deg_a f)\cdot\rho(\cS,\cS_0')
\ge (\deg_a f-1)\cdot\rho(\cS,\cS_0')+\rho(\cS_0',f(\cS_0'))\ge\rho(\cS,f(\cS_0'))$,
so that $f(\cS)\in(a,\cS]$. {\bfseries (Subcase 2)} 
If $f(\cS_0')\in(a,\cS_0']$, then 
for any $\cS\in(a,\cS_0']$, we have ($f(\cS)\in(a,f(\cS_0')]$ and)
$\rho(f(\cS),\cS_0')=\rho(f(\cS),f(\cS_0'))+\rho(f(\cS_0'),\cS_0')
\ge(\deg_af)\cdot\rho(\cS,\cS_0')
\ge\rho(\cS,\cS_0')$, so that $f(\cS)\in(a,\cS]$.
Hence the claim holds, so diminishing $[a,\cS_0']$ if necessary,
we have
$\cS\mapsto\rho(\cS,f(\cS)\wedge_{\cS_0}\cS)=\rho(\cS,\cS)\equiv 0
=\log\max\{1,f^\#(a)\}$
on $(a,\cS_0']$ in this case.

Next, in the case where $\deg_af=1$ and $f(\cS_0')\in(a,\cS_0']$, 
 for every $\cS\in(a,\cS_0']$, we have
 ($f(\cS)\in(a,f(\cS_0')]$ and)
$\rho(f(\cS),\cS_0')=\rho(f(\cS),f(\cS_0'))+\rho(f(\cS_0'),\cS_0')
 \ge 1\cdot\rho(\cS,\cS_0')
=\rho(\cS,\cS_0')$,
so that $f(\cS)\in(a,\cS]$. Then 
diminishing $[a,\cS_0']$ so that $\cS_0'\in(a,r_{\sP^1,[a,\cS_0]}(\cS_{\can})]$
if necessary, also by \eqref{eq:GromovGauss}, we have 
\begin{gather*}
\cS\mapsto\rho(f(\cS),\cS)=\rho(f(\cS),\cS_{\can})-\rho(\cS,\cS_{\can})
=-\log\frac{[f(\cS),a]_{\can}}{[\cS,a]_{\can}}
\quad\text{on }(a,\cS_0'],
\end{gather*}
so that $f^\#(a)=\lim_{\cS\to a}([f(\cS),a]_{\can}/[\cS,a]_{\can})\le 1$.
Hence $\cS\mapsto\rho(\cS,f(\cS)\wedge_{\cS_0}\cS)
=\rho(\cS,\cS)\equiv 0=\log\max\{1,f^\#(a)\}$ on $(a,\cS_0']$ in this case.

Finally, in the case where $\deg_af=1$ and $\cS_0'\in(a,f(\cS_0')]$,
for every $\cS\in(a,\cS_0']$, we have ($f(\cS)\in(a,f(\cS_0')]$ and)
$\rho(f(\cS),f(\cS_0'))=(\deg_a f)\cdot\rho(\cS,\cS_0')
 \le 1\cdot\rho(\cS,\cS_0')+\rho(\cS_0',f(\cS_0'))=\rho(\cS,f(\cS_0'))$, 
so that $\cS\in(a,f(\cS)]$. Then 
diminishing $[a,\cS_0']$ so that $f(\cS_0')\in(a,r_{\sP^1,[a,\cS_0]}(\cS_{\can})]$
if necessary, also by \eqref{eq:GromovGauss},
we have not only
\begin{multline*}
\cS\mapsto\rho(\cS,f(\cS)\wedge_{\cS_0}\cS)=\rho(\cS,f(\cS))
=\rho(\cS,\cS_{\can})-\rho(f(\cS),\cS_{\can})\\
=\log\frac{[f(\cS),a]_{\can}}{[\cS,a]_{\can}}
\quad\text{on }(a,\cS_0'],
\end{multline*}
so $f^\#(a)=\lim_{\cS\to a}([f(\cS),a]_{\can}/[\cS,a]_{\can})\ge 1$,
but also
for any $\cS_1,\cS_2\in(a,\cS_0']$,
\begin{multline*}
\biggl|\log\frac{[f(\cS_1),a]_{\can}}{[\cS_1,a]_{\can}}-\log\frac{[f(\cS_2),a]_{\can}}{[\cS_2,a]_{\can}}\biggr|
=\biggl|\log\frac{[f(\cS_1),a]_{\can}}{[f(\cS_2),a]_{\can}}-\log\frac{[\cS_2,a]_{\can}}{[\cS_1,a]_{\can}}\biggr|\\
 =|\rho(f(\cS_1),f(\cS_2))-\rho(\cS_1,\cS_2)|=|1\cdot\rho(\cS_1,\cS_2)-\rho(\cS_1,\cS_2)|=0.
\end{multline*}
Hence $\cS\mapsto\rho(\cS,f(\cS)\wedge_{\cS_0}\cS)=
\log([f(\cS),a]_{\can}/[\cS,a]_{\can})\equiv\log(f^\#(a))=\log\max\{1,f^\#(a)\}$
on $(a,\cS_0']$ in this final case.

{\bfseries (e)} It remains to see \eqref{eq:slopeI}.
Once \eqref{eq:typeI} and \eqref{eq:typeIpullback} are at our disposal,
we have \eqref{eq:slopeI} for every type I point $\cS'=a\in\bP^1$ and every $\cS_0\in\sP^1\setminus\{a\}$ by \eqref{eq:conjugate}. 

Fix a type IV point $\cS'\in\sP^1$. For every $\cS_0\in\sP^1\setminus\{\cS'\}$,
set $\overrightarrow{v}:=\overrightarrow{\cS'\cS_0}$, so that
$T_{\cS'}\sP^1=\{\overrightarrow{v}\}$. 
If $f(\cS')\neq\cS'$, then
$\emptyset\neq f^{-1}(\cS')\subset\sP^1\setminus\{\cS'\}$, 
so that 
$\overrightarrow{\cS'f(\cS')}=\overrightarrow{v}\in T_{\cS'}\sP^1
=T_{\cS'}(\Gamma_{\cS'\cup f^{-1}(\cS')})$ and $(f^*\delta_{\cS'})(U_{\overrightarrow{v}})=(f^*\delta_{\cS'})(\sP^1\setminus\{\cS'\})=d$. 
Alternatively, if $f(\cS')=\cS'$, then
$\deg_{\cS'}f=1(<d)$ by Theorem \ref{th:indifferent}, 
so that
$f_*(\overrightarrow{v})=\overrightarrow{v}\in T_{\cS'}\sP^1
=T_{\cS'}(\Gamma_{\cS'\cup f^{-1}(\cS')})$ and 
$(f^*\delta_{\cS'})(U_{\overrightarrow{v}})
=(f^*\delta_{\cS'})(\sP^1\setminus\{\cS'\})=d-1$.
Hence by \eqref{eq:conjugate}, \eqref{eq:fixed}, and \eqref{eq:outsidetree},
we can fix $\cS_0'\in(\cS',\cS_0]$ so close to $\cS'$ that
\begin{multline*}
\cS\mapsto\Crucial_f(\cS)-\Crucial_f(\cS')\\
=\Biggl(\frac{1}{2}+\frac{0-\begin{cases}
		     d & \text{if }f(\cS')\neq\cS'\\
		     d-1 & \text{if }f(\cS')=\cS'
		    \end{cases}}{d-1}\Biggr)\cdot\rho(\cS,\cS')
\quad\text{on }[\cS',\cS_0'].
\end{multline*}
Hence \eqref{eq:slopeI} also holds for every type IV point $\cS'$ and 
every $\cS_0\in\sP^1\setminus\{\cS'\}$. \qed

\section{Proof of Theorem \ref{th:weight}}
\label{sec:weight}
Let $f\in K(z)$ be of degree $d>1$. 
Let $\Gamma$ be a non-trivial finite subtree in $\sP^1$.
Then for every $\cS_0\in\sH^1$, 
by \eqref{eq:conjugate}, 
\eqref{eq:fundamental}, \eqref{eq:laplacian}, and \eqref{eq:outer},
we have
\begin{align*}
\nu_{f,\Gamma}
=&\Bigl((-\nu_{\Gamma}+(r_{\sP^1,\Gamma})_*\delta_{\cS_0})\\
\notag&\quad+\frac{\Delta_{\Gamma}(\cS\mapsto\rho(\cS,f(\cS)\wedge_{\cS_0}\cS))
+(r_{\sP^1,\Gamma})_*(f^*\delta_{\cS_0}-d\cdot\delta_{\cS_0})}{d-1}\Bigr)+\nu_{\Gamma}\\
=&\frac{\Delta_{\Gamma}(\cS\mapsto\rho(\cS,f(\cS)\wedge_{\cS_0}\cS))
 +(r_{\sP^1,\Gamma})_*(f^*\delta_{\cS_0}-\delta_{\cS_0})}{d-1}\quad\text{on }\Gamma,
\end{align*}
so the equality \eqref{eq:geometric} holds. 

For every $\cS'\in\Gamma\cap\sH^1$ and 
every $\cS_0\in\sH^1\setminus\bigl((r_{\sP^1,\Gamma})^{-1}(\cS')\cup\{f(\cS')\}\bigr)$, 
we have $\cS_0\neq\cS'$,
$\overrightarrow{\cS'\cS_0}\in T_{\cS'}\Gamma$, and
\begin{multline}
(d-1)\cdot\nu_{f,\Gamma}(\{\cS'\})\\
=\sum_{\overrightarrow{v}\in T_{\cS'}\Gamma}\rd_{\overrightarrow{v}}
\bigl(\cS\mapsto\rho(\cS,f(\cS)\wedge_{\cS_0}\cS)\bigr)
+\sum_{\overrightarrow{v}\in(T_{\cS'}\sP^1)\setminus(T_{\cS'}\Gamma)}(f^*\delta_{\cS_0})(U_{\overrightarrow{v}}),\label{eq:alwaysgeneral}
\end{multline}
the second term in the right hand side of which is always in $\bN\cup\{0\}$. 

{\bfseries (a)} For every fixed point $\cS'\in\sH^1$ of $f$,
every $\cS_0\in\sH^1\setminus\{\cS'\}$, and 
every $\overrightarrow{v}=\overrightarrow{\cS'\cS''}\in T_{\cS'}\sP^1$, 
diminishing $[\cS',\cS'']$ if necessary, we have on $[\cS',\cS'']$,
\begin{multline}
\cS\mapsto\rho(\cS,f(\cS)\wedge_{\cS_0}\cS)=\\
 \begin{cases}
 \rho(\cS,f(\cS))=\bigl(m_{\overrightarrow{v}}(f)+1\bigr)\rho(\cS,\cS') & \text{if }
 \overrightarrow{v}\neq\overrightarrow{\cS'\cS_0}\text{ and }
 f_*(\overrightarrow{v})=\overrightarrow{\cS'\cS_0},\\
 \rho(\cS,\cS)\equiv 0 & \text{if }
 \overrightarrow{v}\neq\overrightarrow{\cS'\cS_0}\text{ and }
 f_*(\overrightarrow{v})=\overrightarrow{v},\\
 \rho(\cS,\cS') & \text{if }
 \overrightarrow{v}\neq\overrightarrow{\cS'\cS_0}\text{ and }
 f_*(\overrightarrow{v})\not\in\{\overrightarrow{v},\overrightarrow{\cS'\cS_0}\},\\
 \rho(\cS,f(\cS))=\bigl(m_{\overrightarrow{v}}(f)-1\bigr)\rho(\cS,\cS') &\text{if }
 \overrightarrow{v}=\overrightarrow{\cS'\cS_0}\text{ and }
 f_*(\overrightarrow{v})=\overrightarrow{\cS'\cS_0},\\
 \rho(\cS,\cS)\equiv 0 &\text{if }
 \overrightarrow{v}=\overrightarrow{\cS'\cS_0}\text{ and }
 f_*(\overrightarrow{v})\neq\overrightarrow{\cS'\cS_0}
 \end{cases}
\label{eq:fixedcase}
\end{multline}
by Theorem \ref{th:local}
(cf.\ the final five cases in \eqref{eq:imagewedge}). 

Hence for every fixed point $\cS'\in\Gamma\cap\sH^1$ of $f$ and
every $\cS_0\in\sH^1\setminus(r_{\sP^1,\Gamma})^{-1}(\cS')$, we have
\begin{gather}
 \sum_{\overrightarrow{v}\in T_{\cS'}\Gamma}\rd_{\overrightarrow{v}}
(\cS\mapsto\rho(\cS,f(\cS)\wedge_{\cS_0}\cS))\in\bN\cup\{0\}\label{eq:fixedcasegeneral}
\end{gather}
and compute it as
\begin{align}
\notag&\sum_{\overrightarrow{v}\in T_{\cS'}\Gamma}\rd_{\overrightarrow{v}}
\bigl(\cS\mapsto\rho(\cS,f(\cS)\wedge_{\cS_0}\cS)\bigr)\\
\notag=&\sum_{\overrightarrow{v}\in T_{\cS'}\Gamma
\setminus\{\overrightarrow{\cS'\cS_0}\}:
f_*(\overrightarrow{v})=\overrightarrow{\cS'\cS_0}}\bigl(m_{\overrightarrow{v}}(f)+1\bigr)
+\sum_{\overrightarrow{v}\in T_{\cS'}\Gamma
\setminus\{\overrightarrow{\cS'\cS_0}\}:
f_*(\overrightarrow{v})=\overrightarrow{v}}0\\
\notag&+\#\Bigl\{\overrightarrow{v}\in T_{\cS'}\Gamma
\setminus\bigl\{\overrightarrow{\cS'\cS_0}\bigr\}:
f_*(\overrightarrow{v})
\not\in\bigl\{\overrightarrow{v},\overrightarrow{\cS'\cS_0}\bigr\}\Bigr\}\\
\notag&+\begin{cases}
  m_{\overrightarrow{\cS'\cS_0}}(f)-1 &\text{if }f_*(\overrightarrow{\cS'\cS_0})=\overrightarrow{\cS'\cS_0}\\
  0 &\text{if }f_*(\overrightarrow{\cS'\cS_0})\neq\overrightarrow{\cS'\cS_0}
 \end{cases}\\
\notag=&\Biggl(\sum_{\overrightarrow{v}\in T_{\cS'}\Gamma:f_*(\overrightarrow{v})=\overrightarrow{\cS'\cS_0}}m_{\overrightarrow{v}}(f)\Biggr)\\
\notag&+\#\bigl\{\overrightarrow{v}\in T_{\cS'}\Gamma\setminus\{\overrightarrow{\cS'\cS_0}\bigr\}:f_*(\overrightarrow{v})=\overrightarrow{\cS'\cS_0}
\neq\overrightarrow{v}\}\\
\notag&+\#\Bigl\{\overrightarrow{v}\in T_{\cS'}\Gamma
\setminus\bigl\{\overrightarrow{\cS'\cS_0}\bigr\}:
f_*(\overrightarrow{v})
\not\in\bigl\{\overrightarrow{v},\overrightarrow{\cS'\cS_0}\bigr\}\Bigr\}
+
\begin{cases}
-1
&\text{if }f_*(\overrightarrow{\cS'\cS_0})=\overrightarrow{\cS'\cS_0},\\
0&\text{if }f_*(\overrightarrow{\cS'\cS_0})\neq\overrightarrow{\cS'\cS_0}
\end{cases}\\
=&
\Biggl(\sum_{\overrightarrow{v}\in T_{\cS'}\Gamma:f_*(\overrightarrow{v})=\overrightarrow{\cS'\cS_0}}m_{\overrightarrow{v}}(f)\Biggr)
+\#\{\overrightarrow{v}\in T_{\cS'}\Gamma:
f_*(\overrightarrow{v})\neq\overrightarrow{v}\}-1.\label{eq:fixedLaplaciangeneral}
\end{align}
{\bfseries (b)} For every $\cS'\in\sH^1$ not fixed by $f$,
every $\cS_0\in(\sH^1\setminus\{\cS'\})\cap U_{\overrightarrow{f(\cS')\cS'}}$, and
every $\overrightarrow{v}=\overrightarrow{\cS'\cS''}\in T_{\cS'}\sP^1$,
diminishing $[\cS',\cS'']$ if necessary, we have
 \begin{align*}
 &\cS\mapsto\rho(\cS,f(\cS)\wedge_{\cS_0}\cS)\\
 =&
 \begin{cases}
 \rho(\cS,f(\cS')\wedge_{\cS_0}\cS')=\rho(\cS',f(\cS')\wedge_{\cS_0}\cS')+\rho(\cS,\cS')
 \\
 \hspace*{104pt}
 \text{if }
 \overrightarrow{v}\not\in\{\overrightarrow{\cS'\cS_0},\overrightarrow{\cS'f(\cS')}\},\\
 \rho(\cS,\cS)\equiv 0
 \hspace*{50pt}
 \text{if }
 \overrightarrow{v}\in\{\overrightarrow{\cS'\cS_0},\overrightarrow{\cS'f(\cS')}\}
 \text{ and }\overrightarrow{\cS'\cS_0}\neq\overrightarrow{\cS'f(\cS')},\\
 \rho(\cS,f(\cS')\wedge_{\cS_0}\cS')=\rho(\cS',f(\cS')\wedge_{\cS_0}\cS')-\rho(\cS,\cS')\\
 \hspace*{104pt}
 \text{if }
 \overrightarrow{v}=\overrightarrow{\cS'\cS_0}=\overrightarrow{\cS'f(\cS')}
 \end{cases}
\end{align*}
on $[\cS',\cS'']$
(cf.\ the first two cases in \eqref{eq:imagewedge}). 
Hence for every $\cS'\in\Gamma\cap\sH^1$ not fixed by $f$
and every $\cS_0\in(\sH^1\setminus(r_{\sP^1,\Gamma})^{-1}(\cS'))
\cap U_{\overrightarrow{f(\cS')\cS'}}$, we have
\begin{align}
\notag&\sum_{\overrightarrow{v}\in T_{\cS'}\Gamma}\rd_{\overrightarrow{v}}
\bigl(\cS\mapsto\rho(\cS,f(\cS)\wedge_{\cS_0}\cS)\bigr)\\ 
\notag=&\begin{cases}
\#(T_{\cS'}\Gamma\setminus\{\overrightarrow{\cS'\cS_0}\}) & \text{if }r_{\sP^1,\Gamma}(f(\cS'))=\cS',\\
\#(T_{\cS'}\Gamma\setminus\{\overrightarrow{\cS'\cS_0},\overrightarrow{\cS'f(\cS')}\}) & \text{if }r_{\sP^1,\Gamma}(f(\cS'))\neq\cS'\text{ and }\overrightarrow{\cS'\cS_0}\neq\overrightarrow{\cS'f(\cS')},\\
\#(T_{\cS'}\Gamma\setminus\{\overrightarrow{\cS'\cS_0}\})-1 & \text{if }r_{\sP^1,\Gamma}(f(\cS'))\neq\cS'\text{ and }\overrightarrow{\cS'\cS_0}=\overrightarrow{\cS'f(\cS')}
  \end{cases}\\
=&\begin{cases}
   v_{\Gamma}(\cS')-1\in\bN\cup\{0\} & \text{if }r_{\sP^1,\Gamma}(f(\cS'))=\cS',\\
   v_{\Gamma}(\cS')-2\in\bN\cup\{0,-1\} & \text{if }r_{\sP^1,\Gamma}(f(\cS'))\neq\cS'.
  \end{cases}\label{eq:nonfixedgeneral}
\end{align}

\begin{proof}[Proof of Theorem $\ref{th:weight}(i)$]
For every $\cS_0\in\sH^1$, we have already seen \eqref{eq:geometric}.

For every $\cS_0\in\sH^1$ and every $\overrightarrow{v}\in T_{\cS_0}\sP^1$,
by \eqref{eq:conjugate} and \eqref{eq:outsidetree}, we have
\begin{gather*}
 \rd_{\overrightarrow{v}}\Crucial_f=\frac{1}{2}+
\frac{\rd_{\overrightarrow{v}}(\cS\mapsto\rho(\cS,f(\cS)\wedge_{\cS_0}\cS))-(f^*\delta_{\cS_0})(U_{\overrightarrow{v}})}{d-1}.
\end{gather*}
Moreover, for every $\cS_0\in\Gamma\cap\sH^1$ and
every direction $\overrightarrow{v}\in T_{\cS_0}\Gamma$, 
by \eqref{eq:geometric} and $\cS_0\not\in U_{\overrightarrow{v}}$, we have
\begin{align*}
&(d-1)\cdot\bigl((\iota_{\Gamma,\sP^1})_*\nu_{f,\Gamma}\bigr)(U_{\overrightarrow{v}})\\
=&\bigl(\Delta_{\Gamma}(\cS\mapsto\rho(\cS,f(\cS)\wedge_{\cS_0}\cS))\bigr)(\Gamma\cap U_{\overrightarrow{v}})+(f^*\delta_{\cS_0}-\delta_{\cS_0})(U_{\overrightarrow{v}})\\
=&\bigl(\Delta_{\Gamma}(\cS\mapsto\rho(\cS,f(\cS)\wedge_{\cS_0}\cS))\bigr)(\Gamma\cap U_{\overrightarrow{v}})+(f^*\delta_{\cS_0})(U_{\overrightarrow{v}}),
\end{align*}
and setting $\Gamma':=\Gamma\cap(U_{\overrightarrow{v}}\cup\{\cS_0\})$,
by \eqref{eq:outer}, we also have
\begin{align*}
 &\bigl(\Delta_{\Gamma}(\cS\mapsto\rho(\cS,f(\cS)\wedge_{\cS_0}\cS))\bigr)(\Gamma\cap U_{\overrightarrow{v}})\\
=&\bigl(\Delta_{\Gamma'}(\cS\mapsto\rho(\cS,f(\cS)\wedge_{\cS_0}\cS))\bigr)
(\Gamma')-\rd_{\overrightarrow{v}}(\cS\mapsto\rho(\cS,f(\cS)\wedge_{\cS_0}\cS))
\\
=&0-\rd_{\overrightarrow{v}}(\cS\mapsto\rho(\cS,f(\cS)\wedge_{\cS_0}\cS))
=-\rd_{\overrightarrow{v}}(\cS\mapsto\rho(\cS,f(\cS)\wedge_{\cS_0}\cS)).
\end{align*}
Hence \eqref{eq:slopeinside} also holds.
\end{proof}

\begin{proof}[Proof of Theorem $\ref{th:weight}(ii)$]
For every $\cS'\in\Gamma\cap\sH^1$, 
(fixing $\cS_0\in\sH^1\setminus((r_{\sP^1,\Gamma})^{-1}(\cS')\cup\{f(\cS')\})$,)
by \eqref{eq:alwaysgeneral}, \eqref{eq:fixedcasegeneral},
and \eqref{eq:nonfixedgeneral},
we always have 
$(d-1)\cdot\nu_{f,\Gamma}(\{\cS'\})\in\bN\cup\{0,-1\}$,
which is also the case for every $\cS'\in\Gamma\cap\bP^1$ by \eqref{eq:LaplacianI}. 
By the convexity of $\Crucial_f$ on $(\sH^1,\rho)$, 
\eqref{eq:rangecrucial}, and \eqref{eq:LaplacianI}, 
$\nu_{f,\Gamma}$ is supported on (a finite subset)
in $\Gamma\setminus\{\text{type I or IV points fixed by }f\}$. 

Next, fix $\cS'\in\Gamma\setminus\{\text{type I or IV points not fixed by }f\}$.
If $(f(\cS')=)\cS'\in\bP^1$, then
$\nu_{f,\Gamma}(\{\cS'\})=0\ge 0$. 
If $f(\cS')=\cS'\in\sH^1$, then 
$\nu_{f,\Gamma}(\{\cS'\})\ge 0$ by \eqref{eq:alwaysgeneral} and
\eqref{eq:fixedcasegeneral} (fixing $\cS_0\in\sH^1\setminus((r_{\sP^1,\Gamma})^{-1}(\cS')\cup\{f(\cS')\})$).

Suppose now that $f(\cS')\neq\cS'\in\sH^1$.
{\bfseries (Subcase a)} Either if $\#T_{\cS'}\Gamma\ge 2$ or if
$\#T_{\cS'}\Gamma=1$ but $r_{\sP^1,\Gamma}(f(\cS'))=\cS'$,
then $\nu_{f,\Gamma}(\{\cS'\})\ge 0$ 
by \eqref{eq:alwaysgeneral} and \eqref{eq:nonfixedgeneral}
(fixing $\cS_0\in(\sH^1\setminus(r_{\sP^1,\Gamma})^{-1}(\cS'))
\cap U_{\overrightarrow{f(\cS')\cS'}}$).
{\bfseries (Subcase b)} If $\#T_{\cS'}\Gamma=1$,
$r_{\sP^1,\Gamma}(f(\cS'))\neq\cS'$, but
$f^{-1}\bigl(U_{\overrightarrow{\cS'f(\cS')}}
\cap U_{\overrightarrow{f(\cS')\cS'}}
\bigr)\not\subset 
U_{\overrightarrow{\cS'f(\cS')}}$, then 
there are such $\cS_0
\in(\sH^1\setminus(r_{\sP^1,\Gamma})^{-1}(\cS'))\cap U_{\overrightarrow{f(\cS')\cS'}}
(=U_{\overrightarrow{\cS'f(\cS')}}
\cap U_{\overrightarrow{f(\cS')\cS'}})$ and $\overrightarrow{w}
\in(T_{\cS'}\sP^1)\setminus\bigl\{\overrightarrow{\cS'f(\cS')}\bigr\}
(=(T_{\cS'}\sP^1)\setminus(T_{\cS'}\Gamma))$
that $(f^*\delta_{\cS_0})(U_{\overrightarrow{w}})\in\bN$.
Then we have $\nu_{f,\Gamma}(\{\cS'\})\ge (-1)+1=0$
by \eqref{eq:alwaysgeneral} and \eqref{eq:nonfixedgeneral}.
\end{proof}

\begin{proof}[Proof of Theorem $\ref{th:weight}(iii)$]
Suppose also that no end points of $\Gamma$ are type III and
that $\nu_{f,\Gamma}\ge 0$ on $\Gamma$ 
(so $\nu_{f,\Gamma}$ is a probability measure on $\Gamma$). Then
by \eqref{eq:LaplacianI}, we have 
$\BC_{\Gamma}(\nu_{f,\Gamma})\subset\sH^1$, so
by \eqref{eq:slopeinside}, 
the first equality in \eqref{eq:mrlessential} holds. 

By \eqref{eq:slopeI} and 
the locally affine continuity of $\Crucial_f$ on $(\sP^1,\tilde{\rho})$
at every type III point, 
$\Gamma':=(\Crucial_f|(\Gamma\cap\sH^1))^{-1}(\min_{\Gamma\cap\sH^1}\Crucial_f)$
is a finite subtree 
in $\Gamma$ 
all end points of which are type II, and 
by \eqref{eq:slopeinside}, for every $\cS\in\Gamma'$ and every
$\overrightarrow{v}\in T_{\cS}\Gamma'$, we have
$((\iota_{\Gamma,\sP^1})_*\nu_{f,\Gamma})(U_{\overrightarrow{v}})=1/2$.
Hence, since $U_{\overrightarrow{v}}\cap U_{\overrightarrow{w}}=\emptyset$ 
for any distinct directions 
$\overrightarrow{v},\overrightarrow{w}\in T_{\cS}\Gamma'$
(and $\nu_{f,\Gamma}$ is a probability measure on $\Gamma$),
we have $v_{\Gamma'}\le 2$ on $\Gamma'$, so
there is a (unique) subset $S_{f,\Gamma}$ in $\Gamma$
consisting of at most two type II points such that
$\Gamma'=\Gamma_{S_{f,\Gamma}}$.
Hence the latter equality in \eqref{eq:mrlessential} holds.
By \eqref{eq:rangecrucial},
we have $\#S_{f,\Gamma}=1$ if $d$ is even (i.e., when $d+1$ is odd).
By the convexity of $\Crucial_f$ on $(\sH^1,\rho)$, we also have $\#S_{f,\Gamma}=1$
if $\min_{\Gamma\cap\sH}\Crucial_f>\min_{\sH^1}\Crucial_f$.

Let us see the remaining \eqref{eq:interioressential}.
For any $\cS\in S_{f,\Gamma}$, 
if $\nu_{\Gamma_{\supp(\nu_{f,\Gamma})}}(\{\cS\})=0$, or equivalently,
if $T_{\cS}\Gamma_{\supp(\nu_{f,\Gamma})}$ consists of 
exactly two directions $\overrightarrow{v_1},\overrightarrow{v_2}$, then
$\rd_{\overrightarrow{v_i}}\Crucial_f>0$
for at least one of $i\in\{1,2\}$, so that using \eqref{eq:slopeinside},
we have 
\begin{gather*}
 \nu_{f,\Gamma}(\{\cS\})=1-((\iota_{\Gamma_{\FR},\sP^1})_*\nu_{f,\Gamma})(U_{\overrightarrow{v_1}}\cup U_{\overrightarrow{v_2}})>1-(1/2+1/2)=0.
\end{gather*}
Hence $S_{f,\Gamma}\subset\Gamma_{S_{f,\Gamma}}\cap(\supp(\nu_{f,\Gamma})\cup\supp(\nu_{\Gamma_{\supp(\nu_{f,\Gamma})}}))$.
Moreover, for every $\cS\in\Gamma_{S_{f,\Gamma}}\setminus S_{f,\Gamma}$,
we have $\#T_{\cS}\Gamma_{S_{f,\Gamma}}=2$, so
writing $T_{\cS}\Gamma_{S_{f,\Gamma}}=\{\overrightarrow{v_1},\overrightarrow{v_2}\}$ and using \eqref{eq:slopeinside},
we have $((\iota_{\Gamma_{\FR},\sP^1})_*\nu_{f,\Gamma})(U_{\overrightarrow{v_i}})
=1/2$ for every $i\in\{1,2\}$ and
$((\iota_{\Gamma_{\FR},\sP^1})_*\nu_{f,\Gamma})(U_{\overrightarrow{w}})=0$
for every $\overrightarrow{w}\in T_{\sP^1}\Gamma\setminus\{\overrightarrow{v_1},\overrightarrow{v_2}\}$. Then we have not only
 $\nu_{f,\Gamma}(\{\cS\})=1-((\iota_{\Gamma_{\FR},\sP^1})_*\nu_{f,\Gamma})(U_{\overrightarrow{v_1}}\cup U_{\overrightarrow{v_2}})=1-(1/2+1/2)=0$
but also $\nu_{\Gamma_{\supp(\nu_{f,\Gamma})}}(\{\cS\})=0$.
Hence we also have $\Gamma_{S_{f,\Gamma}}\cap(\supp(\nu_{f,\Gamma})\cup\supp(\nu_{\Gamma_{\supp(\nu_{f,\Gamma})}}))\subset S_{f,\Gamma}$.
\end{proof}

\section{Proof of Theorem \ref{th:defining}}
\label{sec:explicit}
Let $f\in K(z)$ be of degree $d>1$. 
The following notation and convention will be useful.

\begin{notation}\label{th:divisor}
Let $\mathcal{K}$ be any field. For every $h\in\mathcal{K}(z)$ of degree $>0$ 
and every $\ell\in\mathcal{K}(z)$ other than $h$,
let $[h=\ell]$ be the effective $\mathcal{K}$-divisor on 
$\bP^1(\overline{\mathcal{K}})$ defined by the solutions to
the algebraic equation $h=\ell$ on $\bP^1(\overline{\mathcal{K}})$, 
taking into account their multiplicities. When $\mathcal{K}=K$, 
the divisor $[h=\ell]$ on $\bP^1=\bP^1(K)$ can also be regarded as the 
positive Radon measure 
\begin{gather*}
 \sum_{a\in\bP^1:h(a)=\ell(a)}(\ord_a[h=\ell])\cdot\delta_a\quad
\text{on }\sP^1.
\end{gather*}
\end{notation}

Let us begin by recalling the following {\itshape semiglobal} result by
Rivera-Letelier \cite[Lemma 2.1]{Juan03} and
Rumely's foundational results from \cite{Rumely14}.

\begin{theorem}[{\cite[Lemma 2.1]{Juan03}}]\label{th:semiglobal}
 Let $h\in K(z)$ be of degree $>0$.
 For every $\cS'\in\sP^1$ and every $\overrightarrow{v}\in T_{\cS'}\sP^1$, 
 there is $s_{\overrightarrow{v}}(h)\in\bN\cup\{0\}$,
 which we call the {\em surplus local degree}
of $h$ on $U_{\overrightarrow{v}}$,
 such that for every $\cS\in\sP^1\setminus\{h(\cS')\}$, 
 \begin{gather}
 (h^*\delta_{\cS})(U_{\overrightarrow{v}})
 =s_{\overrightarrow{v}}(h)
 +\begin{cases}
  m_{\overrightarrow{v}}(h) 
&\text{if }h_*(\overrightarrow{v})=\overrightarrow{h(\cS')\cS},\\
  0 &\text{otherwise}.
 \end{cases}\label{eq:preimages}
 \end{gather}
In particular, for every $\cS'\in\sP^1$,
we have
\begin{gather}
 \sum_{\overrightarrow{v}\in T_{\cS'}\sP^1}s_{\overrightarrow{v}}(h)=\deg h-\deg_{\cS}(h),\label{eq:surplustotal}
\end{gather}
by fixing $\cS\in\sP^1\setminus\{h(\cS')\}$,
summing \eqref{eq:preimages} up over all $\overrightarrow{v}\in T_{\cS'}\sP^1$,
and \eqref{eq:directionaldegree}. 
\end{theorem}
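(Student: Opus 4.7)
The plan is to establish \eqref{eq:preimages} by a two-stage local analysis at $\cS'$: first, for each $\overrightarrow{w}\in T_{h(\cS')}\sP^1$, show that $\cS\mapsto(h^*\delta_{\cS})(U_{\overrightarrow{v}})$ is constant on the component $U_{\overrightarrow{w}}$ of $\sP^1\setminus\{h(\cS')\}$, and second, compute this constant by taking $\cS$ close to $h(\cS')$ and separating the preimages of $\cS$ inside $U_{\overrightarrow{v}}$ into those that approach $\cS'$ (the $\overrightarrow{w}$-dependent part) and those that stay bounded away from $\cS'$ (the $\overrightarrow{w}$-independent ``surplus''). For the first stage, set $V:=U_{\overrightarrow{w}}$ and observe that $\partial U_{\overrightarrow{v}}=\{\cS'\}$ in $\sP^1$; since $h(\cS')\notin V$ we have $\cS'\notin h^{-1}(V)$, so $U_{\overrightarrow{v}}\cap h^{-1}(V)$ is both open and closed in $h^{-1}(V)$, hence a union of connected components of $h^{-1}(V)$. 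The constancy property recalled in Section~\ref{sec:intro} then yields the constancy of $\cS\mapsto(h^*\delta_{\cS})(U_{\overrightarrow{v}})$ on $V$; denote its value by $N_{\overrightarrow{v},\overrightarrow{w}}(h)$.

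For the second stage, choose a connected open neighborhood $W$ of $h(\cS')$ small enough that $h^{-1}(W)$ decomposes into finitely many connected components $U_0,U_1,\ldots,U_k$, with $\cS'\in U_0$ and each $U_j$ a small neighborhood of exactly one preimage of $h(\cS')$; such a $W$ exists since $h$ is open and $h^{-1}(h(\cS'))$ is finite. Since $\partial U_{\overrightarrow{v}}=\{\cS'\}\subset U_0$, each $U_j$ with $j\ge 1$ lies either entirely inside $U_{\overrightarrow{v}}$ or entirely outside of it. For $\cS\in W\cap U_{\overrightarrow{w}}$, the preimages of $\cS$ inside $U_{\overrightarrow{v}}$ split into those in $U_0\cap U_{\overrightarrow{v}}$ (close to $\cS'$) and those in $\bigsqcup_{j\ge 1,\,U_j\subset U_{\overrightarrow{v}}}U_j$ (away from $\cS'$). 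The latter contribute a total multiplicity $s_{\overrightarrow{v}}(h):=\sum_{j\ge 1,\,U_j\subset U_{\overrightarrow{v}}}\deg(h|U_j)$ that is manifestly independent of $\overrightarrow{w}$; the former, by the local expansion at $\cS'$ provided by Theorem~\ref{th:local} together with \eqref{eq:directionaldegree}, contribute $m_{\overrightarrow{v}}(h)$ when $h_*(\overrightarrow{v})=\overrightarrow{w}$ and $0$ otherwise. Combining these two contributions yields \eqref{eq:preimages}.

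The ``in particular'' assertion \eqref{eq:surplustotal} then follows by fixing any $\cS\neq h(\cS')$, summing \eqref{eq:preimages} over $\overrightarrow{v}\in T_{\cS'}\sP^1$, and using both $\sum_{\overrightarrow{v}}(h^*\delta_{\cS})(U_{\overrightarrow{v}})=(h^*\delta_{\cS})(\sP^1\setminus\{\cS'\})=\deg h$ (since $\cS'\notin h^{-1}(\cS)$) and \eqref{eq:directionaldegree}. The most delicate step is the identification in the second stage of the ``close to $\cS'$'' contribution as $m_{\overrightarrow{v}}(h)$ when $h_*(\overrightarrow{v})=\overrightarrow{w}$ and as $0$ otherwise: one must take $W$ (and hence $U_0$) small enough that the preimages of every $\cS\in W\cap U_{\overrightarrow{w}}$ lying in $U_0$ distribute themselves among the directions at $\cS'$ in accordance with the directional-degree decomposition \eqref{eq:directionaldegree}. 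For $\cS'$ of type I or IV this is immediate since $\#T_{\cS'}\sP^1=1$, for type III it uses Theorem~\ref{th:indifferent} and \eqref{eq:directionaldegree}, and for type II it reduces, after conjugating by elements of $\PGL(2,K)$ bringing $\cS'$ and $h(\cS')$ to $\cS_{\can}$ as in Definition~\ref{th:directionaldegree}, to the corresponding multiplicity statement for the reduction $\widetilde{\psi\circ h\circ\phi^{-1}}\in k(\zeta)$ on $\bP^1(k)$ combined with the expansion formula \eqref{eq:expansion}.
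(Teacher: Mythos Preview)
The paper does not give its own proof of \eqref{eq:preimages}: Theorem~\ref{th:semiglobal} is stated as a citation of Rivera-Letelier \cite[Lemma 2.1]{Juan03}, and only the ``in particular'' clause \eqref{eq:surplustotal} carries an inline justification (fix $\cS\neq h(\cS')$, sum \eqref{eq:preimages} over $\overrightarrow{v}\in T_{\cS'}\sP^1$, and apply \eqref{eq:directionaldegree}). Your derivation of \eqref{eq:surplustotal} reproduces that argument exactly.

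Your sketch for \eqref{eq:preimages} itself is a reasonable independent proof along standard lines, and there is nothing in the paper to compare it against. Two small remarks on the sketch. First, your invocation of Theorem~\ref{th:indifferent} for the type~III case is misplaced: that result concerns \emph{fixed} points of maps of degree $>1$, neither hypothesis being assumed here. What you actually need (and what suffices) is only that $m_{\overrightarrow{v}}(h)=\deg_{\cS'}(h)$ for each of the two directions together with \eqref{eq:directionaldegree}, which forces $h_*$ to be a bijection $T_{\cS'}\sP^1\to T_{h(\cS')}\sP^1$, plus the local homeomorphism on segments from Theorem~\ref{th:local}. Second, the genuinely delicate point you flag---that for $W$ small enough the preimages of $\cS\in W\cap U_{\overrightarrow{w}}$ lying in $U_0$ distribute among directions with multiplicities $m_{\overrightarrow{v}}(h)$---amounts to the assertion that, shrinking $U_0$, each $U_0\cap U_{\overrightarrow{v}}$ is mapped by $h$ onto an open set in $U_{h_*(\overrightarrow{v})}$ with degree $m_{\overrightarrow{v}}(h)$. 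This is not a formal consequence of \eqref{eq:expansion} (which is only about segments) but is the content of Rivera-Letelier's local analysis; your reduction in the type~II case to the multiplicity statement for $\widetilde{\psi\circ h\circ\phi^{-1}}$ on $\bP^1(k)$ is the right move, but a complete argument would spell out why nearby Berkovich open balls in direction $\overrightarrow{v}$ map onto nearby Berkovich open balls in direction $h_*(\overrightarrow{v})$ with the stated degree.
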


\begin{theorem}[{\cite[Lemma 2.1, the first identification lemma]{Rumely14}}]\label{th:1st3rd}
 Let $f\in K(z)$ be of degree $d>1$, and $\cS'\in\sP^1$ 
 be a type {\em II} fixed point of $f$.
 If $\widetilde{ h\circ f\circ h^{-1}}\neq\Id_{\bP^1(k)}$
 for some $($and indeed any$)$ $ h\in\PGL(2,K)$ 
 sending $\cS'$ to $\cS_{\can}$, then 
 for every $\overrightarrow{v}=\overrightarrow{v}_a^{( h)}
 \in T_{\cS'}\sP^1$ where $a\in\bP^1$ $($see \S$\ref{sec:tangent})$, 
 we have
 \begin{gather}
 [f=\Id_{\bP^1}](U_{\overrightarrow{v}})=s_{\overrightarrow{v}}(f)
 +\ord_{\tilde{a}}[\widetilde{ h\circ f\circ h^{-1}}=\Id_{\bP^1(k)}].
 \label{eq:first}
 \end{gather}
Summing \eqref{eq:first} 
up over all $\overrightarrow{v}\in T_{\cS'}\sP^1$, also 
using \eqref{eq:surplustotal},
we have
\begin{gather}
 \sum_{\tilde{a}\in\bP^1(k)}\ord_{\tilde{a}}[\widetilde{ h\circ f\circ h^{-1}}=\Id_{\bP^1(k)}]
=\deg_{\cS'}(f)+1\ge 2>0.\label{eq:fixedreduction}
\end{gather}
\end{theorem}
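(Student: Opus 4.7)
The strategy is to reduce to the Gauss point and then analyze a single homogeneous fixed-point form modulo $\mathfrak{m}_K$. Note first that the hypothesis $\widetilde{\phi\circ f\circ\phi^{-1}}\neq\Id_{\bP^1(k)}$ is independent of the choice of $\phi\in\PGL(2,K)$ sending $\cS'$ to $\cS_{\can}$: any two such $\phi$ differ by an element of the stabilizer $\PGL(2,\cO_K)$ of $\cS_{\can}$, whose reduction acts on $\bP^1(k)$ as an element of $\PGL(2,k)$, and conjugation by the latter sends $\Id_{\bP^1(k)}$ only to itself. Fix such a $\phi$ and transport all data by $\phi$, so that $\cS'=\cS_{\can}$ and $\phi=\Id$. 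Let $F=(F_0,F_1)\in\cO_K[p_0,p_1]^2$ be a minimal lift of $f$, and introduce the homogeneous \emph{fixed-point form}
\[
\Phi(p_0,p_1):=p_0F_1(p_0,p_1)-p_1F_0(p_0,p_1)\in\cO_K[p_0,p_1]
\]
of degree $d+1$, whose zero divisor on $\bP^1$ (with multiplicities) equals $[f=\Id_{\bP^1}]$. Writing $\tilde{G}:=\GCD(\tilde{F}_0,\tilde{F}_1)$ and $\tilde{F}_i=\tilde{G}\tilde{F}_i^*$ in $k[p_0,p_1]$, so that $\tilde{f}=\tilde{F}_1^*/\tilde{F}_0^*$, gives the key factorization
\[
\tilde{\Phi}=\tilde{G}\cdot(p_0\tilde{F}_1^*-p_1\tilde{F}_0^*),
\]
in which the hypothesis $\tilde{f}\neq\Id_{\bP^1(k)}$ forces the second factor, hence $\tilde{\Phi}$ itself, to be nonzero; in particular some coefficient of $\Phi$ has absolute value $1$.

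Next, apply the standard reduction principle for homogeneous forms with a coefficient of unit norm: for each $\tilde a\in\bP^1(k)$, the number of zeros of $\Phi$ in $\bP^1$ with reduction $\tilde a$, counted with multiplicity, equals $\ord_{\tilde a}(\tilde\Phi)$. Since $U_{\overrightarrow{v}_a^{(\Id)}}\cap\bP^1=\{b\in\bP^1:\tilde b=\tilde a\}$, this yields
\[
[f=\Id_{\bP^1}](U_{\overrightarrow{v}_a^{(\Id)}})=\ord_{\tilde{a}}(\tilde{\Phi})=\ord_{\tilde{a}}(\tilde{G})+\ord_{\tilde{a}}[\tilde{f}=\Id_{\bP^1(k)}],
\]
and \eqref{eq:first} follows once we identify the first summand with the surplus local degree $s_{\overrightarrow{v}_a^{(\Id)}}(f)$.

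For this remaining identification, which is the main obstacle, pick any $\tilde b\in\bP^1(k)\setminus\{\tilde f(\tilde a)\}$ (possible since the algebraically closed residue field $k$ is infinite) and any type I point $b$ reducing to $\tilde b$. Then $f_*(\overrightarrow{v}_a^{(\Id)})=\overrightarrow{v}_{\tilde f(\tilde a)}^{(\Id)}\neq\overrightarrow{v}_b^{(\Id)}$, so Theorem~\ref{th:semiglobal} gives $(f^*\delta_b)(U_{\overrightarrow{v}_a^{(\Id)}})=s_{\overrightarrow{v}_a^{(\Id)}}(f)$. On the other hand the same quantity counts, with multiplicity $\deg_c(f)$, the solutions $c\in U_{\overrightarrow{v}_a^{(\Id)}}\cap\bP^1$ of $F_1(1,z)=bF_0(1,z)$; a Hensel-type count on this open unit disk about $a$ — in which the non-GCD factor $\tilde F_1^*(1,z)-\tilde b\tilde F_0^*(1,z)$ is a unit at $\tilde a$ by the very choice $\tilde b\neq\tilde f(\tilde a)$ — shows this count equals $\ord_{\tilde a}(\tilde G)$. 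Hence $s_{\overrightarrow{v}_a^{(\Id)}}(f)=\ord_{\tilde a}(\tilde G)$ and \eqref{eq:first} holds. The sum-over-directions version $\sum_{\overrightarrow{v}}s_{\overrightarrow{v}}(f)=\deg\tilde G=d-\deg\tilde f$ is already immediate from \eqref{eq:surplustotal}; the real difficulty is precisely the localisation to a single direction just performed. Finally, \eqref{eq:fixedreduction} is obtained by summing \eqref{eq:first} over $\overrightarrow{v}\in T_{\cS'}\sP^1\cong\bP^1(k)$ and using \eqref{eq:surplustotal}, the total fixed-point count $\sum_{a\in\bP^1,f(a)=a}\ord_a[f=\Id_{\bP^1}]=d+1$, and the identity $\deg\tilde f=\deg_{\cS'}(f)$.
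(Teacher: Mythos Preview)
The paper does not supply its own proof of this theorem: it is quoted as Rumely's first identification lemma \cite[Lemma 2.1]{Rumely14}, and the only argument the paper itself gives is the derivation of \eqref{eq:fixedreduction} from \eqref{eq:first} by summing over $T_{\cS'}\sP^1$ and invoking \eqref{eq:surplustotal}, which you reproduce verbatim. Your proof of \eqref{eq:first} is correct and follows the standard route---the homogeneous fixed-point form $\Phi=p_0F_1-p_1F_0$, its factorization $\tilde\Phi=\tilde G\cdot(p_0\tilde F_1^*-p_1\tilde F_0^*)$ over $k$, the Newton-polygon/Hensel count of zeros in each residue disk, and the identification $s_{\overrightarrow{v}_a}(f)=\ord_{\tilde a}(\tilde G)$ via Theorem~\ref{th:semiglobal}---which is essentially Rumely's original argument. (Note that ``$[h=\Id_{\bP^1}]$'' in the displayed statement is a typo for ``$[f=\Id_{\bP^1}]$'', as you correctly read it.)
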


\begin{theorem}[{\cite[Lemma 2.2, the second identification lemma]{Rumely14}}]\label{th:2nd}
Let $f\in K(z)$ be of degree $d>1$, $\cS'\in\sP^1$
be a type {\em II} point not fixed by $f$, and
$\overrightarrow{v}\in T_{\cS'}\sP^1$.
Then $U_{\overrightarrow{v}}\cap\Fix(f)\neq\emptyset$ if and only if
at least one of the following is the case$;$ 
$($a$)$ $\overrightarrow{v}=\overrightarrow{\cS'f(\cS')}$,
$($b$)$ $f_*(\overrightarrow{v})=\overrightarrow{f(\cS')\cS'}$,
$($c$)$ $s_{\overrightarrow{v}}(f)>0$. 
\end{theorem}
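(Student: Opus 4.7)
The plan is to reduce to $\cS' = \cS_{\can}$ using the transitive action of $\PGL(2,K)$ on type II points, which preserves fixed-point divisors, tangent maps, local degrees and surplus degrees under conjugation. Set $\cS_1 := f(\cS_{\can}) \neq \cS_{\can}$, identify $T_{\cS_{\can}}\sP^1$ with $\bP^1(k)$ in the canonical way, and write $\overrightarrow{v} = \overrightarrow{v}_{\tilde{a}}^{(\Id)}$ for some $\tilde{a} \in \bP^1(k)$. Choose $\phi \in \PGL(2,K)$ with $\phi(\cS_1) = \cS_{\can}$ and put $g := \phi \circ f$. Then $g(\cS_{\can}) = \cS_{\can}$, the reduction $\tilde{g} \in k(\zeta)$ is well defined of degree $\deg_{\cS_{\can}}(f)$ (Definition \ref{th:directionaldegree}), and the fixed-point equation $f(z) = z$ is equivalent to $g(z) = \phi(z)$, so the divisors $[f = \Id_{\bP^1}]$ and $[g = \phi]$ on $\bP^1$ coincide, with total mass $d + 1$.

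\textbf{Key formula.} Because $\phi$ does not stabilize $\cS_{\can}$, its reduction $\tilde{\phi}$ collapses to a single constant $\tilde{c}_0 \in \bP^1(k)$; using $\phi^{-1}_*\bigl(\overrightarrow{\cS_{\can}\phi(\cS_{\can})}\bigr) = \overrightarrow{\cS_1\cS_{\can}}$ and the identification of $\tilde{g}$ with the tangent map $g_* = \phi_* \circ f_*$, one checks that $\tilde{g}(\tilde{a}) = \tilde{c}_0$ if and only if $f_*(\overrightarrow{v}) = \overrightarrow{\cS_1\cS_{\can}} = \overrightarrow{f(\cS')\cS'}$, namely condition (b). The plan is then to establish the identification
\begin{equation*}
[f = \Id_{\bP^1}](U_{\overrightarrow{v}}) \;=\; s_{\overrightarrow{v}}(f) \;+\; \ord_{\tilde{a}}[\tilde{g} = \tilde{c}_0] \;+\; \mathbf{1}_{\{\overrightarrow{v} = \overrightarrow{\cS_{\can}\cS_1}\}},
\end{equation*}
so that the three non-negative summands detect conditions (c), (b), (a) respectively, and the stated ``iff'' follows from the positivity of the left-hand side being equivalent to the positivity of at least one summand. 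As a consistency check, summing over $\overrightarrow{v} \in T_{\cS_{\can}}\sP^1$ and using \eqref{eq:surplustotal} together with $\deg\tilde{g} = \deg_{\cS_{\can}}(f)$ yields $(d - \deg_{\cS_{\can}}(f)) + \deg_{\cS_{\can}}(f) + 1 = d+1$, which agrees with the total fixed-point count of $f$ on $\bP^1$.

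\textbf{Derivation and main obstacle.} The surplus term is the direct output of Theorem \ref{th:semiglobal} applied with $h = f$ and $\cS = \cS_1$, counting the ``extra'' preimages of $\cS_1$ lying in $U_{\overrightarrow{v}}$ which translate, via the identity $[g = \phi]$-on-$\bP^1$ and the tangent-map decomposition, into fixed-point contributions. The middle summand is obtained by repeating verbatim the argument of Theorem \ref{th:1st3rd} (the first identification lemma) modulo $\mathfrak{m}_K$, since that computation depends only on the reduction $\tilde{g}$ being well defined of the right degree and not on $\cS'$ being fixed; the only change is that the target value $\Id_{\bP^1(k)}$ in \eqref{eq:first} is replaced here by the constant $\tilde{c}_0$. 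The main technical obstacle is justifying the indicator ``$+1$'' in case (a): this captures a fixed point that is invisible to both Rivera--Letelier's surplus count and the reduced equation, because the direction $\overrightarrow{\cS_{\can}\cS_1}$ is precisely the exceptional direction where $\tilde{\phi}$ would naively be indeterminate. Concretely, the polynomial $g(z) - \phi(z)$ has degree $d+1$ in a coordinate adapted to $[\cS_{\can},\cS_1]$ while its reduction $\tilde{g}(\zeta) - \tilde{c}_0$ has degree only $\deg_{\cS_{\can}}(f)$, and a Newton-polygon analysis of this difference along the annulus between $\cS_{\can}$ and $\cS_1$ pins down exactly one ``escaping'' root in $U_{\overrightarrow{\cS_{\can}\cS_1}}$ beyond those already accounted for by $s_{\overrightarrow{v}}(f)$; this extra root is the content of the indicator and completes the formula.
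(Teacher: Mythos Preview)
The paper does not prove this statement; it is quoted from Rumely \cite[Lemma 2.2]{Rumely14} and used as an input to the proof of Theorem~\ref{th:defining}. There is therefore no proof in the paper to compare against.

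Your strategy---reducing to $\cS'=\cS_{\can}$, postcomposing with $\phi$ so that $g=\phi\circ f$ fixes $\cS_{\can}$, and then proving the refined count
\[
[f=\Id_{\bP^1}](U_{\overrightarrow{v}})
= s_{\overrightarrow{v}}(f)+\ord_{\tilde a}[\tilde g=\tilde c_0]+\mathbf 1_{\{\overrightarrow{v}=\overrightarrow{\cS_{\can}\cS_1}\}}
\]
---is in the spirit of Rumely's original argument and, if established, is stronger than the bare ``iff'' in the statement. Your identifications of the three summands with conditions (c), (b), (a) and the global consistency check are correct.

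Two steps remain genuine obligations rather than formalities. First, you invoke ``repeating verbatim the argument of Theorem~\ref{th:1st3rd}'' for the middle summand, but that lemma is also only cited here, not proved, and more importantly its hypothesis is that the reduced map is compared against $\Id_{\bP^1(k)}$; here you compare $\tilde g$ against a \emph{constant} $\tilde c_0$, which changes the degree bookkeeping (the divisor $[\tilde g=\tilde c_0]$ has degree $\deg_{\cS_{\can}}(f)$ rather than $\deg_{\cS_{\can}}(f)+1$). You need to actually carry out the Weierstrass/Newton-polygon count of roots of a minimal lift of $g-\phi$ in each residue disk, not defer to \eqref{eq:first}. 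Second, the extra ``$+1$'' in direction $\overrightarrow{\cS_{\can}\cS_1}$ is the crux; your one-sentence Newton-polygon remark does not yet pin down why \emph{exactly one} additional root of $g(z)-\phi(z)$ lies in that disk (as opposed to none, or more, being absorbed into $s_{\overrightarrow v}(f)$). A clean way is to normalise so that $\cS_1$ lies in the direction $\tilde\infty$, write $\phi(z)=\alpha/(z-\beta)+\gamma$ with $|\alpha|>1$ or an equivalent form forcing the pole into $U_{\overrightarrow{\cS_{\can}\cS_1}}$, and then read off from the Newton polygon of the numerator of $g(z)-\phi(z)$ that precisely one root has absolute value $>1$ beyond those already accounted for by the surplus term. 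Until those two computations are written out, the proposal is a correct outline rather than a proof.
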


\begin{theorem}[{\cite[Lemma 4.5, the third identification lemma]{Rumely14}}]\label{th:3rd}
 Let $f\in K(z)$ be of degree $d>1$, and $\cS'\in\sP^1$ 
 be a type {\em II} fixed point of $f$.
 If $\widetilde{ h\circ f\circ h^{-1}}=\Id_{\bP^1(k)}$
 for some 
 $ h\in\PGL(2,K)$ sending $\cS'$ to $\cS_{\can}$, then 
 we have $U_{\overrightarrow{v}}\cap
\Gamma_{\FR}
\neq\emptyset$
 for every such direction $\overrightarrow{v}\in T_{\cS'}\sP^1$
 that $s_{\overrightarrow{v}}(f)>0$.
\end{theorem}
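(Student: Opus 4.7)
After $\PGL(2,K)$-equivariant conjugation we may replace $f$ by $\phi\circ f\circ\phi^{-1}$ and assume $\cS'=\cS_{\can}$ and $\tilde{f}=\Id_{\bP^1(k)}$, since the directions at $\cS'$, the disks $U_{\overrightarrow{v}}$, the surplus degrees $s_{\overrightarrow{v}}$, and the subtree $\Gamma_{\FR}$ all transform $\PGL(2,K)$-equivariantly. Since $\tilde{f}=\Id_{\bP^1(k)}$, Definition \ref{th:directionaldegree} gives $\deg_{\cS_{\can}}(f)=\deg\Id_{\bP^1(k)}=1$, $m_{\overrightarrow{w}}(f)=1$, and $f_*(\overrightarrow{w})=\overrightarrow{w}$ for every $\overrightarrow{w}\in T_{\cS_{\can}}\sP^1$. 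In particular, for any $\cS\in U_{\overrightarrow{v}}$ we have $\overrightarrow{\cS_{\can}\cS}=\overrightarrow{v}=f_*(\overrightarrow{v})$ and $f(\cS_{\can})=\cS_{\can}\neq\cS$, so the semiglobal formula \eqref{eq:preimages} of Theorem \ref{th:semiglobal} yields
\[
(f^*\delta_{\cS})(U_{\overrightarrow{v}})=s_{\overrightarrow{v}}(f)+m_{\overrightarrow{v}}(f)=s_{\overrightarrow{v}}(f)+1\ge 2.
\]
Thus $f$ restricts to a proper surjective self-map $f|_{U_{\overrightarrow{v}}}\colon U_{\overrightarrow{v}}\to U_{\overrightarrow{v}}$ of constant fiber-degree $d':=s_{\overrightarrow{v}}(f)+1\ge 2$.

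The remaining task is to show that any such proper self-map of an open Berkovich disk of degree $\ge 2$ admits a fixed point in the disk belonging to the spanning set $\Fix(f)\cup\{\text{type II fixed }\cS\text{ with }\deg_{\cS}(f)>1\}$ of $\Gamma_{\FR}$. Invoking the Berkovich disk fixed-point theorem of Rivera-Letelier (or Benedetto), which asserts that any proper rational self-map of an open Berkovich disk of degree $\ge 2$ has a fixed point in the disk that is either classical (type I) or of type II with local degree $\ge 2$, yields such a fixed point, which therefore lies in $U_{\overrightarrow{v}}\cap\Gamma_{\FR}$ and proves the claim.

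\textbf{Main obstacle.} To carry out the disk-fixed-point step in an explicit, self-contained fashion, I would choose a coordinate so that $U_{\overrightarrow{v}}=\{z\in K:|z|<1\}$ (as Berkovich open disk) and fix a minimal lift $(H_0,H_1)$ of $f$; then the numerator $N(z):=H_1(1,z)-zH_0(1,z)$ of $f(z)-z$ controls the type I fixed points of $f$ in $U_{\overrightarrow{v}}$. The hypothesis $\tilde{f}=\Id_{\bP^1(k)}$ forces $\tilde{H}_1=z\tilde{H}_0$ after cancelling $\GCD(\tilde{H}_0,\tilde{H}_1)$, so $\tilde{N}=0$ in $k[z]$ and every coefficient of $N$ lies in $\mathfrak{m}_K$. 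The delicate part is then to translate $s_{\overrightarrow{v}}(f)>0$ (together with \eqref{eq:surplustotal}) into a Newton polygon for $N$ producing at least one root $\alpha$ with $|\alpha|<1$, i.e.\ a classical fixed point of $f$ inside $U_{\overrightarrow{v}}$. Pinning down the precise leading-coefficient behaviour of $N$ from knowledge of $s_{\overrightarrow{v}}(f)$ alone is where essentially all the bookkeeping sits, and is precisely the reason the three identification lemmas are stated as self-contained technical inputs rather than derived here.
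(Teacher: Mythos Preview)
The paper does not prove this statement at all: Theorem~\ref{th:3rd} is cited as \cite[Lemma 4.5]{Rumely14} and used as a black box, so there is no in-paper proof to compare your proposal against.

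That said, your proposed plan contains a genuine error. You claim that $f$ restricts to a proper surjective self-map $f|_{U_{\overrightarrow{v}}}\colon U_{\overrightarrow{v}}\to U_{\overrightarrow{v}}$ of degree $s_{\overrightarrow{v}}(f)+1$. This is false precisely because $s_{\overrightarrow{v}}(f)>0$. Apply the same formula \eqref{eq:preimages} to a point $\cS\in U_{\overrightarrow{w}}$ with $\overrightarrow{w}\neq\overrightarrow{v}$: since $f_*(\overrightarrow{v})=\overrightarrow{v}\neq\overrightarrow{w}=\overrightarrow{\cS_{\can}\cS}$, you get $(f^*\delta_{\cS})(U_{\overrightarrow{v}})=s_{\overrightarrow{v}}(f)>0$. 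Thus every point of $\sP^1\setminus\{\cS_{\can}\}$ already has a preimage in $U_{\overrightarrow{v}}$, so $f(U_{\overrightarrow{v}})\supset\sP^1\setminus\{\cS_{\can}\}\supsetneq U_{\overrightarrow{v}}$. The map $f|_{U_{\overrightarrow{v}}}$ is therefore \emph{not} a self-map of the disk, and the disk fixed-point theorem you invoke (which concerns self-maps $D\to D$) does not apply. What you have instead is the polynomial-like situation $U_{\overrightarrow{v}}\cap f^{-1}(U_{\overrightarrow{v}})\to U_{\overrightarrow{v}}$ of degree $s_{\overrightarrow{v}}(f)+1$, which is a different (and harder) input.

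Your ``Main obstacle'' paragraph is more honest about the difficulty, and the Newton-polygon route you sketch is in fact close in spirit to how such identification lemmas are proved; but note that the hoped-for conclusion ``a classical fixed point in $U_{\overrightarrow{v}}$'' is also too strong --- the statement only promises a point of $\Gamma_{\FR}$, which may well be a type II fixed point with $\deg_{\cS}(f)>1$ rather than a type I fixed point. Rumely's actual argument in \cite{Rumely14} proceeds by moving from $\cS'$ into $U_{\overrightarrow{v}}$ along the segment to locate a nearby type II fixed point where the reduction ceases to be the identity (or where the local degree jumps), and then feeds that point into the earlier identification lemmas.
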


Recall also that $\Gamma_{\FR}=\Gamma_{\FP}$ 
(Rumely's tree intersection theorem \eqref{eq:tree}) and 
$\Gamma_{\FR}\cap\bP^1=\Fix(f)$ (by the definition of $\Gamma_{\FR}$).

\subsection{Proof of \eqref{eq:computation} on $\Gamma_{\FR}$}
We note that every end point of $\Gamma_{\FR}=\Gamma_{\FP}$ 
is a type I or II point fixed by $f$.
The equality \eqref{eq:computation} on $\Gamma_{\FR}$ becomes
\begin{multline}
(d-1)\cdot\nu_{f,\Gamma_{\FR}}(\{\cS'\})=\\
\begin{cases}
\deg_{\cS'}(f)-1+\#\{\overrightarrow{v}\in T_{\cS'}\Gamma_{\FR}:f_*(\overrightarrow{v})\neq\overrightarrow{v}\} & \text{if }f(\cS')=\cS'\in\Gamma_{\FR}\cap\sH^1,\\
v_{\Gamma_{\FR}}(\cS')-2=(-2)\cdot\nu_{\Gamma_{\FR}}(\{\cS'\}) 
& \text{if }f(\cS')\neq\cS'\in\Gamma_{\FR}\cap\sH^1,\\
0 & \text{if }\cS'\in\Gamma_{\FR}\cap\bP^1=\Fix(f);
\end{cases}\tag{\ref{eq:computation}$'$}
\label{eq:computationtree}
\end{multline}
the final case holds
since $\supp(\nu_{f,\Gamma_{\FR}})\subset\sH^1$.

{\bfseries (A)} Let $\cS'\in\Gamma_{\FR}\cap\sH^1$ be 
fixed by $f$. Fix 
$\cS_0\in\sH^1\setminus(r_{\sP^1,\Gamma_{\FR}})^{-1}(\cS')$, and 
apply \eqref{eq:alwaysgeneral} to $\Gamma_{\FR}$.
{\bfseries (Subcase 1)} If $\cS'$ is of type II, then 
by the inclusion $\Fix(f)\subset\Gamma_{\FR}$ and Theorems \ref{th:1st3rd} and \ref{th:3rd}, 
for every $\overrightarrow{v}\in(T_{\cS'}\sP^1)\setminus(T_{\cS'}\Gamma_{\FR})$,
$s_{\overrightarrow{v}}(f)=0$. 
By \eqref{eq:preimages}, for every $\overrightarrow{v}\in T_{\cS'}\sP^1$,
\begin{gather*}
 (f^*\delta_{\cS_0})(U_{\overrightarrow{v}})=s_{\overrightarrow{v}}(f)+
 \begin{cases}
 m_{\overrightarrow{v}}(f) &
 \text{if } f_*(\overrightarrow{v})=\overrightarrow{\cS'\cS_0},\\
 0 &
 \text{otherwise}.
 \end{cases}
\end{gather*}
Hence, for every
$\overrightarrow{v}\in(T_{\cS'}\sP^1)\setminus(T_{\cS'}\Gamma_{\FR})$,
$(f^*\delta_{\cS_0})(U_{\overrightarrow{v}})>0$ if and only if
$f_*(\overrightarrow{v})=\overrightarrow{\cS'\cS_0}$, 
and then $(f^*\delta_{\cS_0})(U_{\overrightarrow{v}})=m_{\overrightarrow{v}}(f)$,
so that 
the second term in the right hand side of \eqref{eq:alwaysgeneral} 
applied to $\Gamma_{\FR}$ is 
\begin{gather*}
 \sum_{\overrightarrow{v}\in(T_{\cS'}\sP^1)\setminus(T_{\cS'}\Gamma_{\FR})
}(f^*\delta_{\cS_0})(U_{\overrightarrow{v}})
= \sum_{\overrightarrow{v}\in(T_{\cS'}\sP^1)\setminus(T_{\cS'}\Gamma_{\FR}):
 f^*(\overrightarrow{v})=\overrightarrow{\cS'\cS_0}}m_{\overrightarrow{v}}(f).
\end{gather*}
Hence also by \eqref{eq:fixedLaplaciangeneral} and
\eqref{eq:directionaldegree}, the equality
\eqref{eq:computationtree} holds in this case.
{\bfseries (Subcase 2)} If $\cS'$ is of type III,
then $\#T_{\cS'}\Gamma_{\FR}>1$, which with
$\#T_{\cS'}\sP^1=2$ yields $T_{\cS'}\sP^1=T_{\cS'}\Gamma_{\FR}$. 
In particular,
the second term in the right hand side of \eqref{eq:alwaysgeneral} 
applied to $\Gamma_{\FR}$ vanishes.
Writing 
$T_{\cS'}\Gamma_{\FR}=\{\overrightarrow{\cS'\cS_0},\overrightarrow{w}\}$,
by Theorem \ref{th:indifferent}, we have 
$f_*(\overrightarrow{\cS'\cS_0})=\overrightarrow{\cS'\cS_0}$,
$f_*(\overrightarrow{w})=\overrightarrow{w}$, and
$m_{\overrightarrow{\cS'\cS_0}}(f)=m_{\overrightarrow{w}}(f)=\deg_{\cS'}(f)=1$.
Then also by \eqref{eq:fixedcase},
the first term in the right hand side of \eqref{eq:alwaysgeneral} 
applied to $\Gamma_{\FR}$ can be computed as
\begin{multline*}
 \sum_{\overrightarrow{v}\in T_{\cS'}\Gamma_{\FR}}\rd_{\overrightarrow{v}}
 (\cS\mapsto\rho(\cS,f(\cS)\wedge_{\cS_0}\cS))
 =\bigl(m_{\overrightarrow{\cS'\cS_0}}(f)-1\bigr)+0\\
 =0=\deg_{\cS'}(f)-1+
 \#\{\overrightarrow{v}\in T_{\cS'}\Gamma_{\FR}:f_*(\overrightarrow{v})\neq\overrightarrow{v}\}.
\end{multline*}
Hence \eqref{eq:computationtree} holds in this case.

{\bfseries (B)} Let $\cS'\in\Gamma_{\FR}\cap\sH^1$ be not fixed by $f$. 
Then $\#T_{\cS'}\Gamma_{\FR}>1$ by the tree intersection theorem \eqref{eq:tree}.
We first claim that $r_{\sP^1,\Gamma_{\FR}}(f(\cS'))\neq\cS'$; for, 
suppose to the contrary that
$r_{\sP^1,\Gamma_{\FR}}(f(\cS'))=\cS'$. Then 
we have $\overrightarrow{\cS'f(\cS')}\not\in T_{\cS'}\Gamma_{\FR}$,
so that $\cS'$ must be of type II (otherwise, $\cS'$ is of type III
and then $T_{\cS'}\sP^1=T_{\cS'}\Gamma_{\FR}$, 
which contradicts $\overrightarrow{\cS'f(\cS')}\not\in T_{\cS'}\Gamma_{\FR}$).
Then by Theorem \ref{th:2nd},
we must have $\emptyset\neq U_{\overrightarrow{\cS'f(\cS')}}\cap\Fix(f)
\subset U_{\overrightarrow{\cS'f(\cS')}}\cap\Gamma_{\FR}$,
which is a contradiction. Hence this claim holds.
Fix now 
$\cS_0\in(\sH^1\setminus(r_{\sP^1,\Gamma_{\FR}})^{-1}(\cS'))\cap U_{\overrightarrow{f(\cS')\cS'}}$.
We also claim that
for every $\overrightarrow{v}\in(T_{\cS'}\sP^1)\setminus(T_{\cS'}\Gamma_{\FR})$,
$(f^*\delta_{\cS_0})(U_{\overrightarrow{v}})=0$; for,
if $\cS'$ is of type III, then $T_{\cS'}\sP^1=T_{\cS'}\Gamma_{\FR}$
and there is nothing to do. Suppose that $\cS'$ is type II and,
to the contrary, that $(f^*\delta_{\cS_0})(U_{\overrightarrow{v}})>0$
for some $\overrightarrow{v}\in(T_{\cS'}\sP^1)\setminus(T_{\cS'}\Gamma_{\FR})$.
Then by the inclusion $\Fix(f)\subset\Gamma_{\FR}$, 
Theorem \ref{th:2nd}, \eqref{eq:preimages},
and the choice of $\cS_0$,
we have ($s_{\overrightarrow{v}}(f)=0$ and in turn)
$f_*(\overrightarrow{v})=\overrightarrow{f(\cS')\cS_0}=\overrightarrow{f(\cS')\cS'}$,
so $\emptyset\neq U_{\overrightarrow{v}}\cap\Fix(f)\subset
U_{\overrightarrow{v}}\cap\Gamma_{\FR}$ by Theorem \ref{th:2nd}. 
This contradicts $U_{\overrightarrow{v}}\cap\Gamma_{\FR}=\emptyset$.
Hence this claim also holds.

Now by the above two claims and
\eqref{eq:alwaysgeneral} and \eqref{eq:nonfixedgeneral}
applied to $\Gamma_{\FR}$, we have \eqref{eq:computationtree} in this case.

\subsection{Proof of \eqref{eq:computation} on $\sP^1\setminus\Gamma_{\FR}$}
In \eqref{eq:computation},
the left hand side trivially vanishes on $\sP^1\setminus\Gamma_{\FR}$, and
so does the right hand side on $\bP^1$, trivially. 

Let us see that the right hand side also vanishes on $\sH^1\setminus\Gamma_{\FR}$.
The following argument is similar to that in 
\cite[Proof of Proposition 6.1]{Rumely14}.
Fix $\cS'\in\sH^1\setminus\Gamma_{\FR}$, and set
$\overrightarrow{v}:=\overrightarrow{\cS'(r_{\sP^1,\Gamma_{\FR}}(\cS'))}
\in T_{\cS'}\sP^1$, so that $\bigl\{\overrightarrow{w}\in T_{\cS'}\sP^1:
U_{\overrightarrow{w}}\cap\Gamma_{\FR}\neq\emptyset\bigr\}=\{\overrightarrow{v}\}$.
If $f(\cS')\neq\cS'$, then 
$\#\bigl\{\overrightarrow{w}\in T_{\cS'}\sP^1:
U_{\overrightarrow{w}}\cap\Gamma_{\FR}\neq\emptyset\bigr\}-2
=1-2=-1<0$,
so
the right hand side of \eqref{eq:computation} vanishes. 

Suppose now that $f(\cS')=\cS'$.
We claim both $\deg_{\cS'}(f)=1$ and $f_*(\overrightarrow{v})=\overrightarrow{v}$, which will imply that
the right hand side of \eqref{eq:computation} vanishes.
Recall the description
of $f_*=(f_*)_{\cS'}$ in Definition \ref{eq:directionaldegree}. 
(a) if $\cS'$ is of type II, 
then $\deg_{\cS'}(f)=1$ by the definition of $\Gamma_{\FR}$.
If in addition $\widetilde{ h\circ f\circ h^{-1}}\neq\Id_{\bP^1(k)}$
for some $ h\in\PGL(2,K)$ 
sending $\cS'$ to $\cS_{\can}$, then 
by the inclusion $\Fix(f)\subset\Gamma_{\FR}$ and Theorem \ref{th:1st3rd},
for every $\overrightarrow{w}\in T_{\cS'}\sP^1\setminus\{\overrightarrow{v}\}$, 
$f_*(\overrightarrow{w})\neq\overrightarrow{w}$ 
(and $s_{\overrightarrow{w}}(f)=0$). 
Hence by \eqref{eq:fixedreduction}, we have
$f_*(\overrightarrow{v})=\overrightarrow{v}$.
Alternatively, if in addition $\widetilde{ h\circ f\circ h^{-1}}=\Id_{\bP^1(k)}$
for some $ h\in\PGL(2,K)$ sending $\cS'$ to $\cS_{\can}$, then 
$f_*(\overrightarrow{w})=\overrightarrow{w}$
for {\itshape any} $\overrightarrow{w}\in T_{\cS'}\sP^1$,
so $f_*(\overrightarrow{v})=\overrightarrow{v}$. 
(b) If $\cS'$ is of type III or IV, then
by Theorem \ref{th:indifferent}, we still have $\deg_{\cS'}(f)=1$ and
$f_*(\overrightarrow{v})=\overrightarrow{v}$. 

\subsection{Proof of \eqref{eq:extendedtree}} 
Let $\Gamma$ be a non-trivial finite subtree in $\sP^1$,
and suppose that there are a point $\cS_{\Gamma}\in\Gamma_{\FR}$
and a direction $\overrightarrow{v_{\Gamma}}\in(T_{\cS_{\Gamma}}\sP^1)\setminus(T_{\cS_{\Gamma}}\Gamma_{\FR})$ such that 
$\Gamma\subset U_{\overrightarrow{v_{\Gamma}}}\cup\{\cS_{\Gamma}\}$.
Then $U_{\overrightarrow{v_\Gamma}}\cap\Fix(f)=\emptyset$, 
and the point 
\begin{gather*}
 \cS'_{\Gamma}:=r_{\sP^1,\Gamma}(\cS_{\Gamma})
\end{gather*}
is neither type I nor IV.

Hence for every type I or IV point $\cS'\in\Gamma$,
we note that $f(\cS')=\cS'$ only if $\cS'$ is type IV,
and that $\cS'\neq r_{\sP^1,\Gamma}(f(\cS'))$ if $f(\cS')\neq\cS'$,
so only the cases (A3) and (B1) in the right hand side in 
\eqref{eq:extendedtree} could occur. On the other hand,
by \eqref{eq:LaplacianI}, we also have 
\begin{gather*}
\left(\nu_{f,\Gamma}-(r_{\sP^1,\Gamma})_*\delta_{\cS_{\Gamma}}\right)(\{\cS'\})
=\nu_{f,\Gamma}(\{\cS'\}) 
=\bigl(\Delta_{\Gamma}(\Crucial_f|\Gamma)+\nu_{\Gamma}\bigr)(\{\cS'\})\\
=\begin{cases}
     0 &\text{if }f(\cS')=\cS'(\Leftrightarrow\text{(A3)}),\\
     -\frac{1}{d-1}
=\frac{((-2)\cdot\nu_{\Gamma}+(r_{\sP^1,\Gamma})_*\delta_{\cS_{\Gamma}})(\{\cS'\})}{d-1} &\text{if }f(\cS')\neq\cS'(\Leftrightarrow\text{(B1)}), 
    \end{cases}
\end{gather*}
which yields \eqref{eq:extendedtree} in this case.

Let us show \eqref{eq:extendedtree} for every type II or III point $\cS'\in\Gamma$.
Recall that type II points are dense in $(\sH^1,\rho)$ and
that the function $\Crucial_f$ on $(\sP^1,\tilde{\rho})$ 
is locally affine continuous at every type III point.

Te following computation \eqref{eq:truncation}
and \eqref{eq:enlarging} will be used in the proof of \eqref{eq:computation}
in the cases (A3, B1, B2); 
let $\cS'$ be a type III {\em end} point of $\Gamma$.
If $\cS'=\cS'_{\Gamma}$, then using \eqref{eq:outer},
for any type II point $\cS''\in\Gamma$ close enough to $\cS'$,
we have
\begin{align}
\label{eq:truncation} \bigl(\nu_{f,\Gamma}-(r_{\sP^1,\Gamma})_*\delta_{\cS_{\Gamma}}\bigr)(\{\cS'\})
=&\bigl(\nu_{f,[\cS'',\cS']}
-(r_{\sP^1,\Gamma})_*\delta_{\cS_{\Gamma}}\bigr)(\{\cS'\})\\
\notag=&-\bigl(\nu_{f,[\cS'',\cS']}-(r_{\sP^1,\Gamma})_*\delta_{\cS_{\Gamma}}\bigr)(\{\cS''\}).
\end{align}
If $\cS'\neq \cS'_{\Gamma}$, then for any type II point 
$\cS''\in r_{\sP^1,\Gamma}^{-1}(\{\cS'\})\cap\sH^1$,
we have $[\cS',\cS'']\subset U_{v_{\Gamma}}\cup\{\cS_{\Gamma}\}$ and 
$r_{\sP^1,[\cS',\cS'']}(\cS_{\Gamma})=\cS'$, and in turn using \eqref{eq:outer}, 
\begin{multline}
 \bigl(\nu_{f,\Gamma}-(r_{\sP^1,\Gamma})_*\delta_{\cS_{\Gamma}}\bigr)(\{\cS'\})
=\bigl(\nu_{f,[\cS',\cS'_{\Gamma}]}
-(r_{\sP^1,\Gamma})_*\delta_{\cS_{\Gamma}}\bigr)(\{\cS'\})\\
=\bigl(\nu_{f,[\cS'',\cS'_{\Gamma}]}
-(r_{\sP^1,\Gamma})_*\delta_{\cS_{\Gamma}}\bigr)(\{\cS'\})
-\bigl(\nu_{f,[\cS'',\cS']}-(r_{\sP^1,[\cS'',\cS']})_*\delta_{\cS'}\bigr)(\{\cS'\})\\
=\bigl(\nu_{f,[\cS'',\cS'_{\Gamma}]}
-(r_{\sP^1,\Gamma})_*\delta_{\cS_{\Gamma}}\bigr)(\{\cS'\})
+\bigl(\nu_{f,[\cS'',\cS']}
-(r_{\sP^1,[\cS'',\cS']})_*\delta_{\cS_{\Gamma}}\bigr)(\{\cS''\}).\label{eq:enlarging}
\end{multline}

{\bfseries (The cases A1, A2)} 
Let $\cS'\in\Gamma$ be a type II fixed point of $f$. 
Fix $\cS_0\in\sH^1\setminus(r_{\sP^1,\Gamma})^{-1}(\cS')$, 
and recall \eqref{eq:alwaysgeneral}.
Using Theorem \ref{th:semiglobal}, the inclusion $\Fix(f)\subset\Gamma_{\FR}$,
and Theorems \ref{th:1st3rd} and \ref{th:3rd}, we can compute
the second term in the right hand side of \eqref{eq:alwaysgeneral} 
applied to $\Gamma$ as
\begin{align*}
 &\sum_{\overrightarrow{v}\in(T_{\cS'}\sP^1)\setminus(T_{\cS'}\Gamma)}(f^*\delta_{\cS_0})(U_{\overrightarrow{v}})\\
=&\sum_{\overrightarrow{v}\in(T_{\cS'}\sP^1)\setminus(T_{\cS'}\Gamma):f_*(\overrightarrow{v})=\overrightarrow{\cS'\cS_0}}m_{\overrightarrow{v}}(f)
 +\sum_{\overrightarrow{v}\in(T_{\cS'}\sP^1)\setminus(T_{\cS'}\Gamma)}s_{\overrightarrow{v}}(f)\\
 =&\sum_{\overrightarrow{v}\in(T_{\cS'}\sP^1)\setminus(T_{\cS'}\Gamma):f_*(\overrightarrow{v})=\overrightarrow{\cS'\cS_0}}m_{\overrightarrow{v}}(f)\\
 &+
 \begin{cases}
\displaystyle\sum_{\overrightarrow{v}\in(T_{\cS'}\sP^1)\setminus(T_{\cS'}\Gamma)}0
=0 & \text{if }\cS'\in\Gamma\setminus\{\cS'_{\Gamma}\}\\
\displaystyle\sum_{\overrightarrow{v}\in T_{\cS'}\sP^1}s_{\overrightarrow{v}}(f)
=d-\deg_{\cS'}(f) & \text{if }\cS'=\cS'_{\Gamma}
 \end{cases}\\
=&\Biggl(\sum_{\overrightarrow{v}\in(T_{\cS'}\sP^1)\setminus(T_{\cS'}\Gamma):f_*(\overrightarrow{v})=\overrightarrow{\cS'\cS_0}}m_{\overrightarrow{v}}(f)\Biggr)
+(d-\deg_{\cS'}(f))\cdot\bigl((r_{\sP^1,\Gamma})_*\delta_{\cS_{\Gamma}}\bigr)(\{\cS'\}),
 \end{align*}
and by the definition of $\Gamma_{\FR}$, we have
$\deg_{\cS'}(f)=1$ if $\cS'\in\Gamma\setminus\{\cS'_{\Gamma}\}$. 
Hence also by \eqref{eq:fixedLaplaciangeneral}
and \eqref{eq:directionaldegree} applied to $h=f,\cS=\cS'$, and $\overrightarrow{w}=\overrightarrow{\cS'\cS_0}$, we have
 \begin{align*}
 &(d-1)\cdot\nu_{f,\Gamma}(\{\cS'\})\\
 =&\deg_{\cS'}(f)+\#\{\overrightarrow{v}\in T_{\cS'}\Gamma:
 f_*(\overrightarrow{v})\neq\overrightarrow{v}\}-1\\
&+(d-\deg_{\cS'}(f))\cdot\bigl((r_{\sP^1,\Gamma})_*\delta_{\cS_{\Gamma}}\bigr)(\{\cS'\})\\
 =&\#\{\overrightarrow{v}\in T_{\cS'}\Gamma:
 f_*(\overrightarrow{v})\neq\overrightarrow{v}\}
  +
  \begin{cases}
   1-1=0 &\text{ if }\cS'\in\Gamma\setminus\{\cS'_{\Gamma}\}\\
   d-1 &\text{ if }\cS'=\cS'_{\Gamma}
  \end{cases}\\ 
 =&\#\{\overrightarrow{v}\in T_{\cS'}\Gamma:
 f_*(\overrightarrow{v})\neq\overrightarrow{v}\}
 +(d-1)\cdot\bigl((r_{\sP^1,\Gamma})_*\delta_{\cS_{\Gamma}}\bigr)(\{\cS'\}).
 \end{align*}
Recall the description
of $f_*=(f_*)_{\cS'}$ in Definition \ref{eq:directionaldegree}. 
If in addition $\widetilde{ h\circ f\circ h^{-1}}=\Id_{\bP^1(k)}$
 for some $ h\in\PGL(2,K)$ sending $\cS'$ to $\cS_{\can}$,
 then 
 $\#\{\overrightarrow{v}\in T_{\cS'}\Gamma:
 f_*(\overrightarrow{v})\neq\overrightarrow{v}\}=\#\emptyset=0$.
 Alternatively, if 
 $\widetilde{ h\circ f\circ h^{-1}}\neq\Id_{\bP^1(k)}$
 for some $ h\in\PGL(2,K)$ sending $\cS'$ to $\cS_{\can}$,
 then by \eqref{eq:first},
 we have
\begin{align*}
 \#\{\overrightarrow{v}\in T_{\cS'}\Gamma:
 f_*(\overrightarrow{v})\neq\overrightarrow{v}\}
 =&
\begin{cases}
 \#\bigl((T_{\cS'}\Gamma)\setminus\{\overrightarrow{\cS'\cS_{\Gamma}}\}\bigr) &\text{if }\cS'\in\Gamma\setminus\{\cS'_{\Gamma}\}\\
 \#T_{\cS'}\Gamma &\text{if }\cS'=\cS'_{\Gamma}
 \end{cases}\\
 =&\bigl((-2)\cdot\nu_{\Gamma}+(r_{\sP^1,\Gamma})_*\delta_{\cS_{\Gamma}}\bigr)(\{\cS'\})+1.
\end{align*}
Hence \eqref{eq:extendedtree} holds in this case.

{\bfseries (The case A3)}
Next, let $\cS'\in\Gamma$ be a type III fixed point of $f$.
There are two cases. If $\#T_{\cS'}\Gamma>1$, then $\cS'\neq \cS'_{\Gamma}$,
$T_{\cS'}\sP^1=T_{\cS'}\Gamma$, and $\#T_{\cS'}\Gamma=2$, so that
$(d-1)\cdot((-2)\cdot\nu_{\Gamma}+(r_{\sP^1,\Gamma})_*\delta_{\cS_{\Gamma}})(\{\cS'\})=0$.
On the other hand, by Theorem \ref{th:indifferent},
for every $\overrightarrow{v}\in T_{\cS'}\sP^1$,
we have $f_*(\overrightarrow{v})=\overrightarrow{v}$ 
and $m_{\overrightarrow{v}}(f)=\deg_{\cS'}(f)=1$, so
fixing $\cS_0\in\sH^1\setminus(r_{\sP^1,\Gamma})^{-1}(\cS')$,
also by \eqref{eq:alwaysgeneral}, \eqref{eq:fixedLaplaciangeneral}, 
and $T_{\cS'}\sP^1=T_{\cS'}\Gamma$, we have
\begin{gather*}
  (d-1)\cdot\nu_{f,\Gamma}(\{\cS'\})
 =\Bigl(m_{\overrightarrow{\cS'\cS_0}}(f)
 +\#\{\overrightarrow{v}\in T_{\cS'}\Gamma:
 f_*(\overrightarrow{v})\neq\overrightarrow{v}\}-1\Bigr)+0=0. 
\end{gather*}
Hence \eqref{eq:extendedtree} holds in this case. 

Suppose now that $\#T_{\cS'}\Gamma=1$.
If $\cS'=\cS'_{\Gamma}$, then by Theorem \ref{th:indifferent}, 
for every type II point $\cS''\in\Gamma$ close enough to $\cS'$, 
we have $\cS''=f(\cS'')$, so that
by \eqref{eq:truncation} and the already seen \eqref{eq:extendedtree}
applied to $[\cS'',\cS']$ and $\cS''$, we have
\begin{multline*}
\bigl(\nu_{f,\Gamma}-(r_{\sP^1,\Gamma})_*\delta_{\cS_{\Gamma}}\bigr)(\{\cS'\})
=-\bigl(\nu_{f,[\cS'',\cS']}-(r_{\sP^1,\Gamma})_*\delta_{\cS_{\Gamma}}\bigr)(\{\cS''\})\\
=-\frac{1}{d-1}\begin{cases}
   0 & \text{if }\cS''\text{ is in the case (A1)}\\
   \bigl((v_{[\cS'',\cS']}(\cS'')-2)+0\bigr)+1=0
& \text{if }\cS''\text{ is in the case (A2)}
  \end{cases}=0.
\end{multline*}
Alternatively, if $\cS'\neq \cS'_{\Gamma}$,
then for every type II point $\cS''\in r_{\sP^1,\Gamma}^{-1}(\cS')$
close enough to $\cS'$, we have not only $\cS''=f(\cS'')$
by Theorem \ref{th:indifferent} but also $\#T_{\cS'}[\cS'',\cS'_{\Gamma}]=2$,
so that by \eqref{eq:enlarging}, 
the already seen \eqref{eq:extendedtree} applied to $[\cS'',\cS'_{\Gamma}]$ and $\cS'$, and 
the already seen \eqref{eq:extendedtree} applied to $[\cS'',\cS']$
and $\cS''$ (no matter whether $\cS''$ is in (A1) or (A2)), we similarly
have
\begin{align*}
&\bigl(\nu_{f,\Gamma}-(r_{\sP^1,\Gamma})_*\delta_{\cS_{\Gamma}}\bigr)(\{\cS'\})\\
=&\bigl(\nu_{f,[\cS'',\cS'_{\Gamma}]}
-(r_{\sP^1,\Gamma})_*\delta_{\cS_{\Gamma}}\bigr)(\{\cS'\})
+\bigl(\nu_{f,[\cS'',\cS']}-(r_{\sP^1,[\cS'',\cS']})_*\delta_{\cS_{\Gamma}}\bigr)(\{\cS''\})\\
=&0+0=0.
\end{align*}
Hence \eqref{eq:extendedtree} holds in this case. 

{\bfseries (The cases B1, B2)} 
Let $\cS'\in\Gamma$ be a type II or III point not fixed by $f$.
We prepare the following.

\begin{lemma}\label{th:typeIIIend}
$(i)$ If $\cS'$ is type {\em II} and 
$(f(\cS')\neq)\cS'=r_{\sP^1,\Gamma}(f(\cS'))$,
then $\cS'_{\Gamma}=\cS'$. 

$(ii)$ If $T_{\cS'}\Gamma=1$ and
$(f(\cS')\neq)\cS'=\cS'_{\Gamma}$, then
$\cS'=r_{\sP^1,\Gamma}(f(\cS'))$.
\end{lemma}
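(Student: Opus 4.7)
The plan is to handle both parts by invoking the second identification lemma (Theorem~\ref{th:2nd}) at $\cS'$ and exploiting the separation fact
\begin{gather*}
\Gamma_{\FR}\cap U_{\overrightarrow{v_\Gamma}}=\emptyset,
\end{gather*}
which is immediate from $\overrightarrow{v_\Gamma}\notin T_{\cS_\Gamma}\Gamma_{\FR}$ and the subtree property of $\Gamma_{\FR}$. For (i), since $\cS'$ is type II and not fixed, Theorem~\ref{th:2nd}(a) applied to the direction $\overrightarrow{\cS'f(\cS')}$ produces a fixed point $a\in U_{\overrightarrow{\cS'f(\cS')}}\cap\Fix(f)\subset\Gamma_{\FR}$, so $a\in\sP^1\setminus U_{\overrightarrow{v_\Gamma}}$. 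If $\cS'=\cS_\Gamma$ the conclusion is trivial; otherwise $\cS'\in U_{\overrightarrow{v_\Gamma}}\setminus\{\cS_\Gamma\}$, and the path $[\cS',a]$ exits $U_{\overrightarrow{v_\Gamma}}$ only through $\cS_\Gamma$, yielding $\overrightarrow{\cS'\cS_\Gamma}=\overrightarrow{\cS'f(\cS')}$. Since the hypothesis $r_{\sP^1,\Gamma}(f(\cS'))=\cS'$ is equivalent to $\overrightarrow{\cS'f(\cS')}\notin T_{\cS'}\Gamma$, this gives $\overrightarrow{\cS'\cS_\Gamma}\notin T_{\cS'}\Gamma$, i.e., $r_{\sP^1,\Gamma}(\cS_\Gamma)=\cS'$.

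For (ii) I would split on whether $\cS_\Gamma\in\Gamma$. If $\cS_\Gamma\in\Gamma$, then $\cS'=\cS_\Gamma$ and $T_{\cS_\Gamma}\Gamma=\{\overrightarrow{v_\Gamma}\}$. A preliminary analysis forces $\cS_\Gamma$ to be of type II: type I would put $\cS_\Gamma\in\Fix(f)$, contradicting $f(\cS')\neq\cS'$, while the end-point characterisation of $\Gamma_{\FR}$ (endpoints are type I or II by construction) combined with $\overrightarrow{v_\Gamma}\notin T_{\cS_\Gamma}\Gamma_{\FR}$ and the tangent counts $\#T_{\cS_\Gamma}\sP^1\in\{1,2\}$ for types IV and III excludes those. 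Then Theorem~\ref{th:2nd}(a) at $\cS_\Gamma$ shows that $f(\cS_\Gamma)\in U_{\overrightarrow{v_\Gamma}}$ would place a fixed point in $U_{\overrightarrow{v_\Gamma}}\cap\Gamma_{\FR}=\emptyset$, impossible; hence $f(\cS_\Gamma)\notin U_{\overrightarrow{v_\Gamma}}$ and $r_{\sP^1,\Gamma}(f(\cS_\Gamma))=\cS_\Gamma$.

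If $\cS_\Gamma\notin\Gamma$, then $\cS'\in U_{\overrightarrow{v_\Gamma}}\setminus\{\cS_\Gamma\}$; writing $\{\overrightarrow{w}\}=T_{\cS'}\Gamma$, the hypothesis forces $\overrightarrow{w}\neq\overrightarrow{\cS'\cS_\Gamma}$, and a direct check gives $U_{\overrightarrow{w}}\subset U_{\overrightarrow{v_\Gamma}}$, since every $p\notin U_{\overrightarrow{v_\Gamma}}$ has $\overrightarrow{\cS'p}=\overrightarrow{\cS'\cS_\Gamma}\neq\overrightarrow{w}$. When $\cS'$ is of type II, Theorem~\ref{th:2nd}(a) again produces a contradiction: assuming $f(\cS')\in U_{\overrightarrow{w}}$ would create a fixed point in $U_{\overrightarrow{w}}\subset U_{\overrightarrow{v_\Gamma}}$. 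When $\cS'$ is of type III, I would reduce to the type II case by approximating $\cS'$ via a type II point $\cS''\in(\cS',\cS_\Gamma)$ arbitrarily close to $\cS'$ and transferring the conclusion through Theorem~\ref{th:local}.

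The main obstacle is precisely the type III case of (ii): Theorem~\ref{th:2nd} is formulated only for type II base points, so one must show that a nearby type II approximation $\cS''$ retains enough directional information to detect the assumed identity $\overrightarrow{\cS'f(\cS')}=\overrightarrow{w}$. The delicate point is that inserting $\cS''$ into the picture modifies the tangent configuration at $\cS'$, so one has to invoke the local expansion/homeomorphism description in Theorem~\ref{th:local} on sufficiently short initial segments from $\cS'$ in the directions $\overrightarrow{w}$ and $\overrightarrow{\cS'\cS_\Gamma}$ to conclude that $\overrightarrow{\cS''f(\cS'')}=\overrightarrow{\cS''\cS'}$, so that the type II case at $\cS''$ then yields the desired contradiction.
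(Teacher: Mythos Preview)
Your argument for part~(i) is essentially the paper's, just recast as a direct proof rather than a contradiction: both hinge on Theorem~\ref{th:2nd}(a) producing a fixed point in $U_{\overrightarrow{\cS'f(\cS')}}\subset\Gamma_{\FR}$, which forces $\overrightarrow{\cS'f(\cS')}=\overrightarrow{\cS'\cS_\Gamma}$ and hence $\overrightarrow{\cS'\cS_\Gamma}\notin T_{\cS'}\Gamma$.

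For part~(ii), your approach is correct but takes a more circuitous path than the paper's. You split on whether $\cS_\Gamma\in\Gamma$, further split the second case by the type of $\cS'$, and---for the type~III end point---approximate by a type~II point $\cS''\in(\cS',\cS_\Gamma)$, i.e.\ on the $\cS_\Gamma$ side. This works (your sketched verification that $f(\cS'')\in U_{\overrightarrow{w}}$ for $\cS''$ close enough, hence $\overrightarrow{\cS''f(\cS'')}=\overrightarrow{\cS''\cS'}$, then $U_{\overrightarrow{\cS''\cS'}}\subset U_{\overrightarrow{v_\Gamma}}$, goes through), but it is exactly the ``delicate point'' you flag. The paper avoids all of this by a single uniform choice: assuming $\cS'\neq r_{\sP^1,\Gamma}(f(\cS'))$, one has $U_{\overrightarrow{\cS'f(\cS')}}\subset U_{\overrightarrow{v_\Gamma}}$ directly (no case split on $\cS_\Gamma\in\Gamma$ needed), and then one picks the approximating type~II point $\cS''\in U_{\overrightarrow{\cS'f(\cS')}}$ close to $\cS'$---on the $f(\cS')$ side rather than the $\cS_\Gamma$ side. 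Since both $\cS''$ and $f(\cS'')$ lie in the connected open set $U_{\overrightarrow{\cS'f(\cS')}}$, one immediately gets $U_{\overrightarrow{\cS''f(\cS'')}}\subset U_{\overrightarrow{\cS'f(\cS')}}\subset U_{\overrightarrow{v_\Gamma}}$, and Theorem~\ref{th:2nd}(a) at $\cS''$ gives the contradiction. This handles type~II and type~III simultaneously with no extra work, and bypasses the need to analyze the type of $\cS_\Gamma$ or the tangent configuration at $\cS'$.
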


\begin{proof}
(i) Suppose to the contrary that $\cS'_{\Gamma}\neq\cS'$.
Then $\Gamma_{\FR}\subset U_{\overrightarrow{\cS'\cS_{\Gamma}}}$.
On the other hand,
by Theorem \ref{th:2nd} and the inclusion $\Fix(f)\subset\Gamma_{\FR}$,
we must also have $\emptyset\neq U_{\overrightarrow{\cS'f(\cS')}}\cap\Fix(f)
\subset U_{\overrightarrow{\cS'f(\cS')}}\cap\Gamma_{\FR}$. Hence 
$\overrightarrow{\cS'f(\cS')}=\overrightarrow{\cS'\cS_{\Gamma}}\in T_{\cS'}\Gamma$,
which contradicts $r_{\sP^1,\Gamma}(f(\cS'))=\cS'$.

 (ii) Suppose to the contrary that
 $\cS'\neq r_{\sP^1,\Gamma}(f(\cS'))$.
 Then $U_{\overrightarrow{\cS'f(\cS')}}\subset U_{\overrightarrow{v_{\Gamma}}}$. 
 For every type II point $\cS''\in U_{\overrightarrow{\cS'f(\cS')}}$ 
 close enough to $\cS'$, we also have $f(\cS'')\neq\cS''$ and 
 $U_{\overrightarrow{\cS''f(\cS'')}}\subset U_{\overrightarrow{\cS'f(\cS')}}$, 
 and then by Theorem \ref{th:2nd} and the inclusion
 $\Fix(f)\subset\Gamma_{\FR}$, we must have
 $\emptyset\neq U_{\overrightarrow{\cS''f(\cS'')}}\cap\Fix(f)\subset U_{\overrightarrow{v_{\Gamma}}}\cap\Gamma_{\FR}$.
 This contradicts $U_{\overrightarrow{v_{\Gamma}}}\cap\Gamma_{\FR}=\emptyset$. 
\end{proof}

{\bfseries (Subcase a)} 
Suppose that $\cS'$ is {\em not} a type III {\em end} point of $\Gamma$.
Fix $\cS_0\in(\sH^1\setminus(r_{\sP^1,\Gamma})^{-1}(\cS'))\cap U_{\overrightarrow{f(\cS')\cS'}}$, and apply \eqref{eq:alwaysgeneral} to $\Gamma$. 
If $r_{\sP^1,\Gamma}(f(\cS'))=\cS'$,
then $\cS'$ is type II under the above assumption, so that 
by Lemma \ref{th:typeIIIend}(i), we have $\cS'_{\Gamma}=\cS'$.
Hence by \eqref{eq:nonfixedgeneral}, 
the first term in the right hand side of \eqref{eq:alwaysgeneral} 
applied to $\Gamma$ is computed as
\begin{align*}  
 \sum_{\overrightarrow{v}\in T_{\cS'}\Gamma}\rd_{\overrightarrow{v}}
 \bigl(\cS\mapsto\rho(\cS,f(\cS)\wedge_{\cS_0}\cS)\bigr)
 =&v_{\Gamma}(\cS')-1 =(v_{\Gamma}(\cS')-2)+1\\
 =&(-2)\cdot\nu_{\Gamma}(\{\cS'\})
 +\bigl((r_{\sP^1,\Gamma})_*\delta_{\cS_{\Gamma}}\bigr)(\{\cS'\}).
\end{align*}
Alternatively, if $r_{\sP^1,\Gamma}(f(\cS'))\neq\cS'$, then 
by \eqref{eq:nonfixedgeneral}, we similarly have
\begin{gather*}
 \sum_{\overrightarrow{v}\in T_{\cS'}\Gamma}\rd_{\overrightarrow{v}}
 \bigl(\cS\mapsto\rho(\cS,f(\cS)\wedge_{\cS_0}\cS)\bigr)
=v_{\Gamma}(\cS')-2 =(-2)\cdot\nu_{\Gamma}(\{\cS'\}).
\end{gather*}
On the other hand, 
(no matter whether $r_{\sP^1,\Gamma}(f(\cS'))=\cS'$ or not,)
the second term in the right hand side of \eqref{eq:alwaysgeneral} 
applied to $\Gamma$ is computed as follows;
if $\cS'$ is type II, then 
by Theorems \ref{th:semiglobal} and \ref{th:2nd},
the inclusion $\Fix(f)\subset\Gamma_{\FR}$, and \eqref{eq:directionaldegree}, 
we have
\begin{align*}
&\sum_{\overrightarrow{v}\in(T_{\cS'}\sP^1)\setminus(T_{\cS'}\Gamma)}(f^*\delta_{\cS_0})(U_{\overrightarrow{v}})\\
=&
\sum_{\overrightarrow{v}\in(T_{\cS'}\sP^1)\setminus(T_{\cS'}\Gamma)}s_{\overrightarrow{v}}(f)
+\sum_{\overrightarrow{v}\in(T_{\cS'}\sP^1)\setminus(T_{\cS'}\Gamma):\atop
f_*(\overrightarrow{v})=\overrightarrow{f(\cS')\cS_0}(=\overrightarrow{f(\cS')\cS'}\text{ by the choice of }\cS_0)}m_{\overrightarrow{v}}(f)\\
=&
\begin{cases}
\displaystyle
\sum_{\overrightarrow{v}\in(T_{\cS'}\sP^1)\setminus(T_{\cS'}\Gamma)}0=0 & \text{if }\cS'\in\Gamma\setminus\{\cS'_{\Gamma}\}\\
\displaystyle\sum_{\overrightarrow{v}\in T_{\cS'}\sP^1}s_{\overrightarrow{v}}(f)
=d-\deg_{\cS'}(f) & \text{if }\cS'=\cS'_{\Gamma}
 \end{cases}\\
&+
\begin{cases}
\displaystyle\sum_{\emptyset}m_{\overrightarrow{v}}(f)=0 & \text{if }\cS'\in\Gamma\setminus\{\cS'_{\Gamma}\}\\
\displaystyle\sum_{\overrightarrow{v}\in T_{\cS'}\sP^1:f_*(\overrightarrow{v})=\overrightarrow{f(\cS')\cS_0}}m_{\overrightarrow{v}}(f)=\deg_{\cS'}(f)
& \text{if }\cS'=\cS'_{\Gamma}
\end{cases}\\
=&
\begin{cases}
0 & \text{if }\cS'\in\Gamma\setminus\{\cS'_{\Gamma}\}\\
d & \text{if }\cS'=\cS'_{\Gamma}
 \end{cases}\\
=&\bigl((r_{\sP^1,\Gamma})_*\delta_{\cS_{\Gamma}}\bigr)(\{\cS'\})
+(d-1)\cdot\bigl((r_{\sP^1,\Gamma})_*\delta_{\cS_{\Gamma}}\bigr)(\{\cS'\}).
\end{align*}
Alternatively, if $\cS'$ is type III, then $T_{\cS'}\Gamma=T_{\cS'}\sP^1$ 
under the above assumption, so that
we have $((r_{\sP^1,\Gamma})_*\delta_{\cS_{\Gamma}})(\{\cS'\})=0$ and
the second term in the right hand side of \eqref{eq:alwaysgeneral}
applied to $\Gamma$ vanishes.

Hence \eqref{eq:extendedtree} holds in this subcase.

{\bfseries (Subcase b)} Suppose finally that $\cS'$ is a type III end point of $\Gamma$.
If $(f(\cS')\neq)\cS'=\cS'_{\Gamma}$,
then $r_{\sP^1,\Gamma}(f(\cS'))=\cS'$ by Lemma \ref{th:typeIIIend}(ii), so that
for every type II point $\cS''\in\Gamma$ close enough to $\cS'$, 
we also have $f(\cS'')\neq\cS''$.
Hence by \eqref{eq:truncation} and the already seen \eqref{eq:extendedtree}
applied to $[\cS'',\cS']$ and $\cS''$, we have
\begin{align*}
&(d-1)\cdot\bigl(\nu_{f,\Gamma}-(r_{\sP^1,\Gamma})_*\delta_{\cS_{\Gamma}}\bigr)(\{\cS'\})\\
=&-(d-1)\cdot\bigl(\nu_{f,[\cS'',\cS']}-(r_{\sP^1,\Gamma})_*\delta_{\cS_{\Gamma}}\bigr)(\{\cS''\})\\
=&-\bigl((-2)\cdot\nu_{[\cS'',\cS']}+(1\text{ or }2)\cdot(r_{\sP^1,\Gamma})_*\delta_{\cS_{\Gamma}}\bigr)(\{\cS''\})\\
=&-\bigl((-2)\cdot\nu_{[\cS'',\cS']}\bigr)(\{\cS''\})
=1=\bigl((-2)\cdot\nu_{\Gamma}+2(r_{\sP^1,\Gamma})_*\delta_{\cS_{\Gamma}}\bigr)(\{\cS'\}).
\end{align*}
Alternatively, if $(f(\cS')\neq)\cS'\neq \cS'_{\Gamma}$,
then $((r_{\sP^1,\Gamma})_*\delta_{\cS_{\Gamma}})(\{\cS'\})=0$.
For every type II point $\cS''\in r_{\sP^1,\Gamma}^{-1}(\cS')$ close enough 
to $\cS'$, we have not only $f(\cS'')\neq\cS''$ but also
$\#T_{\cS'}[\cS'',\cS'_{\Gamma}]=2$.
Hence by \eqref{eq:enlarging}, 
the already seen \eqref{eq:extendedtree} applied to $[\cS'',\cS'_{\Gamma}]$ and $\cS'$, and 
the already seen \eqref{eq:extendedtree} applied to $[\cS'',\cS']$ and $\cS''$,
we have
\begin{align*}
&(d-1)\cdot\bigl(\nu_{f,\Gamma}-(r_{\sP^1,\Gamma})_*\delta_{\cS_{\Gamma}}\bigr)(\{\cS'\})\\
=&(d-1)\cdot\bigl(\nu_{f,[\cS'',\cS'_{\Gamma}]}-(r_{\sP^1,[\cS'',\cS'_{\Gamma}]})_*\delta_{\cS_{\Gamma}}\bigr)(\{\cS'\})\\
&+(d-1)\cdot\bigl(\nu_{f,[\cS'',\cS']}-(r_{\sP^1,[\cS'',\cS']})_*\delta_{\cS_{\Gamma}}\bigr)(\{\cS''\})\\
=&((-2)\cdot\nu_{[\cS'',\cS'_{\Gamma}]}(\{\cS'\})+0)
+((-2)\cdot\nu_{[\cS'',\cS']}(\{\cS''\})+0)\\
=&0+(-1)=-1=(-2)\cdot\nu_{\Gamma}(\{\cS'\})\\
=&
\begin{cases}
 ((-2)\cdot\nu_{\Gamma}+(r_{\sP^1,\Gamma})_*\delta_{\cS_{\Gamma}}
)(\{\cS'\}) 
&\text{if }(f(\cS')\neq)\cS'\neq r_{\sP^1,\Gamma}(f(\cS')),\\
((-2)\cdot\nu_{\Gamma}+2(r_{\sP^1,\Gamma})_*\delta_{\cS_{\Gamma}})(\{\cS'\})
&\text{if }(f(\cS')\neq)\cS'=r_{\sP^1,\Gamma}(f(\cS')).
\end{cases}
\end{align*}
Hence \eqref{eq:extendedtree} also holds in this subcase. 

\subsection{Proof of (iii)}
Fix $\cS'\in\sP^1\setminus\Gamma_{\FR}$.
By \eqref{eq:slopeI} 
and the locally affine continuity of $\Crucial_f$ on $(\sP^1,\tilde{\rho})$
at every type I or IV point, we have \eqref{eq:slopeoutside}
if $\cS'$ is type I or type IV. 
So, suppose that $\cS'$ is either type II or III.
Then we have
$\overrightarrow{(r_{\sP^1,\Gamma_{\FR}}(\cS'))\cS'}\in 
(T_{r_{\sP^1,\Gamma_{\FR}}(\cS')}\sP^1)\setminus(T_{r_{\sP^1,\Gamma_{\FR}}(\cS')}\Gamma_{\FR})$ and
$[\cS',r_{\sP^1,\Gamma_{\FR}}(\cS')]\subset 
U_{\overrightarrow{(r_{\sP^1,\Gamma_{\FR}}(\cS'))\cS'}}\cup\{r_{\sP^1,\Gamma_{\FR}}(\cS')\}$.
Hence 
from \eqref{eq:outer} and \eqref{eq:extendedtree} applied to 
$[\cS',r_{\sP^1,\Gamma_{\FR}}(\cS')]$,
we have
\begin{align*}
 &\rd_{\overrightarrow{\cS'(r_{\sP^1,\Gamma_{\FR}}(\cS'))}}\Crucial_f
=\bigl(\Delta_{[\cS',r_{\sP^1,\Gamma_{\FR}}(\cS')]}(\Crucial_f|[\cS',r_{\sP^1,\Gamma_{\FR}}(\cS')])\bigr)(\{\cS'\})\\
=&-\nu_{[\cS',r_{\sP^1,\Gamma_{\FR}}(\cS')]}(\{\cS'\})
+\left(\nu_{f,[\cS',r_{\sP^1,\Gamma_{\FR}}(\cS')]}-(r_{\sP^1,[\cS',r_{\sP^1,\Gamma_{\FR}}(\cS')]})_*\delta_{r_{\sP^1,\Gamma_{\FR}}(\cS')}\right)(\{\cS'\})\\
=&-\frac{1}{2}
+\frac{1}{d-1}\cdot
 \begin{cases}
 0 & \text{if }f(\cS')=\cS',\\
 -1& \text{if }f(\cS')\neq\cS,
\end{cases}
\end{align*}
which with the piecewise affine continuity of $\Crucial_f$ on $(\sH^1,\rho)$
also yields \eqref{eq:slopeoutside} in this case. 

Let us see the remaining assertion.
Suppose in addition that $f(\cS')=\cS'$. Then
since $1/(d-1)\neq 0$, by \eqref{eq:slopeoutside}
and the piecewise affine continuity of $f$ 
on $(\sH^1,\rho)$, 
the component in $([\cS',r_{\sP^1,\Gamma_{\FR}}(\cS')],\rho)$ of 
the set of all fixed points of $f$ 
in $[\cS',r_{\sP^1,\Gamma_{\FR}}(\cS')]$
containing $\cS'$ is a closed interval $[\cS',\cS_*]$ for some 
$\cS_*\in(\cS',r_{\sP^1,\Gamma_{\FR}}(\cS')]$.
In fact $\cS_*=r_{\sP^1,\Gamma_{\FR}}(\cS')$; for, otherwise,
the component in $([\cS_*,r_{\sP^1,\Gamma_{\FR}}(\cS')],\rho)$
of the set of all fixed points of $f$ in 
$[\cS_*,r_{\sP^1,\Gamma_{\FR}}(\cS')]$ containing $\cS_*$
is a closed interval $[\cS_*,\cS_{**}]$ for some
$\cS_{**}\in(\cS_*,r_{\sP^1,\Gamma_{\FR}}(\cS')]$.
Then we must have $[\cS',\cS_*]\supset[\cS',\cS_*]\cup[\cS_*,\cS_{**}]=
[\cS',\cS_{**}]$, which is a contradiction.
Finally, for any type II point $\cS''\in(\cS',r_{\sP^1,\Gamma_{\FR}}(\cS')]$,
we have $f_*(\overrightarrow{\cS''\cS'})=\overrightarrow{\cS''\cS'}$. 
Now the proof
of the final assertion in (iii) is complete by Theorem \ref{th:1st3rd} and 
$U_{\overrightarrow{\cS''\cS'}}\cap\Fix(f)\subset U_{\overrightarrow{\cS''\cS'}}\cap\Gamma_{\FR}=\emptyset$. \qed

\section{Proof of Theorem \ref{th:diam}}
\label{sec:diam}

Let $f\in K(z)$ be of degree $d>1$. By the definition of $\Crucial_f$, we have
\begin{multline}
\frac{\rho(\cS,\cS_{\can})}{2}=(\Crucial_f(\cS)-\Crucial_f(\cS_{\can}))\\
+\frac{-\rho(\cS,f(\cS)\wedge_{\can}\cS)
+\int_{\sP^1}\rho(\cS_{\can},\cS\wedge_{\can}\cdot)\rd(f^*\delta_{\cS_{\can}})}{d-1}\quad\text{on }\sH^1.\label{eq:original}
\end{multline}
Fix a minimal lift $F$ of $f$.
By a standard argument from the elimination theory
(see, e.g., Kawaguchi--Silverman \cite[Proposition 2]{KS09}), we have
\begin{gather}
\frac{\|F(\cdot)\|}{\|\cdot\|^d}\ge|\Res F|(\le 1)\quad\text{on }K^2\setminus\{0\},\label{eq:homogeneouslower}
\end{gather}
so that by the definition of the continuous function $T_F$ on $\sP^1$
introduced after \eqref{eq:cohomological} (and the density of $\bP^1$ in $\sP^1$) and the equality \eqref{eq:potentialminimal}, we have
\begin{gather}
\int_{\sP^1}\rho(\cS_{\can},\cS\wedge_{\can}\cdot)\rd(f^*\delta_{\cS_{\can}})
=-T_F(\cS)\le-\log|\Res F|\quad\text{on }\sH^1.\label{eq:boundresultant}
\end{gather}

\begin{proof}[Proof of \eqref{eq:diamminresloc}]
Noting that
$\Crucial_f(\cS)-\Crucial_f(\cS_{\can})\le 0$ for every
$\cS\in\Crucial_f^{-1}(\min_{\sH^1}\Crucial_f)$, by \eqref{eq:original},
we have
\begin{gather*}
 \frac{\rho(\cS,\cS_{\can})}{2}
 \le 0+\frac{-\rho(\cS,f(\cS)\wedge_{\can}\cS)+
\int_{\sP^1}\rho(\cS_{\can},\cS\wedge_{\can}\cdot)\rd(f^*\delta_{\cS_{\can}})}{d-1},
\end{gather*}
which with \eqref{eq:boundresultant} yields \eqref{eq:diamminresloc}.
\end{proof}

\begin{proof}[Proof of \eqref{eq:diamcrucial}]
For every $\cS\in\Gamma_{\supp(\nu_{f,\Gamma_{\FR}})}\setminus\{\cS_{\can}\}$,
letting $s\mapsto\cS(s)$ be the arc-length parametrization of 
$([\cS_{\can},\cS],\rho)$ from $\cS_{\can}$ to $\cS$ by
the closed interval $[0,\rho(\cS_{\can},\cS)]$ in $\bR$, 
by the piecewise affine continuity of $\Crucial_f$
on $(\sH^1,\rho)$, we have
\begin{multline}
\Crucial_f(\cS)-\Crucial_f(\cS_{\can})
=\int_0^{\rho(\cS_{\can},\cS)}\Bigl(\frac{\rd}{\rd s}\Crucial_f(\cS(s))\Bigr)\rd s\\
 \le\rho(\cS_{\can},\cS)\cdot\sup_{\cS'\in[\cS_{\can},\cS)}
 \rd_{\overrightarrow{\cS'\cS}}\Crucial_f.\label{eq:integration}
\end{multline}

We claim that for every $\cS\in\Gamma_{\supp(\nu_{f,\Gamma_{\FR}})}\setminus\{\cS_{\can}\}$,
 \begin{gather}
\sup_{\cS'\in[\cS_{\can},\cS)}\rd_{\overrightarrow{\cS'\cS}}\Crucial_f
 \le
\begin{cases}
 0 &\text{if }\#\supp(\nu_{f,\Gamma_{\FR}})=1\\
 \frac{1}{2}-\frac{1}{d-1} & \text{if }\#\supp(\nu_{f,\Gamma_{\FR}})>1
\end{cases}\ge 0,
\label{eq:slope}
\end{gather}
which will be seen below and where
we recalled $\#\supp(\nu_{f,\Gamma_{\FR}})=1$ when $d=2$.
Once \eqref{eq:slope} is at our disposal, 
for every $\cS\in\Gamma_{\supp(\nu_{f,\Gamma_{\FR}})}$,
also using \eqref{eq:original} and \eqref{eq:integration}, 
we will have
\begin{multline*}
 \frac{\rho(\cS,\cS_{\can})}{2}
 \le
\Bigl(\frac{1}{2}-\frac{1}{d-1}\Bigr)\cdot\rho(\cS,\cS_{\can})
\\
 +\frac{-\rho(\cS,f(\cS)\wedge_{\can}\cS)+\int_{\sP^1}\rho(\cS_{\can},\cS\wedge_{\can}\cdot)\rd(f^*\delta_{\cS_{\can}})}{d-1}\quad\text{if }d>2,
\end{multline*}
which with \eqref{eq:boundresultant} yields \eqref{eq:diamcrucial}.
\end{proof}

\begin{proof}[Proof of \eqref{eq:slope}]
 Fix $\cS\in\Gamma_{\supp(\nu_{f,\Gamma_{\FR}})}\setminus\{\cS_{\can}\}$.
 Then by \eqref{eq:slopeoutside}, 
 for every $\cS'\in\bigl[\cS_{\can},r_{\sP^1,\Gamma_{\supp(\nu_{f,\Gamma_{\FR}})}}(\cS_{\can})\bigr)$, 
 \begin{gather*}
\rd_{\overrightarrow{\cS'\cS}}\Crucial_f\le-\frac{1}{2}\le 0,
 \end{gather*}
which yields \eqref{eq:slope} in the case of $\#\supp(\nu_{f,\Gamma_{\FR}})=1$,
so in particular when $d=2$.
Suppose now that $\#\supp(\nu_{f,\Gamma_{\FR}})>1$ (so $d>2$), and
fix an end point $\cS''$ of $\Gamma_{\supp(\nu_{f,\Gamma_{\FR}})}$ 
such that $\cS\in[r_{\sP^1,\Gamma_{\supp(\nu_{f,\Gamma_{\FR}})}}(\cS_{\can}),\cS'']$. 
Then $\nu_{f,\Gamma_{\FR}}(\{\cS''\})>0$, and in turn
by \eqref{eq:weight}, we have
\begin{gather}
 \frac{1}{d-1}\le\nu_{f,\Gamma_{\FR}}(\{\cS''\})=1-\nu_{f,\Gamma_{\FR}}(\Gamma_{\FR}\setminus\{\cS''\}).\label{eq:crucialtreeend}
\end{gather}

For every $\cS'\in[r_{\sP^1,\Gamma_{\supp(\nu_{f,\Gamma_{\FR}})}}(\cS_{\can}),\cS)$,
 by the convexity (and the piecewise affine continuity) of $\Crucial_f$
 on $([r_{\sP^1,\Gamma_{\supp(\nu_{f,\Gamma_{\FR}})}}(\cS_{\can}),\cS''],\rho)$, we have
 \begin{gather*}
\rd_{\overrightarrow{\cS'\cS}}\Crucial_f
\le-\rd_{\overrightarrow{\cS''(r_{\sP^1,\Gamma_{\supp(\nu_{f,\Gamma_{\FR}})}}(\cS_{\can}))}}\Crucial_f,
 \end{gather*}
and moreover, by \eqref{eq:slopeinside} and \eqref{eq:crucialtreeend}, we also have
 \begin{align*}
 &\rd_{\overrightarrow{\cS''(r_{\sP^1,\Gamma_{\supp(\nu_{f,\Gamma_{\FR}})}}(\cS_{\can}))}}\Crucial_f\\
 =&\frac{1}{2}-\nu_{f,\Gamma_{\FR}}\Bigl(\Gamma_{\FR}\cap U_{\overrightarrow{\cS''(r_{\sP^1,\Gamma_{\supp(\nu_{f,\Gamma_{\FR}})}}(\cS_{\can}))}}\Bigr)\\
=&\frac{1}{2}-\nu_{f,\Gamma_{\FR}}(\Gamma_{\FR}\setminus\{\cS''\})
\ge-\frac{1}{2}+\frac{1}{d-1}.
 \end{align*}
Hence \eqref{eq:slope} also holds in the case of $\#\supp(\nu_{f,\Gamma_{\FR}})>1$.
\end{proof}

\begin{remark}\label{th:geomiterate}
Fix $n\in\bN$. The $n$-th iteration $F^n$ of a minimal lift $F$ of $f$ is 
not necessarily a minimal lift of $f^n$ 
(see, e.g.,\
\cite[Proposition 3]{JacobsWilliams16}).
Under the assumption (*) that {\em $F^n$ is still a minimal lift of $f^n$},
setting $d>1$, by an argument similar to that used to
obtain \eqref{eq:boundresultant}, we have
\begin{multline*}
 -\int_{\sP^1}\rho(\cS_{\can},\cS\wedge_{\can}\cdot)\rd((f^n)^*\delta_{\cS_{\can}})\\
=T_{F^n}(\cS)=\sum_{j=0}^{n-1}d^{n-1-j}\cdot T_F(f^j(\cS))
\ge\frac{d^n-1}{d-1}\log|\Res F| 
\end{multline*}
on $\sH^1$,
which with the definition of $\Crucial_{f^n}$ yields
\begin{multline*}
 \Crucial_{f^n}(\cS)-\Crucial_{f^n}(\cS_{\can})\\
\ge\frac{\rho(\cS,\cS_{\can})}{2}
+\frac{\rho(\cS,f^n(\cS)\wedge_{\can}\cS)}{d^n-1}+\frac{\log|\Res F|}{d-1}\quad\text{on }\sH^1.
\end{multline*}
This improves Jacobs \cite[Proposition 8, where
the condition (*) was implicitly assumed]{Jacobs17}.
\end{remark}

\section{Proof of Theorem \ref{th:quantitative}}
\label{sec:quantitative}

Let $f\in K(z)$ be of degree $d>1$. 

\begin{lemma}\label{th:outsidebound}
Let $\Gamma$ be a non-trivial finite subtree $\Gamma$ in $\sP^1$,
and suppose that there are
a point $\cS_{\Gamma}\in\Gamma_{\FR}$
and a direction
$\overrightarrow{v_{\Gamma}}\in(T_{\cS_{\Gamma}}\sP^1)\setminus(T_{\cS_{\Gamma}}\Gamma_{\FR})$ such that
$\Gamma\subset U_{\overrightarrow{v_{\Gamma}}}\cup\{\cS_{\Gamma}\}$. Then
setting $\cS'_{\Gamma}:=r_{\sP^1,\Gamma}(\cS_{\Gamma})$, we have
\begin{gather}
\bigl|\nu_{f,\Gamma}-(r_{\sP^1,\Gamma})_*\delta_{\cS_{\Gamma}}\bigr|(\Gamma)
\le\frac{2\cdot\#(\{\text{end points of }\Gamma\}
\setminus\{\cS'_{\Gamma}\})}{d-1}.
\label{eq:errorbranch}
\end{gather}
\end{lemma}

The following general fact is independent of $f$;
for every non-trivial finite subtree $\Gamma$ in $\sP^1$,
every (possibly trivial) subtree $\Gamma'$ in $\Gamma$, 
and every component $C$ of $\Gamma\setminus\Gamma'$ in $(\Gamma,\rho)$,
the closure $\overline{C}$ of $C$ in $(\Gamma,\rho)$
is a non-trivial finite subtree in $\Gamma$ and $\overline{C}\setminus C=\{\cS_C\}$ 
for some end point $\cS_C$ of $\overline{C}$, 
and we compute
\begin{align}
\notag&\bigl|(-2)\cdot\nu_{\Gamma}\bigr|(C)\\
\notag=&(-2)\cdot\nu_{\Gamma}(\overline{C}\setminus\{\text{end points of }\overline{C}\})-(-2)\cdot\nu_{\Gamma}(\{\text{end points of }\overline{C}\}\setminus\{\cS_C\})\\
\notag=&(-2)\cdot\nu_{\overline{C}}(\overline{C}\setminus\{\text{end points of }\overline{C}\})+\#(\{\text{end points of }\overline{C}\}\setminus\{\cS_C\})\\
\notag=&(-2)\cdot\nu_{\overline{C}}(\overline{C})
-(-2)\cdot\nu_{\overline{C}}(\{\text{end points of }\overline{C}\})
+\#(\{\text{end points of }\overline{C}\}\setminus\{\cS_C\})\\
\notag=&(-2)\cdot\nu_{\overline{C}}(\overline{C})
+2\cdot\#(\{\text{end points of }\overline{C}\}\setminus\{\cS_C\})+1\\
=&2\cdot\#(\{\text{end points of }\overline{C}\}\setminus\{\cS_C\})-1.\label{eq:errobranchcomponent}
\end{align}

\begin{proof}[Proof of Lemma $\ref{th:outsidebound}$]
Fix a non-trivial finite subtree $\Gamma$ in $\sP^1$ as in the Lemma, and set 
$\cS'_{\Gamma}:=r_{\sP^1,\Gamma}(\cS_{\Gamma})$.
 
{\bfseries (a)}
Suppose that $f$ has a fixed point in $\Gamma$, and
let $\Gamma'$ be the set of all fixed points of $f$ in $\Gamma$.
Then using Theorem \ref{th:defining}(iii),
$\Gamma'$ is a (possibly trivial) subtree in $\Gamma$ 
and $\cS'_{\Gamma}\in\Gamma'$.
Let $F_{\Gamma}$ be the set of all points in $\Gamma$ satisfying
the condition (A2) in \eqref{eq:extendedtree}. Then $F_{\Gamma}\subset\Gamma'$.

If $\Gamma'\neq\{\cS'_{\Gamma}\}$, then
from Theorem \ref{th:defining}(iii), we have
$F_{\Gamma}\subset\{\text{end points of }\Gamma'\}\setminus\{\cS'_{\Gamma}\}$.
Also using Theorem \ref{th:defining}(ii), we have 
\begin{gather*}
(d-1)\cdot\bigl(\nu_{f,\Gamma}-(r_{\sP^1,\Gamma})_*\delta_{\cS_{\Gamma}}\bigr)
\begin{cases}
 =(-2)\cdot\nu_{\Gamma} &\text{on }\Gamma\setminus\Gamma',\\
 =0 & \text{on }\Gamma'\setminus F_{\Gamma},\\
=(-2)\cdot\nu_{\Gamma}+1 & \text{on }F_{\Gamma}.
\end{cases}
\end{gather*}
Hence, by \eqref{eq:errobranchcomponent} and
\begin{gather*}
\sum_{\cS\in\{\text{end points of }\Gamma'\}\setminus\{\cS'_{\Gamma}\}}(v_{\Gamma}(\cS)-1)
\le \#\{\text{components of }\Gamma\setminus\Gamma'\},
\end{gather*}
we have
\begin{align*}
 &(d-1)\cdot\bigl|\nu_{f,\Gamma}-(r_{\sP^1,\Gamma})_*\delta_{\cS_{\Gamma}}\bigr|(\Gamma)\\
=&\bigl|(-2)\cdot\nu_{\Gamma}\bigr|(\Gamma\setminus\Gamma')
+\bigl|(d-1)\cdot
\bigl(\nu_{f,\Gamma}-(r_{\sP^1,\Gamma})_*\delta_{\cS_{\Gamma}}\bigr)\bigr|(\Gamma')\\
\le& 
\bigl(2\cdot\#(\{\text{end points of }\Gamma\}\setminus\Gamma')
-\#\{\text{components of }(\Gamma\setminus\Gamma')\}\bigr)\\
&+\sum_{\cS\in F_{\Gamma}}(v_{\Gamma}(\cS)-1)\\ 
\le&2\cdot\#(\{\text{end points of }\Gamma\}\setminus\Gamma')
\le 2\cdot\#(\{\text{end points of }\Gamma\}
\setminus\{\cS'_{\Gamma}\}),
\end{align*}
so \eqref{eq:errorbranch} holds in this case.

Alternatively, if $\Gamma'=\{\cS'_{\Gamma}\}$, then
using Theorem \ref{th:defining}(ii), we have 
\begin{gather*}
(d-1)\cdot\bigl(\nu_{f,\Gamma}-(r_{\sP^1,\Gamma})_*\delta_{\cS_{\Gamma}}\bigr)
=
\begin{cases}
(-2)\cdot\nu_{\Gamma} &\text{on }\Gamma\setminus\Gamma'=\Gamma\setminus\{\cS'_{\Gamma}\},\\
0 & \text{on }\Gamma'=\{\cS'_{\Gamma}\}\text{ and }F_{\Gamma}=\emptyset,\\
(-2)\cdot\nu_{\Gamma}+2 & \text{on }\Gamma'=\{\cS'_{\Gamma}\}=F_{\Gamma}.
\end{cases}
\end{gather*}
Hence by \eqref{eq:errobranchcomponent} and
\begin{gather*}
 v_{\Gamma}(\cS_{\Gamma}')
 =\#T_{\cS_{\Gamma}'}\Gamma=\#\bigl\{\text{components of }\Gamma\setminus\{\cS_{\Gamma}'\}\bigr\},
\end{gather*}
we similarly have
\begin{align*}
 &(d-1)\cdot\bigl|
\nu_{f,\Gamma}-(r_{\sP^1,\Gamma})_*\delta_{\cS_{\Gamma}}\bigr|(\Gamma)\\
\le&\bigl|(-2)\cdot\nu_{\Gamma}\bigr|(\Gamma\setminus\{\cS_{\Gamma}'\})
+\bigl|(-2)\cdot\nu_{\Gamma}(\{\cS_{\Gamma}'\})+2\bigr|\\
\le& 
\bigl(2\cdot\#(\{\text{end points of }\Gamma\}\setminus\{\cS_{\Gamma}'\})-\#\{\text{components of }(\Gamma\setminus\{\cS_{\Gamma}'\})\}\bigr)\\
&+v_{\Gamma}(\cS_{\Gamma}')\\
=&2\cdot\#(\{\text{end points of }\Gamma\}\setminus\{\cS_{\Gamma}'\}),
\end{align*}
so \eqref{eq:errorbranch} also holds in this case.

{\bfseries (b)}
If $f$ has no fixed points in $\Gamma$, then
set $\Gamma':=\Gamma_{\{\cS_{\Gamma}'\}}=\{\cS_{\Gamma}'\}$.
Using Theorem \ref{th:defining}(ii), 
an argument similar to that in the latter half of the case (a) yields
\begin{align*}
 &(d-1)\cdot\bigl|
\nu_{f,\Gamma}-(r_{\sP^1,\Gamma})_*\delta_{\cS_{\Gamma}}\bigr|(\Gamma)\\
\le&\bigl|(-2)\cdot\nu_{\Gamma}\bigr|(\Gamma\setminus\{\cS_{\Gamma}'\})
+\bigl|(-2)\cdot\nu_{\Gamma}(\{\cS_{\Gamma}'\})+(1\text{ or }2)\bigr|\\
\le& 
\bigl(2\cdot\#(\{\text{end points of }\Gamma\}\setminus\{\cS_{\Gamma}'\})-\#\{\text{components of }(\Gamma\setminus\{\cS_{\Gamma}'\})\}\bigr)\\
&+\bigl(v_{\Gamma}(\cS_{\Gamma}')+(-1\text{ or }0)\bigr)\\
\le&2\cdot\#(\{\text{end points of }\Gamma\}\setminus\{\cS_{\Gamma}'\}).
\end{align*}
This completes the proof of Lemma \ref{th:outsidebound}.
\end{proof}

We recall a construction of the $f$-equilibrium
(or canonical) measure
$\mu_f$ on $\sP^1$
(\cite[\S10]{BR10}, \cite[\S2]{ChambertLoir06}, \cite[\S3.1]{FR09}).

\begin{lemma}\label{th:construction}
 For every $\cS_0\in\sH^1$ and every $n\in\bN$,
\begin{gather}
 \mu_f-\frac{(f^n)^*\delta_{\cS_0}}{d^n}
 =\Delta\Biggl(\sum_{j=n}^{\infty}\frac{\int_{\sP^1}\left(-\rho(\cS_0,f^j(\cdot)\wedge_{\cS_0}\cS')\right)\rd(f^*\delta_{\cS_0})(\cS')}{d^{j+1}}\Biggr)
\quad\text{on }\sP^1.\label{eq:telescope}
\end{gather}
\end{lemma}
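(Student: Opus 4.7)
The plan is to construct the potential
$$\psi(\cS) := \int_{\sP^1}\bigl(-\rho(\cS_0,\cS\wedge_{\cS_0}\cS')\bigr)\rd(f^*\delta_{\cS_0})(\cS')\quad\text{on }\sP^1,$$
and exploit the telescoping structure of $\sum_{j}(\psi\circ f^j)/d^{j+1}$ under $\Delta$. First I would note that $\psi$ is continuous on $(\sH^1,\rho)$ (already observed in the proof of Theorem \ref{th:resultant}) and bounded on $\sP^1$ by $C_{\cS_0,f}$ from Theorem \ref{th:quantitative}, since $\cS\wedge_{\cS_0}\cS'\in[\cS_0,\cS']$ yields $\rho(\cS_0,\cS\wedge_{\cS_0}\cS')\le\rho(\cS_0,\cS')$. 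Applying \eqref{eq:fundamental} pointwise in $\cS'$ and swapping $\Delta$ with the Radon integral via Fubini (justified since $f^*\delta_{\cS_0}$ is a finite atomic measure), one gets
$$\Delta\psi=\int_{\sP^1}\Delta\bigl(-\tilde{\rho}(\cS_0,\cdot\wedge_{\cS_0}\cS')\bigr)\rd(f^*\delta_{\cS_0})(\cS')=\int_{\sP^1}(\delta_{\cS'}-\delta_{\cS_0})\rd(f^*\delta_{\cS_0})(\cS')=f^*\delta_{\cS_0}-d\cdot\delta_{\cS_0}$$
on $\sP^1$, setting $d:=\deg f$.

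Next I would invoke the functoriality \eqref{eq:functorial} of $\Delta$ under pullbacks: for every $j\in\bN\cup\{0\}$,
$$\Delta(\psi\circ f^j)=(f^j)^*\Delta\psi=(f^{j+1})^*\delta_{\cS_0}-d\cdot(f^j)^*\delta_{\cS_0}\quad\text{on }\sP^1.$$
Dividing by $d^{j+1}$ and summing from $j=n$ to $j=N$, the right-hand side telescopes to
$$\Delta\Biggl(\sum_{j=n}^{N}\frac{\psi\circ f^j}{d^{j+1}}\Biggr)=\frac{(f^{N+1})^*\delta_{\cS_0}}{d^{N+1}}-\frac{(f^n)^*\delta_{\cS_0}}{d^n}\quad\text{on }\sP^1.$$

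Finally I would pass to the limit $N\to\infty$. Since $\sup_{\sP^1}|\psi\circ f^j|\le C_{\cS_0,f}$ for every $j$, the series $\sum_{j\ge n}(\psi\circ f^j)/d^{j+1}$ converges uniformly on $\sP^1$, and for any BDV test function $\varphi$ on $\sP^1$, Green's pairing $\int\varphi\rd\Delta(\cdot)=\int(\cdot)\rd\Delta\varphi$ together with dominated convergence against the finite signed measure $\Delta\varphi$ shows that the distributional Laplacian of the partial sums converges to the distributional Laplacian of the full series. On the right-hand side, the standard weak convergence $(f^N)^*\delta_{\cS_0}/d^N\to\mu_f$ as $N\to\infty$, which is built into the construction of $\mu_f$ (see Baker--Rumely \cite[\S 10]{BR10}, Chambert-Loir \cite{ChambertLoir06}, and Favre--Rivera-Letelier \cite[\S 3.1]{FR09} as cited in Section \ref{sec:dynamics}), yields \eqref{eq:telescope} as an equality of Radon measures on $\sP^1$. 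The main delicate point is the interchange of $\Delta$ with the infinite sum, but the uniform boundedness of $\psi$ together with the weak continuity of the distributional Laplacian on uniformly convergent sequences of bounded continuous functions makes this routine.
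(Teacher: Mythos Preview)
Your proof is correct and follows essentially the same route as the paper: both compute $\Delta\psi=f^*\delta_{\cS_0}-d\cdot\delta_{\cS_0}$ from \eqref{eq:fundamental}, pull back via \eqref{eq:functorial} to get $\Delta(\psi\circ f^j)=(f^{j+1})^*\delta_{\cS_0}-d\cdot(f^j)^*\delta_{\cS_0}$, and telescope. The only cosmetic difference is that the paper sums from $j=0$ to $n-1$ and uses the uniform convergence of the series to \emph{establish} the weak limit $(f^n)^*\delta_{\cS_0}/d^n\to\mu_f$ internally, whereas you sum from $j=n$ to $N$ and invoke that convergence as a known fact from the cited references; either way the identity \eqref{eq:telescope} drops out.
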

 
\begin{proof}
 For every $\cS_0\in\sH^1$ and every $n\in\bN$,
 by \eqref{eq:fundamental} and \eqref{eq:functorial}, we have
\begin{gather*}
 \frac{(f^n)^*\delta_{\cS_0}}{d^n}-\delta_{\cS_0}
=\Delta\Biggl(\sum_{j=0}^{n-1}
\frac{\int_{\sP^1}
\bigl(-\rho(\cS_0,f^j(\cdot)\wedge_{\cS_0}\cS')\bigr) 
\rd(f^*\delta_{\cS_0})(\cS')
}{d^{j+1}}\Biggr)\quad\text{on }\sP^1,
\end{gather*} 
and 
$\cS\mapsto\int_{\sP^1}(-\rho(\cS_0,\cS\wedge_{\cS_0}\cdot))\rd(f^*\delta_{\cS_0})$ is continuous (and bounded) on $\sP^1$.
Hence the weak limit $\lim_{n\to\infty}(f^n)^*\delta_{\cS_0}/d^n$
on $\sP^1$ exists and satisfies the definition (or characterization) of $\mu_f$
(stated in Subsection \ref{sec:dynamics}), and we have \eqref{eq:telescope}. 
\end{proof}

The following improves Jacobs \cite[Theorem 4]{Jacobs17}.

\begin{lemma}
For every $n\in\bN$, every $\cS\in\sH^1$, and every $\cS_0\in\sH^1$,
\begin{multline}
\biggl|\Crucial_{f^n}(\cS)-\Crucial_{f^n}(\cS_0)-
\biggl(\frac{\rho(\cS,\cS_0)}{2}
-\int_{\sP^1}\rho(\cS_0,\cS\wedge_{\cS_0}\cdot)\rd\mu_f\biggr)\biggr|\\
\le\frac{2\bigl(C_{\cS_0,f}/(d-1)+\rho(\cS,\cS_0)\bigr)}{d^n-1},\label{eq:potentialconv}
\end{multline} 
where $C_{\cS_0,f}:=\sup_{\cS\in\sP^1}\int_{\sP^1}\rho(\cS_0,\cS\wedge_{\cS_0}\cdot)\rd(f^*\delta_{\cS_0})<\infty$ as in Theorem $\ref{th:quantitative}$. 
\end{lemma}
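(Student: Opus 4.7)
The plan is to apply \eqref{eq:conjugate} to $f^n$ (which has degree $d^n>1$) and then recognize the discrepancy between the resulting expression and the claimed limit as the pairing of a standard Hsia-type potential against the signed measure $\mu_f-(f^n)^*\delta_{\cS_0}/d^n$, which is exactly the Laplacian of the explicit telescoping function appearing in Lemma \ref{th:construction}.

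Concretely, setting $R_n:=\rho(\cS,f^n(\cS)\wedge_{\cS_0}\cS)$, $I_n:=\int_{\sP^1}\rho(\cS_0,\cS\wedge_{\cS_0}\cdot)\rd((f^n)^*\delta_{\cS_0})$, and $I_\mu:=\int_{\sP^1}\rho(\cS_0,\cS\wedge_{\cS_0}\cdot)\rd\mu_f$, the application of \eqref{eq:conjugate} to $f^n$ leads after a routine algebraic rearrangement to
\[
\Crucial_{f^n}(\cS)-\Crucial_{f^n}(\cS_0)-\frac{\rho(\cS,\cS_0)}{2}+I_\mu=\frac{R_n-I_\mu+d^n\bigl(I_\mu-I_n/d^n\bigr)}{d^n-1}.
\]
The first two numerator terms are controlled by the easy bounds $R_n\le\rho(\cS,\cS_0)$ (because $f^n(\cS)\wedge_{\cS_0}\cS$ lies on $[\cS_0,\cS]$) and $0\le I_\mu\le\rho(\cS,\cS_0)$ (since $\mu_f$ is a probability measure and the integrand is bounded by $\rho(\cS_0,\cS)$).

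For the middle numerator term, I would invoke Lemma \ref{th:construction} to write $\mu_f-(f^n)^*\delta_{\cS_0}/d^n=\Delta G_n$ with
\[
G_n:=\sum_{j=n}^{\infty}d^{-(j+1)}\int_{\sP^1}\bigl(-\rho(\cS_0,f^j(\cdot)\wedge_{\cS_0}\cS')\bigr)\rd(f^*\delta_{\cS_0})(\cS').
\]
By the very definition of $C_{\cS_0,f}$, the $j$-th integrand is uniformly bounded in absolute value by $C_{\cS_0,f}$, so summing a geometric series yields $\|G_n\|_{\infty}\le C_{\cS_0,f}/(d^n(d-1))$. Combining the identity $\Delta(-\rho(\cS_0,\cS\wedge_{\cS_0}\cdot))=\delta_\cS-\delta_{\cS_0}$ from \eqref{eq:fundamental} with Green's integration by parts on $\sP^1$ then gives
\[
I_\mu-I_n/d^n=\int_{\sP^1}\rho(\cS_0,\cS\wedge_{\cS_0}\cdot)\rd(\Delta G_n)=G_n(\cS_0)-G_n(\cS),
\]
whence $|d^n(I_\mu-I_n/d^n)|\le 2C_{\cS_0,f}/(d-1)$. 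Assembling the three numerator bounds completes the proof of \eqref{eq:potentialconv}.

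The main obstacle is the justification of Green's formula on $\sP^1$: both that $G_n$ lies in the BDV class and that the symmetric pairing $\int\phi\rd\Delta\psi=\int\psi\rd\Delta\phi$ is valid for the choice $\phi=\rho(\cS_0,\cS\wedge_{\cS_0}\cdot)$ and $\psi=G_n$. This should be handled by noting that the series defining $G_n$ converges uniformly, that each summand is a bounded continuous BDV function (a bounded-mass average of standard Hsia-type potentials from \eqref{eq:Gromovevery}) with total variation controlled by the mass of $f^*\delta_{\cS_0}$, and that the tail still has summable total variation, placing $G_n$ squarely within the continuous BDV theory referenced in Subsection \ref{sec:laplacian}.
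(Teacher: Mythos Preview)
Your proof is correct and follows essentially the same route as the paper's: apply \eqref{eq:conjugate} to $f^n$, split the resulting error as $(R_n-I_\mu)/(d^n-1)+d^n(I_\mu-I_n/d^n)/(d^n-1)$, bound $|R_n-I_\mu|\le 2\rho(\cS,\cS_0)$ trivially, and handle $I_\mu-I_n/d^n$ via Lemma \ref{th:construction}, \eqref{eq:fundamental}, and Green's formula to obtain $G_n(\cS_0)-G_n(\cS)$ with the geometric-series bound on $\|G_n\|_\infty$. Your discussion of the BDV justification for Green's formula is more explicit than the paper's bare citation, but the underlying argument is the same.
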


\begin{proof}
For every $n\in\bN$, every $\cS\in\sH^1$, and every $\cS_0\in\sH^1$,
by \eqref{eq:conjugate}, Green's formula, and \eqref{eq:fundamental}, we have 
\begin{align*}
&\biggl|\Crucial_{f^n}(\cS)-\Crucial_{f^n}(\cS_0)-
\biggl(\frac{\rho(\cS,\cS_0)}{2}
-\int_{\sP^1}\rho(\cS_0,\cS\wedge_{\cS_0}\cdot)\rd\mu_f\biggr)\biggr|\\
\le&\frac{d^n}{d^n-1}\biggl|\int_{\sP^1}\rho(\cS_0,\cS\wedge_{\cS_0}\cdot)
\rd\Bigl(\mu_f-\frac{(f^n)^*\delta_{\cS_0}}{d^n}\Bigr)\biggr|\\
&+\frac{|\rho(\cS,f(\cS)\wedge_{\cS_0}\cS)-\int_{\sP^1}\rho(\cS_0,\cS\wedge_{\cS_0}\cdot)\rd\mu_f|}{d^n-1}\\
\le&\frac{d^n}{d^n-1}\Biggl|\int_{\sP^1}\sum_{j=n}^{\infty}
\frac{\int_{\sP^1}\rho(\cS_0,f^j(\cdot)\wedge_{\cS_0}\cS')
\rd(f^*\delta_{\cS_0})(\cS')}{d^{j+1}}
\Delta\bigl(\rho(\cS_0,\cS\wedge_{\cS_0}\cdot)\bigr)\Biggr|\\
&+\frac{2\cdot\rho(\cS,\cS_0)}{d^n-1}
\le\frac{2\bigl(C_{\cS_0,f}/(d-1)+\rho(\cS,\cS_0)\bigr)}{d^n-1},
\end{align*} 
which completes the proof.
\end{proof}

Fix a continuous test function $\phi$ on $\sP^1$ such that
$\phi|\Gamma$ is continuous on $(\Gamma,\rho)$ for some finite tree $\Gamma$ in
$\sH^1$ and that $\phi=(r_{\sP^1,\Gamma})^*\phi$ on $\sP^1$.
Since \eqref{eq:quantitative} is clear when $\Gamma$ is trivial,
we also assume that $\Gamma$ is non-trivial.

Fix now $n\in\bN$. 
Set
\begin{gather*}
 \mathcal{C}_n=\mathcal{C}_{n,\Gamma}:= 
\bigl\{\text{components }C\text{ of }
 (\Gamma\setminus\Gamma_{f^n,\FR})\text{ in }(\Gamma,\rho)\bigr\},
\end{gather*}
so that for each $C\in \mathcal{C}_n$,
the closure $\overline{C}$ of $C$ in $(\Gamma,\rho)$
is a non-trivial finite subtree in $\Gamma$,
$r_{\Gamma,\Gamma_{f^n,\FR}}(C)=\{\cS_{n,C}\}$ 
for some point $\cS_{n,C}\in\Gamma_{f^n,\FR}$, and 
$\overline{C}\subset U_{\overrightarrow{v_{n,C}}}\cup\{\cS_{n,C}\}$ 
for some direction 
$\overrightarrow{v_{n,C}}\in(T_{\cS_{n,C}}\sP^1)
\setminus(T_{\cS_{n,C}}\Gamma_{f^n,\FR})$. 
If in addition $\Gamma\cap\Gamma_{f^n,\FR}=\emptyset$, then 
$\mathcal{C}_n=\{\Gamma\}$ and $\overline{\Gamma}=\Gamma$, 
and setting $\cS_{n,\Gamma}':=r_{\sP^1,\Gamma}(\cS_{n,\Gamma})$, we have
$r_{\sP^1,\Gamma}(\Gamma_{f^n,\FR})=\{\cS_{n,\Gamma}'\}$;
also set $I_n=I_{n,\Gamma}:=\Gamma_{\{\cS_{n,\Gamma}',\,\cS_{n,\Gamma}\}}=[\cS_{n,\Gamma}',\cS_{n,\Gamma}]$(,
which joins $\Gamma_{f^n,\FR}$ with $\Gamma$).

Set 
$\Gamma_n
:=\Gamma_{\{\text{end points of either }\Gamma_{f^n,\FR}\text{ or }\Gamma\}}$,
so that both $\Gamma$ and $\Gamma_{f^n,\FR}$ are subtrees in $\Gamma_n$.
Recalling
the definitions \eqref{eq:valency} and \eqref{eq:crucialgeneral}, 
we have
\begin{align*}
 \nu_{f^n,\Gamma_n}
 =&(\iota_{\Gamma_{f^n,\FR},\Gamma_n})_*\nu_{f^n,\Gamma_{f^n,\FR}}\\
&+\begin{cases}
 \sum_{C\in \mathcal{C}_n}(\iota_{\overline{C},\Gamma_n})_*\bigl(\nu_{f^n,\overline{C}}-(r_{\sP^1,\overline{C}})_*\delta_{\cS_{n,C}}\bigr) &\text{if }\Gamma\cap\Gamma_{f^n,\FR}\neq\emptyset\\
 (\iota_{\Gamma,\Gamma_n})_*\nu_{f^n,\Gamma} &\text{if }\Gamma\cap\Gamma_{f^n,\FR}=\emptyset 
 \end{cases}\\ 
&+\begin{cases}
  0 &\text{if }\Gamma\cap\Gamma_{f^n,\FR}\neq\emptyset\\
  \bigl(\iota_{I_n,\Gamma_n}\bigr)_*\bigl(\nu_{f^n,I_n}
 -(r_{\sP^1,I_n})_*(\delta_{\cS_{n,\Gamma}}+\delta_{\cS'_{n,\Gamma}})\bigr)
  &\text{if }\Gamma\cap\Gamma_{f^n,\FR}=\emptyset
 \end{cases}\\
=&(\iota_{\Gamma_{f^n,\FR},\Gamma_n})_*\nu_{f^n,\Gamma_{f^n,\FR}}
+\sum_{C\in \mathcal{C}_n}(\iota_{\overline{C},\Gamma_n})_*\bigl(\nu_{f^n,\overline{C}}-(r_{\sP^1,\overline{C}})_*\delta_{\cS_{n,C}}\bigr)\\
&+\begin{cases}
  0 &\text{if }\Gamma\cap\Gamma_{f^n,\FR}\neq\emptyset\\
  \bigl(\iota_{I_n,\Gamma_n}\bigr)_*\bigl(\nu_{f^n,I_n}
 -(r_{\sP^1,I_n})_*\delta_{\cS'_{n,\Gamma}}\bigr)
  &\text{if }\Gamma\cap\Gamma_{f^n,\FR}=\emptyset
 \end{cases}\quad\text{on }\Gamma_n;
\end{align*}
even if $\Gamma\cap\Gamma_{f^n,\FR}=\emptyset$, we have
\begin{multline*}
 (r_{\Gamma_n,\Gamma})_*
 \bigl((\iota_{I_n, \Gamma_n})_*
 (\nu_{f^n,I_n}-(r_{\sP^1,I_n})_*\delta_{\cS'_{n,\Gamma}})\bigr)\\
 =\bigl((\nu_{f^n,I_n}-(r_{\sP^1,I_n})_*\delta_{\cS'_{n,\Gamma}})(I_n)\bigr)\cdot
 (r_{\sP^1,\Gamma})_*\delta_{\cS_{n,\Gamma}}
 =0\quad\text{on }\Gamma.
\end{multline*}
Hence also by the formula $\eqref{eq:formulameasure}$ for $\nu_{f^n}$
(and $\overline{C}\subset\Gamma$), we have
\begin{align*}
&(r_{\Gamma_n,\Gamma})_*\nu_{f^n,\Gamma_n}\\ 
=&
(r_{\Gamma_n,\Gamma})_*(\iota_{\Gamma_{f^n,\FR},\Gamma_n})_*\nu_{f^n,\Gamma_{f^n,\FR}}
+\sum_{C\in \mathcal{C}_n}(r_{\Gamma_n,\Gamma})_*(\iota_{\overline{C},\Gamma_n})_*\bigl(\nu_{f^n,\overline{C}}-(r_{\sP^1,\overline{C}})_*\delta_{\cS_{n,C}}\bigr)\\
=&
(r_{\sP^1,\Gamma})_*\nu_{f^n}
+\sum_{C\in \mathcal{C}_n}(\iota_{\overline{C},\Gamma})_*\bigl(\nu_{f^n,\overline{C}}-(r_{\sP^1,\overline{C}})_*\delta_{\cS_{n,C}}\bigr).
\end{align*}

Now fix also $\cS_0\in\sH^1$. Then 
using the above computation of $(r_{\Gamma_n,\Gamma})_*\nu_{f^n,\Gamma_n}$,
we have
\begin{align*}
 &\Delta_{\Gamma}
 (\Bigl(\Crucial_{f^n}-\Crucial_{f^n}(\cS_0)-
 \frac{\rho(\cdot,\cS_0)}{2}
 +\int_{\sP^1}\rho(\cS_0,\cdot\wedge_{\cS_0}\cS')\rd\mu_f(\cS')\Bigr)|\Gamma)\\
=&\Delta_{\Gamma}
(\Bigl(\frac{(\cS\mapsto\rho(\cS,f^n(\cS)\wedge_{\cS_0}\cS))}{d^n-1}\\
&+\int_{\sP^1}(-\rho(\cS_0,\cdot\wedge_{\cS_0}\cS'))
 \rd\Bigl(\frac{(f^n)^*\delta_{\cS_0}}{d^n-1}(\cS')-\mu_f\Bigr)(\cS')\Bigr)|\Gamma)\quad(\text{by }\eqref{eq:conjugate}\text{ for }f^n)
\\
=&(r_{\Gamma_n,\Gamma})_*
\Bigl(\nu_{f^n,\Gamma_n}-\frac{(r_{\sP^1,\Gamma_n})_*((f^n)^*\delta_{\cS_0}-\delta_{\cS_0})}{d^n-1}\Bigr)
\quad(\text{by }\eqref{eq:laplacian}\text{ and } 
\eqref{eq:geometric}\text{ for }\nu_{f^n,\Gamma_n})\\
&+(r_{\sP^1,\Gamma})_*\biggl(\frac{(f^n)^*\delta_{\cS_0}}{d^n-1}-\mu_f
-\Bigl(\frac{d^n}{d^n-1}-1\Bigr)\delta_{\cS_0}\biggr)
\quad(\text{and by }\eqref{eq:fundamental}\\
&\hspace*{0.65\textwidth}\text{ and Fubini's theorem})\\
=&(r_{\Gamma_n,\Gamma})_*\nu_{f^n,\Gamma_n}-(r_{\sP^1,\Gamma})_*\mu_f\\
 =&(r_{\sP^1,\Gamma})_*(\nu_{f^n}-\mu_f)
+\sum_{C\in \mathcal{C}_n}
 (\iota_{\overline{C},\Gamma})_*\bigl(\nu_{f^n,\overline{C}}-
 (r_{\sP^1,\overline{C}})_*\delta_{\cS_{n,C}}\bigr).
\end{align*}
Once this computation of the (signed) Radon measure
$(r_{\sP^1,\Gamma})_*(\nu_{f^n}-\mu_f)$ on $\Gamma$
is at our disposal,
setting $\Gamma_n':=r_{\sP^1,\Gamma}(\Gamma_{f^n,\FR})$, we have
\begin{align*}
&\left|\int_{\sP^1}\phi\rd(\nu_{f^n}-\mu_f)\right|
=\left|\int_{\Gamma}\phi\rd\bigl((r_{\sP^1,\Gamma})_*(\nu_{f^n}-\mu_f)\bigr)\right|
\quad (\text{since }\phi=(r_{\sP^1,\Gamma})^*\phi)\\
\le&\frac{2\bigl(C_{\cS_0,f}/(d-1)+\sup_{\Gamma}\rho(\cdot,\cS_0)\bigr)}{d^n-1}
\cdot|\Delta_{\Gamma}(\phi|\Gamma)|(\Gamma)\quad(\text{by Green's formula and \eqref{eq:potentialconv}})\\
&+\biggl(\sum_{C\in \mathcal{C}_n}\bigl|\nu_{f^n,\overline{C}}-
 (r_{\sP^1,\overline{C}})_*\delta_{\cS_{n,C}}\bigr|(\overline{C})\biggr)\cdot\sup_{\Gamma}|\phi|\\
\le&\frac{2\bigl(C_{\cS_0,f}/(d-1)+\sup_{\Gamma}\rho(\cdot,\cS_0)\bigr)}{d^n-1}
\cdot|\Delta_{\Gamma}(\phi|\Gamma)|(\Gamma)\\
&+\frac{2\cdot\#(\{\text{end points of }\Gamma\}\setminus \Gamma'_n)}{d^n-1}\cdot\sup_{\Gamma}|\phi|\quad(\text{by }\eqref{eq:errorbranch}\text{ for }f^n\text{ and each }\overline{C}),
\end{align*}
which together with
$|\Delta_{\Gamma}(\phi|\Gamma)|(\Gamma)=|\Delta\phi|(\sP^1)$
and $\sup_{\Gamma}|\phi|=\sup_{\sP^1}|\phi|$
yields \eqref{eq:quantitative}.
\qed

\begin{acknowledgement}
The author thanks the referee for a very careful scrutiny and invaluable comments.
This research was partially supported by JSPS Grant-in-Aid 
for Scientific Research (C), 15K04924 and 19K03541.
\end{acknowledgement}

\def\cprime{$'$}

\end{document}